 \newtheorem{theorem}{Theorem}
\newtheorem*{theorem*}{Theorem 1}
\newtheorem*{theorem**}{Theorem 2}
\newtheorem{lemma}{Lemma}[section]
\newtheorem{prop}{Proposition}
\newtheorem{remark}{Remark}
\newtheorem{cor}{Corollary}[section]
\newcommand{\Sp}{\mathbb{S}}
\numberwithin{equation}{section}
\title{Simple closed geodesics on regular tetrahedra in spherical space}
 \author{Alexander ~A.~Borisenko, ~Darya ~D.~Sukhorebska\footnote{
 The second author is supported by IMU Breakout Graduate Fellowship}}
\begin{document}
\date{}
\maketitle

{\it Abstract}.
On a regular tetrahedron in spherical space
there exist the finite number of simple closed geodesics.
For any pair of coprime integers $(p,q)$ 
it was  found the numbers $\alpha_1$ and $\alpha_2$  depending on $p$, $q$
and satisfying the inequalities $\pi/3< \alpha_1 < \alpha_2 < 2\pi/3$ such that 
 on a regular tetrahedron in spherical space 
 with the faces angle  $\alpha \in \left( \pi/3, \alpha_1 \right)$ 
 there exists unique, up to the rigid motion of the tetrahedron, 
 simple closed geodesic of type $(p,q)$, and
  on a regular tetrahedron  with the faces angle $\alpha \in \left( \alpha_2, 2\pi/3 \right)$
   there is no simple closed geodesic of type $(p,q)$.

{\it Keywords}: closed geodesic, regular tetrahedron, spherical space.
  
{\it MSC}: 53С22, 52B10


\section{Introduction}\label{s1}
 
Working on the three-body problem, Poincare conjectured the existing of a simple
 (without points of self-intersection) closed geodesic  
 on a smooth closed convex surface in three-dimensional Euclidean space.
 In 1929  Lyusternik and  Shnirelman proved that
 on a  Riemannian manifold homeomorphic to a sphere there exist
 at least three simple closed geodesics~(see~\cite{LustShnir},~\cite{LustShnir47}).

In 1898 Hadamard showed that on a closed surface of negative curvature
any closed curve, that is not homotopic to zero, could be  deformed into the convex curve
of minimal length within  its free homotopy group. 
 This minimal curve is unique  and it is  a closed geodesic  (see~\cite{Hadamard}).
An interesting problem is to find the asymptotic behavior of the number of simple closed
geodesics, depending on the length of these geodesics, on a compact manifold of negative curvature
For instance,  Huber proved that on a complete closed two-dimensional manifold of constant
negative curvature the number of closed geodesics of length at most $L$ has the order of growth 
 $e^{L}/L$ as $L \rightarrow +\infty$ (see. \cite{Huber59}, \cite{Huber61}). 
 In Rivin's work~\cite{Rivin}, and later in Mirzakhani's work~\cite{Mirz08},  it's proved that 
 on a surface of a constant negative curvature of genus $g$ and with $n$ cusps  (points at infinity)
the number of simple closed geodesics of length at most $L$ 
is   asymptotic  to (positive) constant times $L^{6g-6+2n}$ as $L \rightarrow +\infty$. 

The substantive results about the behavior of geodesic lines on 
 a convex two-dimensional surface was found by Cohn-Vossen~ \cite{Kon-Fosen},
 Alexandrov~ \cite{Alek50}, Pogorelov~\cite{Pogor69}.
In one of the earliest work   Pogorelov  proved that
a geodesic of length $\le \pi / \sqrt{k}$ realized the shortest path between its endpoint 
 on a closed  convex surface of the Gaussian curvature   $\le k$~ \cite{Pog46}.
Toponogov proved that on  $C^2$-regular closed surface of curvature $\ge k >0$ the length 
of a simple closed geodesic  is $\le 2\pi / \sqrt{k}$~\cite{Toponog63}.
  Vaigant and Matukevich obtained that on this surface 
a geodesic of length   $\ge  3\pi / \sqrt{k}$ has point of self-intersection~\cite{VagMatuc}.

Geodesics have also been studied on non-smooth surfaces, including convex polyhedra
 (see~\cite{Cot05} and~\cite{Law10}).
D. Fuchs and E. Fuchs supplemented  and systematized the results on closed geodesics on
regular polyhedra in three-dimensional Euclidean space (see~\cite{FucFuc07} and~\cite{Fuc09}).
 Protasov obtained a condition for the existence of simple closed geodesics 
 on a tetrahedron in Euclidean space
  and evaluated the number of these geodesics
in terms of the difference between $\pi$
and the sum of the angles at a vertex of the tetrahedron~\cite{Pro07}.
 
A simple closed geodesic is said to be \textit{of type $(p, q)$} if
it has $p$ vertices on each of two opposite edges of the tetrahedron, $q$ vertices
on each of other two opposite edges, and $p + q$ vertices on each of the remaining
two opposite edges.
Geodesics are called  equivalent  if they intersect the same edges of the tetrahedron
in the same order.

On a regular tetrahedron in Euclidean space, for each ordered pair of
coprime integers $(p, q)$ there exists a  class of equivalent simple closed geodesics of type $(p,q)$, 
up to the isometry of the tetrahedron. 
Each of these classes contains an infinity many  geodesics. 
Furthermore, into the class there is a   simple close geodesic passing through the midpoints of two pairs
of opposite edges of the tetrahedron.

In~ \cite{BorSuh} we studied simple closed geodesics on a regular tetrahedra
 in Lobachevsky (hyperbolic) three-dimensional space. 
In Euclidean space, the faces of a tetrahedron have zero
Gaussian curvature, and the curvature of a tetrahedron is concentrated only at its
vertices. 
In Lobachevsky space, the Gaussian curvature of  faces is -1, then the
curvature of a tetrahedron is determined not only by its vertices, but also by its faces.
Moreover, in hyperbolic space   the value $\alpha$ of faces angle satisfies  $0< \alpha < \pi/3$.
The intrinsic geometry of such tetrahedron depends on the value of its faces angle.
It follows that the behavior of closed geodesics on
a regular tetrahedron in Lobachevsky space differs from the Euclidean case. 

It is proved that on a regular tetrahedron in hyperbolic space 
for any coprime integers $(p, q)$, $0 \le p<q$,  
there exists unique, up to the rigid motion of the tetrahedron, simple closed geodesic of type $(p,q)$, 
and it   passes through the midpoints of two pairs of opposite edges of the tetrahedron.
These geodesics exhaust all simple closed geodesics on a regular tetrahedron in hyperbolic space.
 The number of simple closed geodesics of length bounded by $L$ is asymptotic to constant
  (depending on $\alpha$) times $L^2$, when $L$ tends to infinity \cite{BorSuh}.

  In this work we considered simple closed geodesics on a regular
    tetrahedron in spherical three-dimensional space. 
  In this space  the  curvature of a face equals 1, then the
curvature of a tetrahedron is also determined   by its vertices and  faces.
  The intrinsic geometry of a tetrahedron depends on the value $\alpha$ of its faces angle, 
  where $\alpha$ satisfies   $\pi/3 < \alpha \le 2\pi/3$.
If $\alpha=2\pi/3$, then the tetrahedron coincides with the unit  two-dimensional sphere.
 Hence there are infinitely many simple closed geodesics on it 
 and they are great circles of the sphere. 
  
  \textit{On a regular tetrahedron in spherical space there exists the finite number of simple closed geodesics. 
The length of all these geodesics is less than $2\pi$.}
 \par  \textit{For any coprime integers $(p, q)$  we presented the numbers $\alpha_1$ and $\alpha_2$,
  depending on  $p$, $q$ and satisfying the inequalities $\pi/3< \alpha_1 < \alpha_2 < 2\pi/3$, such that \\
1) if $\pi/3< \alpha <\alpha_1$, then 
on a regular tetrahedron in spherical space with the faces angle  $\alpha$ 
 there exists unique simple closed geodesic of type $(p,q)$, up to the rigid motion of this tetrahedron; \\
2)  if  $\alpha_2 < \alpha < 2\pi/3$, then 
on a regular tetrahedron  with the faces angle   $\alpha$  there is not simple closed geodesic of type $(p,q)$.}

\section{Definitions}\label{construction}

A \textit{geodesic} is  locally the shortest curve.
On a convex polyhedron, a geodesic has the following properties (see~\cite{Alek50}):\\
1) it consists of line segments on faces of the polyhedron;\\
2) it forms equal angles with edges of adjacent faces;\\
3) a geodesic cannot pass through a vertex of a convex polyhedron.

Note, that by the `line segment’
we mean  a geodesic segment in a space of constant curvature, where the polyhedron lies in.

Let us take two tetrahedra in the spaces of constant curvature 
and consider a closed geodesic on each of them.
Construct a  bijection between the vertices of the tetrahedra
and give the same labels to the corresponding vertices.
Hence   closed geodesics on these  tetrahedra is called \textit{equivalent}
 if they intersect the same-labeling edges in the same order~ \cite{Pro07}.

Fix the point of a geodesic on a tetrahedron's edge and roll the
tetrahedron along the plane in such way that the geodesic always touches the plane.
The traces of the faces form the \textit{development} of the tetrahedron on a plane
 and the geodesic is a line segment inside the development.

A \textit{spherical triangle} is a convex polygon on a unit sphere bounded by three the shortest lines. 
A \textit{regular tetrahedron} in three-dimensional  spherical space  $\Sp^3$ is a closed convex polyhedron
such that all its faces are regular spherical triangles and all its vertices are regular trihedral angles.
The value  $\alpha$ of its faces angle satisfies the conditions  $\pi/3 < \alpha \le 2\pi/3$.
Note, than there exist a unique (up to the rigid motion) tetrahedron  in spherical space with a given value of a faces angle.
The edge length is equal 
\begin{equation}\label{a}
a =\text{arccos} \left(  \frac{\cos\alpha}{1-\cos\alpha}  \right), 
\end{equation}
\begin{equation} \label{alim}
\lim\limits_{\alpha\to\frac{\pi}{3} } a = 0; \;\;\; 
\lim\limits_{\alpha\to\frac{\pi}{2} } a = \frac{\pi}{2}; \;\;\;
\lim\limits_{\alpha\to\frac{2\pi}{3} }a = \pi - \text{arccos}\frac{1}{3}.
\end{equation}

If $\alpha=2\pi/3$, then a tetrahedron coincides with a unit  two-dimensional sphere.
 Hence there are infinitely many simple closed geodesics on it.
In the following we consider  $\alpha$ that   $\pi/3 < \alpha < 2\pi/3$.

\section{Closed geodesics on   regular tetrahedra in Euclidean space}

Consider a  regular tetrahedron $A_1A_2A_3A_4$ in Euclidean space with the edge of length $1$.
A development of the tetrahedron is a part of the standard triangulation of the Euclidean plane.
Denote the vertices of the triangulation in accordance with the vertices of the tetrahedron.
Choose two identically oriented edges   $A_1A_2$ of the triangulation, which don't belong to the same line.
 Take two points $X$ and $X'$ at equal distances from the vertex $A_1$ such that the segment  $XX'$ doesn't
 contain any vertex of the  triangulation.
Hence the segment $XX'$ corresponds to the closed geodesic on the tetrahedron $A_1A_2A_3A_4$.
 Any closed geodesic on a regular tetrahedron in Euclidean space can be constructed in this way  
 (see  Figure~\ref{evclid_case}).
 
 Note, that the segments of geodesics lying on the same face of the tetrahedron are parallel to each other.
 It follows that any  closed geodesic on a regular tetrahedron in Euclidean space
  does not have points of self-intersection.
 
 \begin{figure}[h]
\begin{center}
\includegraphics[width=140mm]{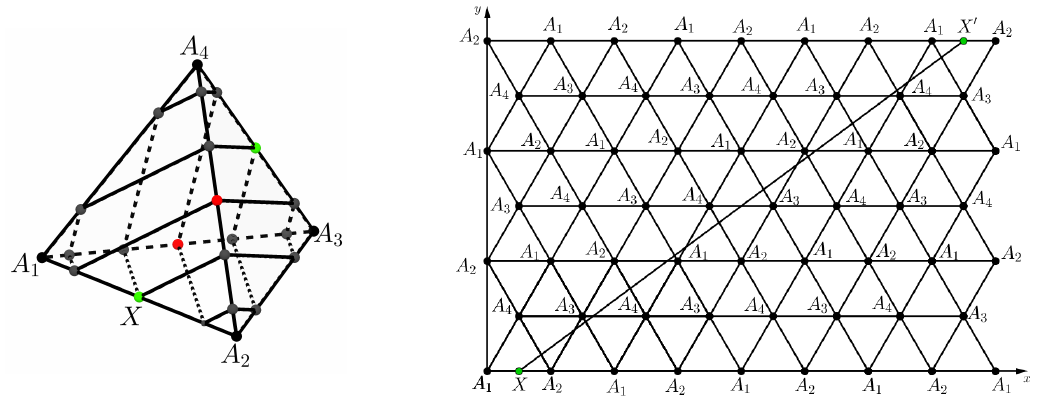}
\caption{ }
\label{evclid_case}
\end{center}
\end{figure}

We introduce a rectangular Cartesian coordinate system with the origin at $A_1$ 
and the $x$-axis along the edge  $A_1A_2$ containing $X$.
Then the vertices $A_1$ and $A_2$ has the coordinates  $\left( l, k\sqrt{3} \right)$,
and the coordinates of $A_3$ and $A_4$ are $\left( l+1/2, k\sqrt{3} +1/2 \right)$,
where  $k, l $ are integers.
The coordinates of  $X$ and $X'$ equal $(\mu, 0)$ and $(\mu+q+2p, q\sqrt3)$, where $0<\mu<1$.
The segment $XX'$ corresponds to the  simple closed geodesic  $\gamma$ of type $(p,q)$
on a regular tetrahedron in Euclidean space.
If $(p,q)$ are coprime  integers  then  $\gamma$ does not repeat itself.
The length of  $\gamma$ is equal 
\begin{equation}\label{length_evcl_geod}
L = 2 \sqrt{p^2+pq+q^2}.
\end{equation}

Note, that  for each coprime integers $(p, q)$  there exist infinitely many simple closed geodesics of type $(p, q)$,
and all of them are parallel in the development and intersect the tetrahedron's edges in the same order.

If $q=0$ and $p=1$, then geodesic  consists of four segments  
 that consecutively intersect four edges of the tetrahedron,
  and doesn't go through the one pair of  opposite edges.

\begin{prop}\label{centreqv}\textnormal{(see~\cite{BorSuh})}
For each pair of coprime integers  $(p, q)$ there exists a  simple closed geodesic 
 intersecting the midpoints of two pairs of opposite edges 
 of the regular tetrahedron in Euclidean space.
 \end{prop}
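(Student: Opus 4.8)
The plan is to take, inside the one-parameter family of type $(p,q)$ geodesics produced by the construction of $\gamma$ as a segment $XX'$ in the development, the member for which $X$ is the midpoint of an edge, and to show that it then automatically meets the midpoints of three further edges. First I would normalize the data: since $p$ and $q$ are coprime they are not both even, and since type $(p,q)$ and type $(q,p)$ describe the same class of geodesics, I may assume that the integer $q$ occurring in the construction — the one measuring the vertical displacement $q\sqrt3$ of $X'$ above $X$ — is odd (for the pair $(1,0)$ this means using the description with $(p,q)=(0,1)$). With this normalization I would set $\mu=\tfrac12$, so that $X=(\tfrac12,0)$ is the midpoint of the horizontal edge $[(0,0),(1,0)]$ of the development. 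An elementary divisibility argument shows that, for $q$ odd, the segment from $(\tfrac12,0)$ to $(\tfrac12+q+2p,\,q\sqrt3)$ avoids every vertex of the triangulation, so it does represent a closed geodesic $\gamma$ of type $(p,q)$, automatically simple in the Euclidean case.

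Next I would read the midpoints off the development. Writing the chord of $\gamma$ as $X+t(X'-X)$ with $t\in[0,1]$, the four points obtained at $t=0,\tfrac14,\tfrac12,\tfrac34$ all have the coordinates of an edge midpoint of the triangulation: for $q$ odd, the points at $t=0$ and $t=\tfrac12$ lie on horizontal edges and the points at $t=\tfrac14,\tfrac34$ on slanted edges. In the $4$-colouring of the triangular lattice by the labels $A_1,A_2,A_3,A_4$, with $A_1,A_2$ at the ``even'' heights and $A_3,A_4$ at the ``odd'' ones, a horizontal edge joins two vertices of a single colour pair, $\{A_1,A_2\}$ or $\{A_3,A_4\}$, whereas a slanted edge joins an $A_1/A_2$-vertex to an $A_3/A_4$-vertex. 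Hence the points at $t=0$ and $t=\tfrac12$ project to the midpoints $P_{12}$ and $P_{34}$ of the opposite edges $A_1A_2,A_3A_4$, and the point at $t=\tfrac14$ projects to the midpoint $P_{ij}$ of some edge $A_iA_j$ with $i\in\{1,2\}$ and $j\in\{3,4\}$, that is, of an edge belonging to one of the two remaining opposite pairs.

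It remains to promote ``one midpoint of an opposite pair'' to ``both midpoints''. Let $A_kA_l$ denote the edge opposite to $A_iA_j$, and let $R$ be the rotation of $\mathbb{R}^3$ by $\pi$ about the line through $P_{ij}$ and $P_{kl}$; it is a symmetry of the regular tetrahedron whose only fixed points on the surface are $P_{ij}$ and $P_{kl}$. Unfolding into one plane the two faces that meet along $A_iA_j$ exhibits a neighbourhood of $P_{ij}$ as flat, with $A_iA_j$ as a diameter; there $R$ reverses that diameter and interchanges the two faces, hence acts as the rotation by $\pi$ of the tangent plane. Therefore $R$ sends the geodesic through $P_{ij}$ with velocity $v$ to the one with velocity $-v$, which is the same geodesic traced backwards: parametrising $\gamma$ by arclength with $\gamma(0)=P_{ij}$ and writing $L$ for its length, $R(\gamma(t))=\gamma(-t)$. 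Taking $t=L/2$ gives $R(\gamma(L/2))=\gamma(L/2)$, so $\gamma(L/2)\in\{P_{ij},P_{kl}\}$; since $\gamma$ is simple, $\gamma(L/2)\ne P_{ij}=\gamma(0)$, and therefore $\gamma(L/2)=P_{kl}$. Running the same argument at $P_{12}$ shows that $\gamma$ also contains $P_{34}$. Thus $\gamma$ passes through $P_{12},P_{34}$ and $P_{ij},P_{kl}$, i.e.\ through the midpoints of two pairs of opposite edges.

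The step I expect to be most delicate is the normalization together with the non-degeneracy check in the first paragraph: for the ``wrong'' parity of $q$ the chord joining the two midpoints of one opposite pair actually runs through a vertex and so is not a geodesic, which is exactly why the relabelling $p\leftrightarrow q$ is needed. The remaining ingredients — the divisibility computation, the bookkeeping with the $4$-colouring that identifies which edge each of the points at $t=0,\tfrac14,\tfrac12$ lies on, and the observation that a non-trivial involution of a circle possessing a fixed point has precisely two of them — are routine.
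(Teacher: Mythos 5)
Your argument is correct and, unlike the paper itself, self-contained: the paper states Proposition~\ref{centreqv} with only a citation to~\cite{BorSuh}, so there is no in-text proof to match against. Your two main ingredients both check out. The normalization and non-degeneracy step is exactly right: a lattice vertex $me_1+ne_2=(m+n/2,\,n\sqrt3/2)$ lies on the chord from $(1/2,0)$ to $(1/2+q+2p,\,q\sqrt3)$ only if $q=2(mq-np)$, impossible for $q$ odd (and genuinely violated for $q$ even, e.g.\ $(p,q)=(1,2)$ hits the vertex $e_1+e_2$ at $t=1/4$), so insisting that the odd member of the coprime pair play the role of $q$ is both necessary and sufficient. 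The identification of the points at $t=0,\tfrac14,\tfrac12$ as midpoints of an $A_1A_2$, a slanted, and an $A_3A_4$ edge respectively is a correct computation ($x\equiv \tfrac14 \pmod{\tfrac12}$ at height $q\sqrt3/4$ characterizes slanted-edge midpoints), and the involution argument is clean: the $\pi$-rotation about the axis through $P_{ij},P_{kl}$ has exactly those two fixed points on the surface, acts as $-\mathrm{id}$ on the tangent disk at $P_{ij}$ (a reflection is excluded because its fixed set would be one-dimensional), hence conjugates $\gamma(t)$ to $\gamma(-t)$ and forces $\gamma(L/2)=P_{kl}$ by simplicity. Two minor remarks: the final application of the symmetry argument at $P_{12}$ is redundant, since $t=\tfrac12$ already lands on the midpoint of $A_3A_4$ directly; and it would be worth writing out the one-line divisibility computation rather than asserting it. Your route (explicit coordinates plus the order-two tetrahedral symmetry) is a perfectly good alternative to the development-symmetry argument of Proposition~\ref{corparts2} on which~\cite{BorSuh} relies.
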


\begin{prop}\label{corparts2}\textnormal{(see~\cite{BorSuh}) }
The development of the tetrahedron obtained by unrolling along
a closed geodesic consists of four equal polygons, and any two adjacent polygons can
be transformed into each other by a rotation through an angle $\pi$  around the midpoint
of their common edge.
\end{prop}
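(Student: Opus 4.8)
The plan is to exploit the symmetry group of the regular tetrahedron, concretely the subgroup $V_4$ consisting of the identity together with the three half-turns (rotations by $\pi$) about the three axes joining the midpoints of pairs of opposite edges. I would work with the geodesic produced by Proposition~\ref{centreqv}, which passes through the midpoints of two pairs of opposite edges, hence through four edge-midpoints $M_1,M_2,M_3,M_4$. These four points divide the closed geodesic $\gamma$ into four arcs, and the assertion is precisely that the traces of these arcs in the development are the four equal polygons, with consecutive arcs meeting at a marked midpoint.

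First I would fix the unrolling as a composition of planar isometries: rolling the tetrahedron across the consecutive edges crossed by $\gamma$ places the triangular faces into the standard triangular lattice of the plane, with $\gamma$ straightening to the segment $XX'$ of Section~3. A closed geodesic of type $(p,q)$ meets the edges $4(p+q)$ times in total (by the definition of type: $2p+2q+2(p+q)$ crossings), so the development is a chain of $4(p+q)$ lattice triangles. The four midpoints $M_1,\dots,M_4$ appear as four marked points along $XX'$, splitting the chain into four consecutive sub-chains of equal length $p+q$, which will be the four polygons.

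Next I would show that each half-turn $\sigma\in V_4$ about an axis through two of the midpoints is realized on the development as a planar rotation by $\pi$. On the tetrahedron $\sigma$ is an isometry fixing the two midpoints, reversing the direction of $\gamma$ at each of them, and interchanging the two faces adjacent to each corresponding edge; hence it maps $\gamma$ onto itself and carries each arc between consecutive $M_i$ onto another such arc. Transporting this through the rolling map, $\sigma$ becomes a planar half-turn about the lattice point $M_i$ lying on the common edge of two adjacent polygons, and since $\sigma$ merely permutes the faces of the tetrahedron, this half-turn sends the development onto itself while interchanging the two adjacent sub-developments. The two generating involutions of $V_4$ act transitively on the four arcs, so all four polygons are mutually congruent, and each adjacent pair is related by the half-turn about the midpoint of their common edge, which is exactly the claim.

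The main obstacle I anticipate is this last step: verifying cleanly that the intrinsic symmetry $\sigma$ commutes with the unrolling in the correct sense, so that it descends to a genuine planar half-turn about $M_i$ carrying the entire development (not merely the segment $XX'$) onto itself. Concretely this means checking that the sequence of rotations defining the rolling is conjugated by $\sigma$ into the reversed sequence based at the image edge, which I would establish by induction along the chain of faces; the base step is the elementary fact that in the standard triangular lattice two triangles sharing an edge are already interchanged by the half-turn about that edge's midpoint. A fully computational alternative is to substitute the explicit coordinates of $X$ and $X'$ from Section~3 and verify directly that the half-turns about the four marked lattice points preserve the lattice region, but the synthetic argument above is cleaner and makes the role of $V_4$ transparent.
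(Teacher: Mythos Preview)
The paper does not prove this proposition; it is quoted from the authors' earlier work~\cite{BorSuh} and stated without argument, so there is no in-paper proof to compare against.

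Your strategy is the natural one and is essentially what underlies the cited result: use the Klein four-group $V_4\subset\mathrm{Isom}(T)$ generated by the half-turns about the midpoint axes, pick the representative geodesic of Proposition~\ref{centreqv} through the four midpoints $M_1,\dots,M_4$, and observe that each generator of $V_4$ fixes two of the $M_i$, swaps the other two, and reverses $\gamma$; hence it interchanges the two arcs of $\gamma$ incident at a fixed $M_i$. Transitivity of $V_4$ on the four arcs gives the four congruent pieces, and the local action at $M_i$ gives the half-turn relating adjacent pieces. All of this is sound.

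The one point that deserves a line more than you give it is exactly the one you flag: why the intrinsic half-turn $\sigma$ becomes a \emph{planar} half-turn of the development about the image of $M_i$. The clean way to say it is that unrolling is a local isometry from the tetrahedron (minus vertices) to the plane, determined up to a global rigid motion once you fix the image of one face; since $\sigma$ fixes the point $M_i$ and swaps the two faces meeting along the edge through $M_i$, the two unrollings ``start at $M_i$ and roll forward along $\gamma$'' and ``apply $\sigma$, then start at $M_i$ and roll forward'' differ by a planar isometry fixing $M_i$ and reversing the initial direction of $\gamma$, i.e.\ by the half-turn at $M_i$. Your inductive formulation (conjugating the rolling sequence by $\sigma$) encodes the same thing; either version closes the argument. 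The computational alternative via the explicit lattice coordinates of Section~3 also works and is how one would most likely verify it in~\cite{BorSuh}.
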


 \begin{lemma}\label{evcl_dist_vertex_lem}
 Let $\gamma$ be a simple closed geodesic of type  $(p,q)$ on a regular tetrahedron in Euclidean space such that
 $\gamma$  intersects the midpoints of two pairs of opposite edges.
 Then the distance $h$ from the tetrahedron's vertices to $\gamma$ satisfies the inequality 
\begin{equation}\label{evcl_dist_vertex}
h \ge \frac{ \sqrt{3} }{ 4 \sqrt{p^2+pq+q^2} } . 
\end{equation}
\end{lemma}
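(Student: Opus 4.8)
The plan is to work entirely in the unrolled development of the tetrahedron along $\gamma$, which by Proposition~\ref{corparts2} consists of four equal polygons related by rotations of angle $\pi$ about the midpoints of common edges, with $\gamma$ appearing as a straight segment $XX'$. By Proposition~\ref{centreqv} we may assume $\gamma$ passes through the midpoints of two pairs of opposite edges; in the coordinate system introduced above, $X=(\mu,0)$ and $X'=(\mu+q+2p,\,q\sqrt3)$ with $\mu=1/2$ (the midpoint condition forces the endpoints to be midpoints of edges $A_1A_2$). Thus $\gamma$ lies on the line $\ell$ through $(1/2,0)$ with direction vector $(q+2p,\,q\sqrt3)$, whose squared length is $4(p^2+pq+q^2)$ by equation~\eqref{length_evcl_geod}.

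First I would list which lattice points of the standard triangulation represent tetrahedron vertices lying near the strip swept out by $\gamma$: these are exactly the points $(l,k\sqrt3)$ and $(l+1/2,\,k\sqrt3+1/2)$ with $k,l\in\mathbb Z$, i.e. the vertices of the triangular lattice with fundamental translations $(1,0)$ and $(1/2,\sqrt3/2)$. The distance from $\gamma$ to a tetrahedron vertex equals the distance from $\ell$ to the corresponding lattice point. So the lemma reduces to a purely arithmetic statement: the distance from the line $\ell$ (through $(1/2,0)$, direction $(q+2p,q\sqrt3)$) to any point of the triangular lattice is at least $\sqrt3/\bigl(4\sqrt{p^2+pq+q^2}\bigr)$.

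Second, I would compute this distance explicitly. For a lattice point $P$, write the signed distance as the cross product of $(P - (1/2,0))$ with the unit direction of $\ell$. The direction, unnormalized, is $v=(q+2p,\,q\sqrt3)$ with $|v|=2\sqrt{p^2+pq+q^2}$. For $P=(l,k\sqrt3)$ the numerator of the cross product is $\bigl(l-\tfrac12\bigr)\cdot q\sqrt3 - k\sqrt3\cdot(q+2p) = \sqrt3\bigl((l-\tfrac12)q - k(q+2p)\bigr) = \sqrt3\bigl(lq - kq - 2kp - \tfrac q2\bigr)$; similarly for the offset lattice points $P=(l+\tfrac12,\,k\sqrt3+\tfrac12)$ one gets $\sqrt3\bigl(lq + 2kp + \tfrac q2\bigr)$ after simplification. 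In both cases the quantity inside the parentheses is a half-integer multiple of something; more precisely one checks it is always of the form $\tfrac{\sqrt3}{2}\cdot m$ with $m$ an odd integer, using that $\gcd(p,q)=1$ (if $m$ could be even, the combination $lq-k(q+2p)=m/2$ together with coprimality of $p,q$ would force $X'$ to coincide with a lattice point modulo the lattice, contradicting that $XX'$ contains no vertex and that $\gamma$ is simple of type $(p,q)$). Hence the numerator has absolute value at least $\sqrt3/2$, and dividing by $|v|=2\sqrt{p^2+pq+q^2}$ gives $h\ge \sqrt3/\bigl(4\sqrt{p^2+pq+q^2}\bigr)$.

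The main obstacle is the parity/coprimality bookkeeping in the middle step: verifying that the relevant integer linear form evaluated at lattice points is never an even multiple of $\sqrt3/2$, i.e. that the minimum genuinely equals $\sqrt3/2$ in the numerator rather than $0$. This is where the hypotheses that $(p,q)$ are coprime and that $\gamma$ is a \emph{simple} geodesic of type $(p,q)$ (so the endpoints are midpoints, shifting everything by the $1/2$ that produces odd numerators) both get used; getting the two families of lattice points (the "upright" and "inverted" triangle vertices) to satisfy the bound simultaneously requires treating them separately and is the only place a careful case analysis is needed. Everything else is the routine distance-from-a-line computation recorded above.
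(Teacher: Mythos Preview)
Your approach is genuinely different from the paper's and, once the parity step is handled correctly, gives a cleaner route to the bound. The paper does not compute distances from $\ell$ to lattice points directly. Instead it locates the intersection points of $\ell$ with the horizontal edges $A_1A_2$ (at $x_b=\frac{2(q+2p)k+q}{2q}$) and with the edges $A_3A_2$, argues that these intersection points lie at distance $\ge 1/(2q)$ and $\ge 1/(2p)$ respectively from the nearest vertex along the edge, and then converts these edge--distances into a perpendicular distance by computing the altitude from $A_2$ in the small triangle $B_1A_2B_2$ with $|A_2B_1|=1/(2q)$, $|A_2B_2|=1/(2p)$. That altitude is exactly $\frac{\sqrt{3}}{4\sqrt{p^2+pq+q^2}}$. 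Your direct lattice computation collapses these two stages into one cross--product formula, which is arguably more transparent.

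The one genuine gap in your write--up is the justification that $m$ is always odd. Your proposed reason (``if $m$ were even, $X'$ would coincide with a lattice point modulo the lattice'') is not right: $m$ even and nonzero does not force the line through a vertex. What actually makes $m$ odd is the hypothesis that $X$ is the midpoint of the edge $A_1A_2$, which forces the number of crossings on that edge to be odd; in the paper's coordinates this means $q$ is odd (if $q$ were even, take the starting midpoint on a $p$--edge or a $(p+q)$--edge instead and relabel). With $q$ odd, your numerators $q(2l-2k-1)-4kp$ and $q(2l-2k-1)-2p(2k+1)$ are each odd${}+{}$even, hence odd, and the bound follows immediately. You should also recompute the second family using the correct offset vertices $(l+\tfrac12,\,k\sqrt3+\tfrac{\sqrt3}{2})$ rather than the paper's misprinted $k\sqrt3+\tfrac12$; your stated simplification $\sqrt3(lq+2kp+\tfrac q2)$ does not match either version.
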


\begin{proof}
Let us take  a  regular tetrahedron $A_1A_2A_3A_4$ in Euclidean space with the   edge of length~$1$.
Suppose  $\gamma$ intersects the edge $A_1A_2$ at the midpoint $X$.
Consider the development of the tetrahedron along  $\gamma$  starting from the point  $X$ 
and introduce a  Cartesian coordinate system as described above.
The geodesic $\gamma$ is unrolled into the segment $XX'$ lying at the line  $y=\frac{q\sqrt3}{q+2p} (x-\frac{1}{2})$ 
 (see Figure \ref{evclid_case}).
The segment  $XX'$ intersect the edges  $A_1A_2$ at the points 
 $(x_b,y_b)=$$\left(  \frac{ 2(q+2p) k + q}{2q},k \sqrt{3} \right)$, where $k \le q$.
 Since $\gamma$ does not pass through a vertex of the tilling, then $x_b$ couldn't be an integer.
 Hence on the edge  $A_1A_2$ the distance from the vertices to the points of $\gamma$ is not less than $1/2q$.
 
In the same way on the edge  $A_3A_2$ the distance from the vertices to the points of
 $\gamma$ is not less than $1/2p$.
 
 \begin{figure}[h]
\begin{center}
\includegraphics[width=65mm]{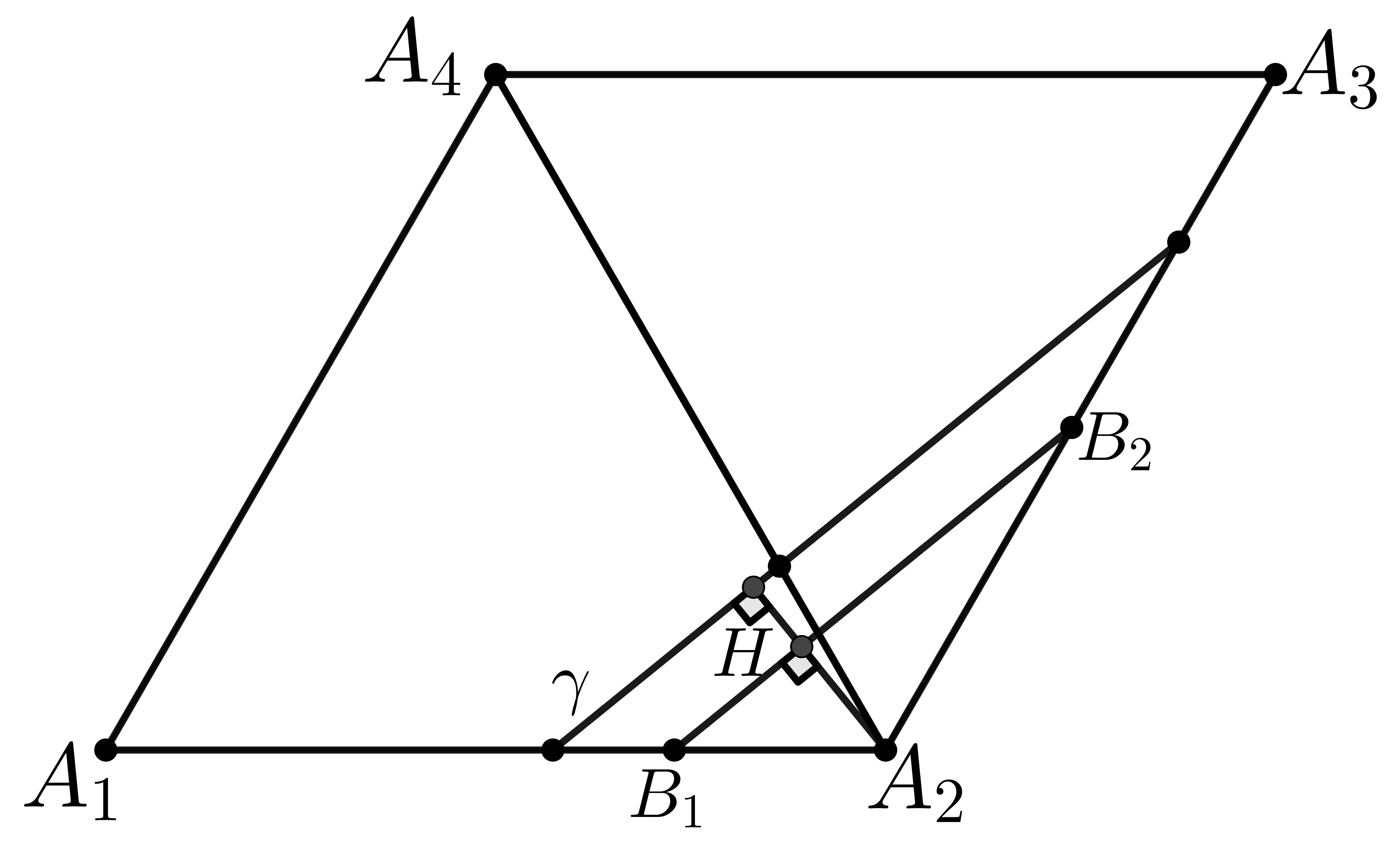}
\caption{ }
\label{A2B1B3}
\end{center}
\end{figure}

Choose the points $B_1$ at the edge $A_2A_1$ and $B_2$ at the edge $A_2A_3$ such that
 the length $A_2B_1$ is $1/2q$ and  the length $A_2B_2$ equals $1/2p$ (see Figure~\ref{A2B1B3}). 
The distance $h$ from the vertex $A_2$ to $\gamma$ is not less than the  height $A_2H$ of the triangle $B_1A_2B_2$.
The length of $B_1B_2$ equals $\frac{ \sqrt{p^2+pq+q^2}}{2pq}$.
Then the length of $A_2H$ is
\begin{equation}\label{C1B12}
|A_2H| = 
\frac{ \sqrt{3} }{ 4 \sqrt{p^2+pq+q^2} } . \notag
\end{equation}
Hence the inequality (\ref{evcl_dist_vertex}) is proved.
\end{proof}

Introduce some definitions following \cite{Pro07}.
A {\it broken line} on a tetrahedron is a curve 
 consisting of the   line segments, which connect points on the edges of this tetrahedron consecutively.
A {\it  generalized  geodesic} on a tetrahedron is a closed broken line with following properties:\\
(1)  it does not have points of self-intersection and 
adjacent segments of it lie on different faces; \\
(2) it crosses more than three edges on the tetrahedron and doesn't pass through tetrahedron's vertices.  
 
\begin{prop}\label{allgeod}
\textnormal{(V. Protasov~\cite{Pro07})}
For every  generalized  geodesic  on a tetrahedron  in   Euclidean space 
there exists a simple closed geodesic on a regular tetrahedron 
 in Euclidean space that is equivalent to this  generalized  geodesic.
\end{prop}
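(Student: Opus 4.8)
The plan is to pass from the generalized geodesic to its purely combinatorial data and then to realize that data by a genuine straight segment in an unrolling of the regular tetrahedron. Let $T$ be the tetrahedron carrying the generalized geodesic $\Gamma$, and give the vertices of the regular tetrahedron $T_0$ the corresponding labels. The curve $\Gamma$ crosses the edges of $T$ in some cyclic order $c_1,c_2,\dots,c_N$ (counted with multiplicity), and its consecutive segments lie on faces $\phi_1,\dots,\phi_N$ with $\phi_j\ne\phi_{j+1}$ and $c_j\subset\phi_j\cap\phi_{j+1}$. I would unroll $T_0$ along this same face sequence, obtaining a planar chain of unit equilateral triangles $\widetilde\phi_1,\dots,\widetilde\phi_{N+1}$, where $\widetilde\phi_{N+1}$ is a congruent copy of $\widetilde\phi_1$ under an isometry $g$ of the plane and the edges $c_j$ unroll to unit segments $\widetilde c_1,\dots,\widetilde c_{N+1}=g(\widetilde c_1)$; since $c_j$ and $c_{j+1}$ are two sides of the face $\phi_{j+1}$, the segments $\widetilde c_j$ and $\widetilde c_{j+1}$ meet at $60^\circ$ at a common vertex. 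In this picture a simple closed geodesic on $T_0$ equivalent to $\Gamma$ is exactly a straight segment running from a point $X$ of $\widetilde c_1$ to $g(X)$ on $\widetilde c_{N+1}$, crossing $\widetilde c_1,\dots,\widetilde c_N$ transversally in order and avoiding all triangle vertices; constructing such a segment is the whole task.

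The first substantial step is to pin down the combinatorics of the word $(c_j)$. Since $\Gamma$ is a simple closed curve on $\partial T\cong S^2$ it separates the four vertices into two groups, and I would show, using property (1) (no self-intersections, adjacent segments on different faces) together with property (2) (more than three edges crossed), that this partition must be $2+2$: a $0+4$ or $1+3$ split would confine $\Gamma$ to a region that is flat or contains a single cone point, where a taut non-self-intersecting broken line can cross at most three edges, contradicting property (2) — ruling out in passing the ``back-and-forth'' crossings (an edge crossed and immediately recrossed). A $2+2$ partition means the angular defect of $T_0$ enclosed by $\Gamma$ is $\pi+\pi=2\pi$, hence the holonomy $g$ of the unrolling is a \emph{translation} by some vector $\vec v$; in particular $g(X)=X+\vec v$, so any segment of the development in direction $\vec v$ automatically closes up smoothly when folded onto $T_0$. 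Counting crossings on each of the three pairs of opposite edges, and using that $\Gamma$ is a single closed curve, one then gets that $(c_j)$ is precisely a word of type $(p,q)$ with $\gcd(p,q)=1$.

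Once the word is known to be of type $(p,q)$, I would finish by the explicit construction given above: the segment with endpoints $(\mu,0)$ and $(\mu+q+2p,\,q\sqrt3)$ in the standard triangulation of the plane yields, for suitable $\mu\in(0,1)$, a simple closed geodesic of type $(p,q)$ on $T_0$ whose edge word coincides with $(c_j)$, so it is equivalent to $\Gamma$. Equivalently, in the development above the set of points $X$ on $\widetilde c_1$ for which the segment $[X,X+\vec v]$ stays in the chain and meets $\widetilde c_1,\dots,\widetilde c_N$ in order is open; its non-emptiness is seen by transporting the unrolled broken line $\widetilde\Gamma$ (which does cross the corresponding segments in the chain of $T$-shaped triangles) face by face to the equilateral chain and straightening it, using that in a chain of equilateral triangles glued along a spine with translational holonomy the forward visibility cone of $\widetilde c_1$ reaches $\widetilde c_{N+1}$. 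Choosing $X$ generically in this open set and folding $[X,X+\vec v]$ back onto $T_0$ gives the required geodesic, and it avoids the vertices by the same genericity argument used in Lemma~\ref{evcl_dist_vertex_lem}.

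The main obstacle is the combinatorial classification in the second step: proving that a closed, non-self-intersecting broken line crossing more than three edges of a tetrahedron must split the vertices $2+2$ and must cross the edges in a type-$(p,q)$ pattern. This is a careful case analysis of how a simple closed curve on $S^2$ can meet the $1$-skeleton of a tetrahedron — excluding loops around a single vertex and redundant recrossings — and everything after it (the holonomy being a translation, the existence of a suitable straight segment, and vertex avoidance) is comparatively routine planar geometry.
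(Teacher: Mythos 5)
First, a point of reference: the paper does not prove Proposition~\ref{allgeod} at all --- it is quoted from Protasov~\cite{Pro07} and used as a black box --- so there is no in-paper argument to measure yours against; your proposal has to stand on its own.

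Its overall architecture is the right one (extract the cyclic edge-crossing word, show it is a type-$(p,q)$ word, then realize that word by the straight segment from $(\mu,0)$ to $(\mu+q+2p,\,q\sqrt3)$ in the standard triangulation), and the second half --- translation holonomy from a $2\pi$ enclosed defect, closing up the segment, genericity to avoid vertices --- is routine and correct. The genuine gap is that the decisive step, the combinatorial classification of the word $(c_j)$, is announced rather than proved, and the one argument you do offer for it does not work as stated. You claim that a $1{+}3$ vertex split is impossible because ``a taut non-self-intersecting broken line'' in a region with one cone point crosses at most three edges; but a generalized geodesic is \emph{not} taut --- its segments satisfy no angle condition whatsoever --- so this criterion is inapplicable. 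Indeed, with the definition exactly as stated in this paper one can draw a simple closed broken line encircling a single vertex $A_1$ with a ``finger'' pushed across the opposite edge $A_2A_3$ and back (five segments, adjacent ones on different faces, four distinct edges crossed, no vertex met): excluding such configurations, proving the $2{+}2$ split, ruling out immediate recrossings, and showing that the surviving words are precisely the type-$(p,q)$ words in the correct cyclic order with $\gcd(p,q)=1$ is exactly the content of Protasov's theorem, and it is the part your proposal defers to ``a careful case analysis.'' Until that analysis is actually carried out (and it requires using the hypotheses on a generalized geodesic more carefully than tautness), the proof is not complete.
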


\section{Geodesics of type $(0,1)$ and $(1,1)$ on a  regular tetrahedron in $\Sp^3$}\label{exampl}

Let us remind  that a simple closed geodesic  $\gamma$ has  type $(p, q)$ if
it  has $p$ vertices on each of two opposite edges of the tetrahedron, $q$ vertices
on each of other two opposite edges, and $p + q$ vertices on each of the remaining
two opposite edges.
If $q=0$ and $p=1$, then geodesic  consists of four segments  
 that consecutively intersect   four edges of the tetrahedron,
  and doesn't go through the one pair of   opposite edges.

\begin{lemma}\label{geod01}
On a regular tetrahedron in spherical space there exist three different simple closed geodesics of type $(0,1)$.
They coincide under isometries of the tetrahedron. 
\end{lemma}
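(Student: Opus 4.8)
The plan is to exhibit geodesics of type $(0,1)$ explicitly as the analogues of the Euclidean ones passing through midpoints of edges, and then to identify which ones coincide under the symmetry group of the tetrahedron. Recall that a geodesic of type $(0,1)$ crosses exactly four edges of the tetrahedron, skipping one pair of opposite edges, and so it must pass through (the midpoints of) the remaining four edges, which form a ``belt''. A regular tetrahedron $A_1A_2A_3A_4$ has three pairs of opposite edges, namely $\{A_1A_2, A_3A_4\}$, $\{A_1A_3, A_2A_4\}$, $\{A_1A_4, A_2A_3\}$; choosing which pair is the skipped pair gives three candidate geodesics $\gamma_1,\gamma_2,\gamma_3$.

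First I would construct, for a fixed choice of skipped pair (say $\{A_1A_2,A_3A_4\}$), the curve $\gamma$ that passes through the midpoints of the four edges $A_1A_3$, $A_3A_2$, $A_2A_4$, $A_4A_1$, taken in this cyclic order. To see it is a genuine closed geodesic I would unroll the tetrahedron along this broken line onto the sphere $\Sp^2$: by the reflective symmetry of the regular spherical tetrahedron through the plane containing the edge $A_1A_2$ and the midpoint of $A_3A_4$ (an isometry fixing $A_1,A_2$ and swapping $A_3\leftrightarrow A_4$), each of the four segments has equal length and the incidence angles at consecutive edges match, so the development is a single spherical great-circle arc and $\gamma$ closes up smoothly. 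Concretely, the midpoint-to-midpoint segment on each face subtends equal angles with the two edges it meets because the spherical triangle is equilateral; this verifies property (2) of a geodesic on a convex polyhedron from Section~\ref{construction}. Simplicity is immediate: on each face $\gamma$ consists of a single segment, so there are no self-intersections. This gives three such geodesics $\gamma_1,\gamma_2,\gamma_3$, one per pair of opposite edges, and they are genuinely distinct since they cross different quadruples of edges.

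Next I would show that any two of $\gamma_1,\gamma_2,\gamma_3$ are carried one onto the other by an isometry of the tetrahedron. The symmetry group of the regular spherical tetrahedron is $S_4$ acting by permuting the vertices $A_1,\dots,A_4$; this action induces the full symmetric group on the set of three pairs of opposite edges (the standard surjection $S_4 \twoheadrightarrow S_3$). Hence for any $i,j$ there is a tetrahedral isometry sending the skipped pair of $\gamma_i$ to the skipped pair of $\gamma_j$, and since such a geodesic is uniquely determined by the quadruple of edges it meets (it is forced to pass through their midpoints by the equal-angle condition, exactly as in the Euclidean midpoint geodesic of Proposition~\ref{centreqv}), the isometry maps $\gamma_i$ to $\gamma_j$. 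Therefore the three geodesics coincide under isometries of the tetrahedron.

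The main obstacle I anticipate is justifying rigorously that these are the \emph{only} geodesics of type $(0,1)$, i.e.\ that a type-$(0,1)$ geodesic is forced through the edge midpoints. For this I would use the unrolling argument: a type-$(0,1)$ geodesic meets four edges forming a closed belt, and unrolling the tetrahedron along it produces a development of four equilateral spherical triangles glued in a strip, whose boundary arcs the geodesic segment must hit; the closure condition (the geodesic returns to its starting point with matching direction) together with the equal-angle property pins the crossing points symmetrically, forcing each to be a midpoint. One must be careful that on the sphere ``straight'' means along a great circle and that the development stays within a hemisphere so that the great-circle segment is genuinely minimizing locally — this is where the bound $\alpha<2\pi/3$ and the resulting edge-length estimate \eqref{alim} enter, guaranteeing the four-triangle strip is small enough to embed. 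Once that is settled the count of exactly three, all isometric, follows as above.
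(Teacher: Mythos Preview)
Your construction is correct and is essentially the same as the paper's: both join the midpoints of the four ``belt'' edges and use the congruence of the four corner triangles (equivalently, the tetrahedral symmetry) to check the equal-angle condition at each edge, then obtain the other two geodesics by choosing the other pairs of opposite edges. Your explicit use of the surjection $S_4\twoheadrightarrow S_3$ to prove the three are isometric is a nice addition that the paper leaves implicit.

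One remark: your ``main obstacle''---showing these are the \emph{only} geodesics of type $(0,1)$---is not part of this lemma. The statement asserts existence of three, not exactly three; uniqueness within a combinatorial type is established separately (Lemma~\ref{middle} and Corollary~\ref{uniqueness_cor}), so you need not worry about it here.
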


\begin{proof}
Consider a regular tetrahedron $A_1 A_2 A_3 A_4$ in $\Sp^3$ with the faces angle 
 $\alpha$ where $ \pi/3 <  \alpha < 2\pi/3 $. 
 Let $X_1$ and $X_2$ be the midpoints of  $A_1A_4$ and $A_3A_2$,
and $Y_1$, $Y_2$  be the midpoints of  $A_4A_2$ and $A_1A_3$ respectively.
Join these points   consecutively with the segments trough the faces. 
We obtain a closed broken line  $X_1Y_1X_2Y_2$  on the tetrahedron. 
Since the points  $X_1$, $Y_1$, $X_2$ and $Y_2$ are midpoints, then 
the triangles  $X_1A_4Y_1$, $Y_1A_2X_2$, $X_2A_3Y_2$ and $Y_2A_1X_1$ are equal.
It  follows that the broken line  $X_1Y_1X_2Y_2$  is a simple closed geodesic of type  $(0,1)$
on a regular tetrahedron in spherical space (see Figure~\ref{01geod}).
Choosing the midpoints of other pairs of opposite edges, we can construct other two geodesics of type $(0,1)$
on the tetrahedron.
\end{proof}
 \begin{figure}[h]
\begin{center}
\includegraphics[width=100mm]{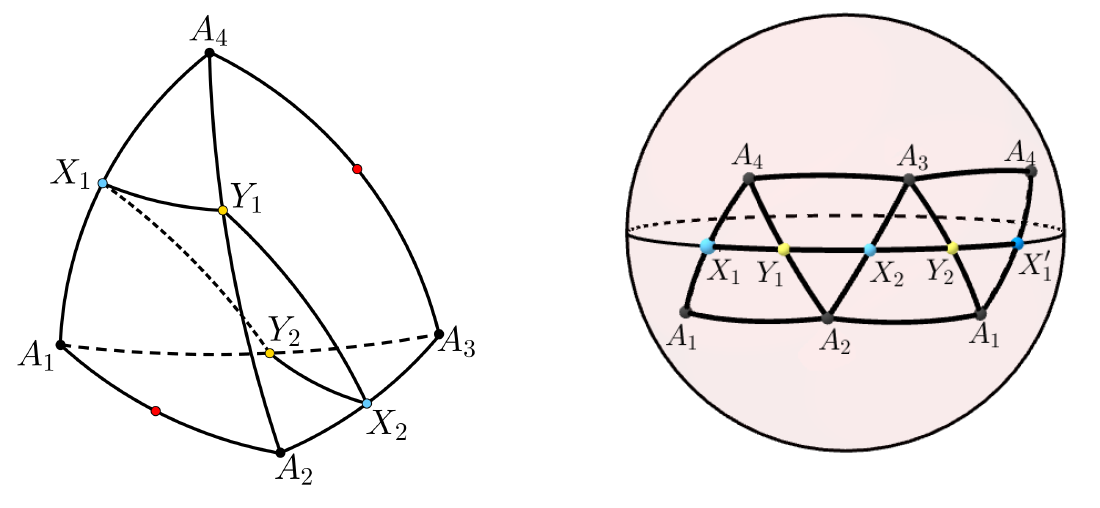}
\end{center}
\caption{Simple closed geodesic of type $(0,1)$ on a regular tetrahedron in  $\Sp^3$ }
\label{01geod}
\end{figure}

\begin{lemma}\label{geod11}
On a regular tetrahedron in spherical space with the faces angle   $\alpha < \pi/2$ 
there exist three simple closed geodesics of type $(1, 1)$.
\end{lemma}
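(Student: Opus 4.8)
The plan is to mimic the construction of Lemma~\ref{geod01}: exhibit an explicit closed broken line through midpoints of edges and verify it is a genuine geodesic, i.e. that at each edge crossing the two adjacent segments make equal angles with the edge. First I would fix a regular tetrahedron $A_1A_2A_3A_4$ in $\Sp^3$ with faces angle $\alpha$, $\pi/3 < \alpha < \pi/2$, and recall that a type $(1,1)$ geodesic has one vertex on each of two opposite edges, one vertex on each of two other opposite edges, and two vertices on each of the last pair of opposite edges; so the broken line meets six edges in eight points. Following the Euclidean picture (Proposition~\ref{centreqv} and the $(1,1)$ development in Figure~\ref{evclid_case}), the natural candidate passes through the midpoints of the two ``doubly-met'' pairs — wait, more carefully: in the Euclidean $(1,1)$ case the symmetric geodesic passes through midpoints of two pairs of opposite edges, and the remaining pair is crossed twice near, but not at, the midpoints. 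I would set up the development of the tetrahedron unrolled along such a candidate curve; by the analogue of Proposition~\ref{corparts2} this development should consist of four congruent spherical quadrilaterals, adjacent ones related by a rotation through $\pi$ about the midpoint of their common edge.

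The second step is to show that in $\Sp^3$ this symmetric configuration actually closes up as a geodesic. Here the key difference from Lemma~\ref{geod01} is that for type $(0,1)$ the midpoint symmetry alone forces the equal-angle condition (the four little triangles $X_iA_jY_i$ are congruent by reflection), whereas for type $(1,1)$ I expect to need the hypothesis $\alpha<\pi/2$ to guarantee that the straight spherical segment joining the prescribed midpoints stays inside the faces and crosses the intermediate edges transversally at points that are not vertices. Concretely, I would write down the candidate curve as a spherical geodesic segment $XX'$ in the unrolled development, impose that $X$ and $X'$ are the two copies of the same midpoint (so the curve closes), and use the rotational symmetry of the four-panel development to check that the angle condition at every interior edge crossing is automatically satisfied once it holds at one crossing. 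The symmetry group of the regular tetrahedron acting on the three choices of ``special pair of edges'' then yields the three claimed geodesics, pairwise isometric.

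The main obstacle will be the geometric estimate showing the segment does not leave the union of the faces — equivalently, that the eight crossing points are interior to their edges — and that this is exactly what fails when $\alpha \ge \pi/2$. I would reduce this to a one-variable inequality: express, via the spherical law of cosines in the developed quadrilaterals, the position of the two ``extra'' crossing points on the doubly-met edges as a function of $\alpha$, and show this position lies strictly between the endpoints precisely for $\alpha < \pi/2$, with the crossing point hitting a vertex (or the segment exiting a face) in the limit $\alpha \to \pi/2^-$. I expect the edge length formula~\eqref{a} and the limits~\eqref{alim} (in particular $a \to \pi/2$ as $\alpha \to \pi/2$) to be the reason $\pi/2$ is the threshold: when $a$ exceeds $\pi/2$ the relevant triangle inequalities on the sphere degenerate. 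If a fully explicit candidate proves unwieldy, an alternative is to invoke Proposition~\ref{allgeod}: construct a generalized geodesic of type $(1,1)$ on the spherical tetrahedron by hand (a symmetric octagonal broken line through the chosen midpoints), observe that for $\alpha<\pi/2$ it has no self-intersections and crosses six edges, and then deduce the existence of an equivalent honest closed geodesic — though one must be careful that Proposition~\ref{allgeod} as stated is for Euclidean space, so in the spherical setting this route requires re-deriving the continuity/degree argument that turns a generalized geodesic into a true one.
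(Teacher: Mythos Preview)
Your plan is sound and would lead to a correct proof, but it is considerably heavier than the paper's. The paper's argument is \emph{local} rather than global: instead of unrolling the full eight-triangle development and analysing a four-panel symmetric figure, it develops only two adjacent faces at a time. Between consecutive midpoints --- e.g.\ $X_1$ on $A_1A_4$ and $Y_1$ on $A_4A_2$ --- the developed two-face region is a spherical quadrilateral $A_1A_4A_2A_3$ with interior angle $2\alpha$ at $A_4$ (and at $A_3$); since $\alpha<\pi/2$ this quadrilateral is convex, so the geodesic segment $X_1Y_1$ automatically lies inside it, and by the isoceles symmetry it meets the intermediate edge $A_4A_3$ at a right angle. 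The four isoceles triangles $X_1A_4Y_1$, $Y_1A_2X_2$, $X_2A_3Y_2$, $Y_2A_1X_1$ (each with two sides $a/2$ and apex angle $2\alpha$) are congruent, which immediately gives the equal-angle condition at the midpoints and hence a genuine geodesic. No spherical law of cosines and no explicit computation of the extra crossing points is required.

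One small correction: the threshold $\alpha=\pi/2$ arises directly from the apex angle $2\alpha$ reaching $\pi$, at which point the segment $X_1Y_1$ passes through the vertex $A_4$; it is \emph{not} driven by the edge-length limit $a\to\pi/2$ in \eqref{alim}, which merely happens to occur at the same parameter value. Your alternative route via Proposition~\ref{allgeod} would indeed require a separate spherical argument and is best avoided here.
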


\begin{proof}
Consider a regular tetrahedron $A_1 A_2 A_3 A_4$ in $\Sp^3$ with the faces angle  
 $\alpha$ where  $ \pi/3 <  \alpha < \pi/2 $. 
As above, the points $X_1$ and $X_2$ are the  midpoints of  $A_1A_4$ and $A_3A_2$,
and $Y_1$, $Y_2$ are the midpoints of  $A_4A_2$ and $A_1A_3$.

Develop two adjacent faces  $A_1A_4A_3$ and $A_4A_3A_2$ into the plain and 
join the points $X_1$ and $Y_1$ with the line segment.
Since $\alpha < \pi/2$, then the segment  $X_1Y_1$ is contained inside the development
and intersects the edge  $A_4A_2$  at right angle.
Then develop another  two adjacent faces $A_4A_1A_2$ and $A_1A_2A_3$ 
and construct the segment $Y_1X_2$.
In the same way join the points  $X_2$ and $Y_2$ within the faces  $A_2A_3A_4$ and  $A_3A_4A_1$,
and join    $Y_2$ and $X_1$ within $A_1A_2A_3$ and $A_4A_1A_2$  (see Figure \ref{11geod}).
Since  the points  $X_1$, $Y_1$, $X_2$ and $Y_2$ are the midpoints, it follows, that the triangles 
$X_1A_4Y_1$, $Y_1A_2X_2$, $X_2A_3Y_2$ и $Y_2A_1X_1$  are equal.
Hence, the segments $X_1Y_1$, $Y_1X_2$, $X_2Y_2$, $Y_2X_1$ form a simple closed geodesic of type $(1,1)$
on the tetrahedron.

Another two geodesics of type $(1,1)$ on a tetrahedron could be constructed in the same way, 
if we choose the midpoints of other pairs of opposite edges.
\end{proof}

 \begin{figure}[h]
\begin{center}
\includegraphics[width=105mm]{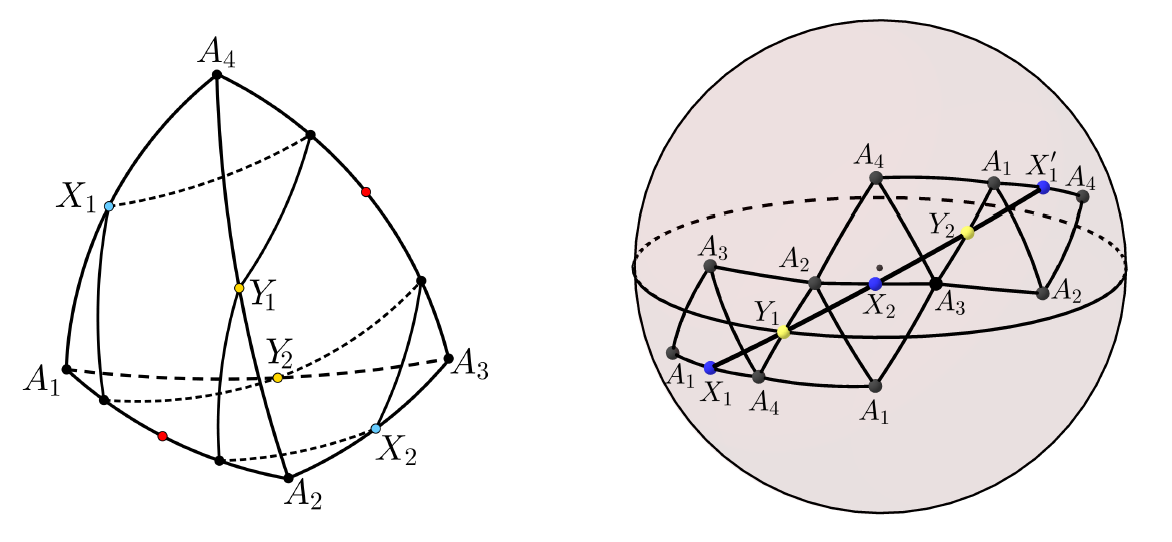}
\end{center}
\caption{Simple closed geodesic of type $(1,1)$ on a regular tetrahedron in  $\Sp^3$ }
\label{11geod}
\end{figure}
 
\begin{lemma}\label{morePi2}
On a regular tetrahedron  in spherical space  with the faces angle   $\alpha \ge \pi/2$ 
there exists only three simple closed geodesics and all of them have  type $(0,1)$.
\end{lemma}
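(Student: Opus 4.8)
The plan is to show that when $\alpha \ge \pi/2$ the edge length is already large enough that any geodesic crossing more than four edges would be forced to be too long or to self-intersect, so that the only possibilities are the three geodesics of type $(0,1)$ produced in Lemma~\ref{geod01} (and, when $\alpha \ge \pi/2$, the type-$(1,1)$ construction of Lemma~\ref{geod11} fails because the segment $X_1Y_1$ no longer stays inside the two-face development). First I would record the key metric fact: by~\eqref{a} and~\eqref{alim}, $\alpha \ge \pi/2$ forces $a \ge \pi/2$, and more generally $a$ is an increasing function of $\alpha$ on $(\pi/3, 2\pi/3)$. Then I would set up the development of the tetrahedron along a hypothetical simple closed geodesic $\gamma$ of type $(p,q)$ with $p+q \ge 2$, unrolled onto the unit sphere $\Sp^2$, so that $\gamma$ becomes an arc of a great circle meeting a chain of spherical triangles; by Proposition~\ref{corparts2} (whose proof carries over to the spherical setting) this development consists of four congruent pieces, each a chain of $p+q$ triangles.

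The main step is a length/angle estimate. In the spherical development, consecutive copies of a face are glued along edges of length $a \ge \pi/2$, and the geodesic segment inside each face subtends, together with the two cut edges, a spherical triangle whose sides are bounded below in terms of $a$. I would argue that a great-circle arc crossing $k = 4(p+q) \ge 8$ such edges must have length approaching or exceeding the total length $2\pi$ of a great circle — using the fact proved in the main theorem's statement that every simple closed geodesic has length $< 2\pi$, together with a lower bound on the "horizontal progress" contributed by each face crossing when $a \ge \pi/2$. Concretely, reflecting the strategy of Lemma~\ref{evcl_dist_vertex_lem} but on the sphere: each face contributes an arc whose projection onto the great-circle direction is at least some $c(a) > 0$, and when $a \ge \pi/2$ one gets $4(p+q)\,c(a) \ge 2\pi$ unless $p + q = 1$. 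The case $p=q=1$ (i.e. $p+q=2$) must be ruled out separately: here I would show directly, using $\alpha \ge \pi/2$, that the would-be geodesic $X_1Y_1X_2Y_2$ of type $(1,1)$ cannot be geodesic because at the edge $A_4A_2$ the broken line fails to make equal angles (the development of the two faces $A_1A_4A_3$ and $A_4A_3A_2$ is no longer convex enough to contain the straight segment $X_1Y_1$ when $\alpha \ge \pi/2$), and more robustly that any type-$(1,1)$ simple closed geodesic would have length $\ge 2\pi$ when $a \ge \pi/2$.

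Finally I would assemble the argument: Lemma~\ref{geod01} already gives three simple closed geodesics of type $(0,1)$, and they are pairwise related by isometries of the tetrahedron; the previous paragraphs show no simple closed geodesic of type $(p,q)$ with $p+q \ge 2$ exists when $\alpha \ge \pi/2$; and the case $q = 0$, $p \ge 2$ is excluded because such a geodesic would cross only four edges but would have to wind $p$ times, again giving length $\ge 2\pi$ for $a \ge \pi/2$. Hence the only simple closed geodesics are the three of type $(0,1)$, which is the assertion of the lemma.

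I expect the technical heart — and the main obstacle — to be the uniform lower bound $c(a)$ on the contribution of each face crossing to the length of $\gamma$ in the spherical development, i.e. proving that on the sphere a great-circle arc that successively crosses $N$ edges each of length $\ge \pi/2$ of a chain of congruent spherical triangles has length bounded below by a quantity that reaches $2\pi$ once $N \ge 8$. Unlike the Euclidean case, great circles are closed and the "straightening" argument must account for the curvature of $\Sp^2$ and for the possibility that the arc re-enters a region it has left; controlling this carefully, presumably via an explicit computation with the triangle inequality on $\Sp^2$ applied along the chain of cut edges, is where the real work lies.
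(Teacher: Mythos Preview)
Your approach is both much harder than needed and logically circular in the paper's structure. The $2\pi$ length bound you invoke is Lemma~\ref{length}, whose proof in the paper treats the range $\alpha\ge\pi/2$ precisely by citing Lemma~\ref{morePi2} and computing the length of the three $(0,1)$ geodesics directly. So you cannot use the $2\pi$ bound as input here without first giving an independent proof of it for $\alpha\ge\pi/2$, which you do not do. Moreover, the ``horizontal progress'' estimate $c(a)$ that you flag as the technical heart is left entirely unproven; you only describe what it ought to say.

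The paper's argument bypasses all of this with a one-line local observation. Unroll two adjacent faces sharing a vertex $A$; the angle at $A$ in the development is $2\alpha\ge\pi$. A geodesic is a straight arc in this development, and a straight arc entering through one outer edge of this two-face wedge and exiting through the other outer edge would have to pass on the far side of $A$, hence leave the wedge. Thus, when $\alpha\ge\pi/2$, a geodesic can never cross three edges emanating from the same vertex in succession. By Protasov's combinatorial description (cited as Proposition~\ref{allgeod} and the catching-node analysis), every simple closed geodesic of type $(p,q)$ with $p=q=1$ or $1\le p<q$ \emph{must} cross three such edges in succession; only type $(0,1)$ avoids this. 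Together with Lemma~\ref{geod01} this finishes the proof in a few lines, with no length estimates and no appeal to the global $2\pi$ bound.
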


\begin{proof}
Consider a regular  tetrahedron  in spherical space  with the faces angle    $\alpha \ge \pi/2$.
Since a geodesic is a line segment inside the development of the tetrahedron, 
then it cannot intersect three edges of the tetrahedron, coming out from the same vertex,  in succession. 

If a simple closed geodesic on the tetrahedron is of type $(p,q)$, where  $p=q=1$ or $1 < p<q$, 
then this geodesic intersect  three tetrahedron's edges, starting at the same vertex, in succession  (see~\cite{Pro07}). 
And only a simple closed geodesic of  type  $(0,1)$  intersects 
 two tetrahedron's edges, that have  a common vertex,
and doesn't  intersects  the third.
It follows that  on a regular tetrahedron in spherical space 
 with the faces angle  $ \alpha \in \left[ \pi/2, 2\pi/3 \right) $
 there exist only three simple closed geodesic of type  $(0,1)$ and others don't exist.
\end{proof}

In the next sections we will assume that    $\alpha$ satisfying   $\pi/3 < \alpha < \pi/2$.

\section{The length of a simple closed geodesic on a regular tetrahedron in $\Sp^3$}\label{seclength}

 \begin{lemma}\label{length}
 The length of a simple closed geodesic on a regular tetrahedron in spherical space is less than $2\pi$.
\end{lemma}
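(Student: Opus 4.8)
The plan is to use the comparison between spherical geometry and Euclidean geometry via the development of the tetrahedron along the geodesic, combined with a curvature argument. First I would recall that a simple closed geodesic $\gamma$ on a regular spherical tetrahedron is a straight segment in a development of the tetrahedron onto the sphere $\Sp^2$ (the faces of a spherical tetrahedron are spherical triangles, so unrolling them along $\gamma$ produces a chain of spherical triangles on the unit sphere, and $\gamma$ becomes an arc of a great circle). The endpoints $X$ and $X'$ of this developed arc correspond to the same point of $\gamma$ on a tetrahedron edge, and the development is obtained by rigid motions of the sphere. A great-circle arc that does not wind around can have length at most $2\pi$, but I need the strict bound and I must rule out the degenerate case where $\gamma$ coincides with a full great circle.

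The key step: I would show the developed images of the four faces that $\gamma$ passes through, laid out along the arc, cannot cover the whole great circle without overlapping, and in fact the total "angular width" swept is strictly less than $2\pi$. One clean way is to invoke Lemma~\ref{morePi2} together with Lemma~\ref{geod01}: for $\alpha \ge \pi/2$ only the three geodesics of type $(0,1)$ exist, and for those one computes the length directly from the development of the four congruent triangles $X_1A_4Y_1$, etc., each contributing a short great-circle arc; since each such triangle has all sides less than the edge length $a < \pi - \arccos(1/3) < \pi$ and the arc inside it is a median-type segment, the four pieces sum to something well below $2\pi$. For $\alpha < \pi/2$ I would instead bound the length by comparison: a geodesic of type $(p,q)$ on the spherical tetrahedron, developed onto $\Sp^2$, has the same combinatorial structure as the corresponding Euclidean geodesic developed onto the plane; using that the spherical edge length $a(\alpha)$ satisfies $a < \pi/2$ in this range (by~\eqref{alim} and monotonicity of~\eqref{a}) and that on the sphere the "straight" development of $4$ triangles with such small edges fits inside a hemisphere's worth of great circle, one gets length $< 2\pi$.

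An alternative and perhaps more robust approach, which I would actually prefer to present, is the following: cap off the spherical tetrahedron's faces — or rather, observe that the tetrahedron is a convex surface in $\Sp^3$ and apply a Toponogov-type length bound. A closed convex surface in $\Sp^3$ has intrinsic curvature $\ge 1$ in the sense of Alexandrov (the faces have curvature $1$, the edges and vertices contribute nonnegative curvature). By the spherical analogue of Toponogov's theorem on simple closed geodesics (the curvature $\ge 1$ comparison forces a simple closed geodesic to have length $\le 2\pi$, since on the model sphere $\Sp^2$ itself the simple closed geodesics are great circles of length exactly $2\pi$), any simple closed geodesic on the tetrahedron has length at most $2\pi$. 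Strictness then follows because equality in Toponogov's estimate would force the surface to be the round sphere, whereas our tetrahedron with $\alpha < 2\pi/3$ is not isometric to $\Sp^2$ (it has genuine vertices with positive point curvature).

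The hard part will be justifying the strict inequality rigorously: the Toponogov-type bound gives $\le 2\pi$ immediately, but the rigidity statement "equality implies round sphere" needs either a citation to a spherical Toponogov rigidity result or a hands-on argument. If I go the development route instead, the main obstacle is controlling the total angular span of the four developed faces on $\Sp^2$ and checking no overlap — this is a genuine spherical-trigonometry estimate that uses $\alpha < 2\pi/3$ essentially. I expect the cleanest writeup splits into the two cases already isolated by Lemmas~\ref{geod01}--\ref{morePi2}: for $\alpha \ge \pi/2$ the only geodesics are of type $(0,1)$ and a direct computation of their length (four congruent arcs, each shorter than $\pi/2$) gives a bound strictly below $2\pi$; for $\pi/3 < \alpha < \pi/2$ the development onto $\Sp^2$ of the $4(p+q)$ (really, of the relevant chain of) triangular pieces is a great-circle arc whose length equals the sum of the pieces, and the constraint that $\gamma$ is \emph{simple} (non-self-intersecting) is exactly what prevents the arc from completing a full revolution, yielding length $< 2\pi$.
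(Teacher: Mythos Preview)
Your Toponogov-style argument (intrinsic curvature $\ge 1$ in the Alexandrov sense, hence a simple closed geodesic has length $\le 2\pi$, with rigidity giving strict inequality since the tetrahedron with $\alpha<2\pi/3$ is not the round sphere) is a legitimate route, and in fact the paper explicitly acknowledges it in a Remark immediately following the lemma, citing a generalization of Toponogov's theorem to Alexandrov spaces. If you go that way you must supply a precise reference for the rigidity case.

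Your development-based alternative, however, has a genuine gap. When you unfold the tetrahedron along $\gamma$ onto $\Sp^2$, the geodesic becomes an arc of a great circle, but nothing you have written prevents that arc from having length $\ge 2\pi$: the development is only an \emph{immersed} chain of spherical triangles, and the developed arc can certainly wrap around the sphere even though $\gamma$ is simple on the tetrahedron. Simplicity on the polyhedron does not transfer to embeddedness of the developed arc, so the sentence ``the constraint that $\gamma$ is simple is exactly what prevents the arc from completing a full revolution'' is an assertion, not an argument.

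The paper's direct proof supplies the missing geometric idea. Using Protasov's structural result on catching nodes (Proposition~\ref{gengeodstr}), a geodesic of type $(p,q)$ runs between two catching nodes, and from each catching node two strands emanate that pass side by side through the same sequence of faces with no other points of $\gamma$ between them. One develops only this \emph{pair} of parallel arcs onto $\Sp^2$; together with the two catching triangles $B^1_1A_4B^2_1$ and $B^1_{pq}A_1B^2_{pq}$ at the ends they bound a \emph{convex} spherical hexagon $B^1_1A_4B^2_1B^2_{pq}A_1B^1_{pq}$. The triangle inequality applied at each catching triangle shows that the length of $\gamma$ is strictly less than the perimeter of this hexagon, and the perimeter of any convex polygon on the unit sphere is less than $2\pi$. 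The case $\alpha\ge\pi/2$ is handled separately by the explicit length formula for the $(0,1)$ geodesic, exactly as you suggest.
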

\begin{proof}
Consider a regular tetrahedron $A_1 A_2 A_3 A_4$ in $\Sp^3$ with the faces angle of value 
 $\alpha$, where  $ \pi/3 <  \alpha < \pi/2 $. 
A spherical space  $\Sp^3$ of curvature $1$ is realized as a unite tree-dimensional sphere 
in four-dimensional Euclidean space. 
Hence the tetrahedron  $A_1 A_2 A_3 A_4$ is situated in an open hemisphere.
Consider a tangent Euclidean space to this hemisphere at the center of the tetrahedron
(by `center of the tetrahedron' we mean a center of circumscribed sphere of the tetrahedron).
A central projection of the hemisphere to this tangent space maps the regular tetrahedron in spherical space to the 
regular tetrahedron in Euclidean space.
A simple closed geodesic  $\gamma$ on  $A_1A_2A_3A_4$ is mapped into  generalized  geodesic
on a regular Euclidean tetrahedron.
From   Proposition~\ref{allgeod} we know that  there exists a simple closed geodesic on a regular tetrahedron 
in  Euclidean space equivalent to this  generalized  geodesic.
It follows, that 
\textit{ a simple closed geodesic on a regular tetrahedron in $\Sp^3$ is also uniquely determined
 with the pair of coprime integers $(p,q)$ and has the same structure as a closed geodesic on a regular tetrahedron 
 in Euclidean space.}
 Investigate this structure following~\cite{Pro07}.
 
A vertex of a geodesic is called a \textit{catching node}  if it and two
adjacent vertices lie on the three edges coming out from the same   vertex $A_i$ of the
tetrahedron and are the  vertices of the  geodesic  nearest to  $A_i$ on these edges.
By the 'vertex of a geodesic' we mean a point of geodesic on an edge. 

\begin{prop}\label{gengeodstr} 
\textnormal{(V. Protasov~\cite{Pro07})}
Let $\gamma^1_1$ and $\gamma^2_1$ is a segments of a simple closed geodesic  $\gamma$,
 starting at a catching node on a regular tetrahedron,  
 $\gamma^1_2$ and $\gamma^2_2$ is the segments following  $\gamma^1_1$ and $\gamma^2_1$ and so on.
 Then for each  $i=2, \dots, 2p+2q-1$ the segments $\gamma^1_i$ and $\gamma^2_i$ 
 lie on the same face of the tetrahedron, and there are no other geodesic points between them. 
 The segments  $\gamma^1_{2p+2q}$ и $\gamma^2_{2p+2q}$ meet at the
second catching node of the geodesic.
\end{prop}
 \begin{figure}[h]
\begin{center}
\includegraphics[width=50mm]{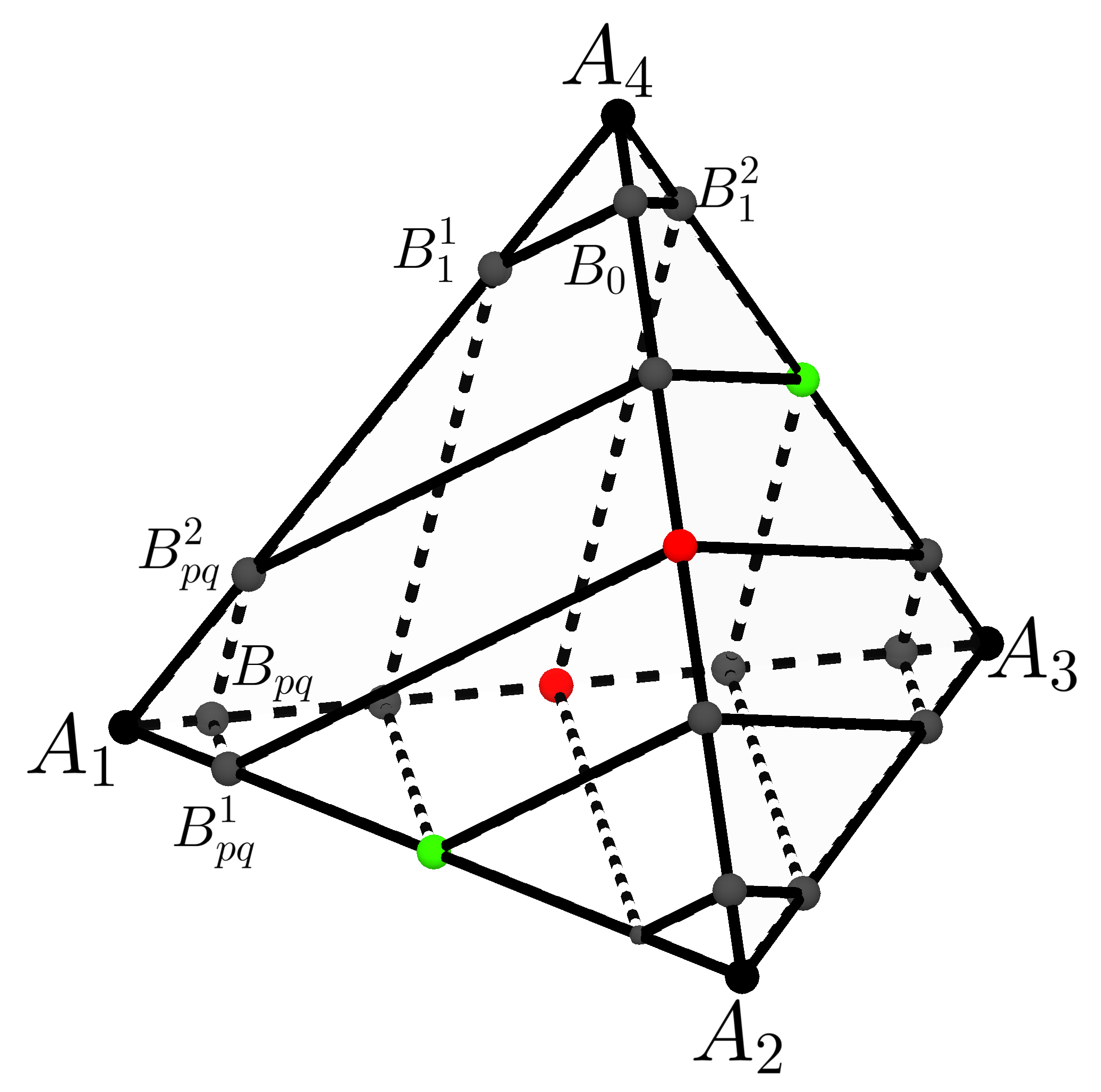}
\caption{ }
\label{geod(2,3)_zacep}
\end{center}
\end{figure}

Suppose  $\gamma$  has $q$  points on the edges $A_1A_2$ and $A_3A_4$,
$p $ points on $A_1A_4$ and $A_2A_3$, and $p+q$ points on $A_2A_4$ and $A_1A_3$.
Consider the catching node $B_0$ of  $\gamma$ at the edge $A_4A_2$.
The adjacent geodesic vertices $B^1_1$ and $ B^2_1$ are the nearest point of  $\gamma$ to $A_4$ 
at the edges $A_4A_1$ and $A_4A_3$ respectively.
The geodesic segments $B_0B^1_1$ and $B_0B^2_1$ correspond to  $\gamma^1_1$ and $\gamma^2_1$
(see Figure~\ref{geod(2,3)_zacep}). 
If we develop the faces  $A_1A_2A_4$ and $A_2A_4A_3$ into the plain,  then 
the segments   $\gamma^1_1$ and $\gamma^2_1$ will form one line segment. 
The triangle  $B^1_1A_4B^2_1$ at the development is called \textit{catching triangle} 
(see Figure~\ref{catching_triangle}). 

The segments  $\gamma^1_{2p+2q}$ and $\gamma^2_{2p+2q}$   meet at the 
second catching node  $B_{pq}$ of the geodesic.
We will assume, that  $B_{pq}$  is the nearest to $A_1$  geodesic vertex at the edge  $A_1A_3$.
The adjacent geodesic vertices  $B^1_{pq}$ and $B^2_{pq}$ are the nearest point of  $\gamma$ to $A_1$ at 
$A_1A_2$ and $A_1A_4$ respectively. 
They form the second catching triangle $B^1_{pq} A_1 B^2_{pq}$.

From these catching triangles $B^1_1A_4B^2_1$ and $B^1_{pq} A_1 B^2_{pq}$ it follows next inequalities
\begin{equation}\label{triangl_zacep}
|B^1_1 B^2_1|< |B^1_1A_4|+|A_4B^2_1|;  \;\;  |B^2_{pq}B^1_{pq}|< |B^2_{pq}A_1|+|A_1B^1_{pq}|.  
\end{equation}

Now let us develop the tetrahedron into two-dimensional sphere, 
starting from the face $A_1A_4A_3$ and go along the segments $\gamma^1_i$ and $\gamma^2_i$, 
$i=2, \dots, 2p+2q-1$.
The segments $\gamma^1_2$ and $\gamma^2_2$ start at the points   $B^1_1$ and $B^2_1$ respectively, then they
  intersect the edge  $A_1A_3$. 
  Other segments $\gamma^1_i$ and $\gamma^2_i$, $i=3, \dots, 2p+2q-2$, are unrolled into two line segments, 
  that intersect the same edges in the same order and there are no other geodesic points between them. 
  The last segments   $\gamma^1_{2p+2q-1}$ and $\gamma^2_{2p+2q-1}$ intersect the edge  $A_2A_4$, 
  then go within $A_1A_2A_4$ and end at the points  $B^1_{pq}$ and $B^2_{pq}$ respectively
   (see Figure~\ref{convex_hexagon}). 
It follows, that the tetrahedron's vertices $A_4$ and $A_1$ lie inside the spherical lune,
 formed by  two great arcs containing   $B^1_1B^1_{pq}$ and $B^2_1B^2_{pq}$.
We obtained a convex  hexagon $B^1_1A_4B^2_1B^2_{pq}A_1B^1_{pq}$ at the sphere.

\begin{figure}[h]
\columnsep=5pt
\begin{multicols}{2}
\centering{\includegraphics[width=60mm]{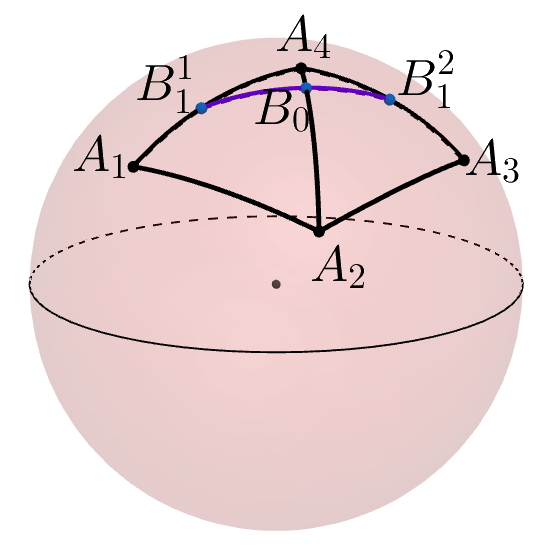}  }
\caption{ }
\label{catching_triangle}
\centering{\includegraphics[width=60mm]{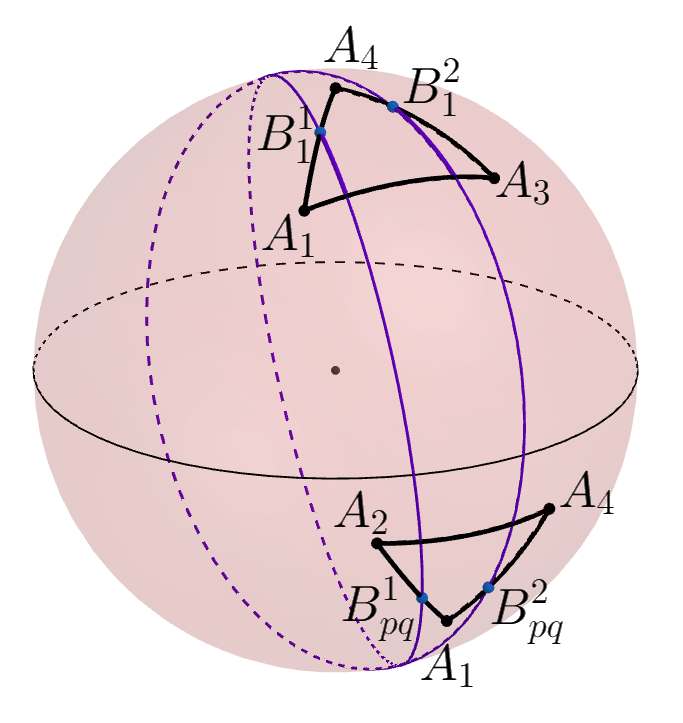} }
\caption{ }
\label{convex_hexagon}
\end{multicols}
\end{figure}

From the inequalities   (\ref{triangl_zacep}) it follows that the length of the geodesic  $\gamma$ 
is less then the perimeter of the hexagon $B^1_1A_4B^2_1B^2_{pq}A_1B^1_{pq}$.
Since the perimeter of convex polygon on a unit sphere is less  than $2\pi$,  
we get, that on a regular tetrahedron in Euclidean space 
with the face's angle  $ \alpha< \pi/2 $ the length of a simple closed geodesic 
is less than $2\pi$.

From Lemma~\ref{morePi2} we know, that if the faces angle $\alpha$ satisfies the condition $\pi/2 \le \alpha< 2\pi/3$,
then on a regular tetrahedron there exist only three simple closed geodesics and they have  type $(0,1)$. 
The length of these geodesics equals
\begin{equation}\label{length_01}
L_{0,1} = 4\arccos \left(  \frac{ \sin \frac{3\alpha}{2} }{2 \sin\frac{\alpha}{2}  }\right).  
\end{equation}
It is easy to check, that $L_{0,1} < 2\pi$, when  $\pi/2 \le \alpha< 2\pi/3$.
\end{proof}

\begin{remark} Lemma \ref{length}  could be considered as the particular case of the result~\cite{Bor2020}
 proved by first author  about the generalization of V. Toponogov theorem~\cite{Toponog63} 
  to the case of two-dimensional Alexandrov space.
\end{remark}

\section{Uniqueness of a simple closed geodesic on a regular tetrahedron in  $\Sp^3$}

In a spherical space the analogical  to Proposition~\ref{centreqv} result is true. 
\begin{lemma}\label{middle}
On a regular tetrahedron in a spherical space a simple closed geodesic 
 intersects  midpoints of two pairs of opposite edges.
\end{lemma}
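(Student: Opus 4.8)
The claim is that any simple closed geodesic $\gamma$ of type $(p,q)$ on a regular spherical tetrahedron passes through the midpoints of two pairs of opposite edges. The plan is to exploit the symmetry already established in the Euclidean setting (Proposition~\ref{corparts2}) together with the rigidity of the spherical tetrahedron (uniqueness up to rigid motion for a given face angle). First I would recall, via Lemma~\ref{length} and Proposition~\ref{gengeodstr}, that $\gamma$ has exactly two catching nodes $B_0$ and $B_{pq}$, sitting on a pair of opposite edges, and that the combinatorial structure of $\gamma$ — which edges it crosses and in what order — is identical to that of the Euclidean geodesic of type $(p,q)$. The goal is to promote this combinatorial symmetry to a metric symmetry: to show $\gamma$ is invariant under the isometry of the tetrahedron that swaps the two catching vertices $A_i$ and $A_j$ (and correspondingly the other pairs of opposite edges).

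The key steps, in order, are: (1) Unroll the tetrahedron along $\gamma$ to obtain a spherical development, as in the proof of Lemma~\ref{length}; the development breaks, by the catching-node structure, into four congruent pieces, and I would show that adjacent pieces are related by the half-turn of $\Sp^2$ about the midpoint of their shared edge — this is the spherical analogue of Proposition~\ref{corparts2}, and it follows from the equality of the four "corner" triangles at the vertices the geodesic winds around, exactly as in the Euclidean argument but now on the sphere. (2) The fixed point of this half-turn forces the geodesic segment crossing that shared edge to meet it at its midpoint; applying this to the two shared edges of the development that correspond to a pair of opposite tetrahedron edges, I get that $\gamma$ crosses each of those at its midpoint. (3) Since the development consists of four congruent quarters and the full deck-transformation group of the development is the Klein four-group generated by these half-turns, one shows that a second pair of opposite edges is also bisected: the composition of two half-turns about the two midpoints is a rotation (a "translation" along the development) whose half-way point is again a fixed midpoint on an edge of the remaining opposite pair. (4) Conclude that $\gamma$ meets two pairs of opposite edges at their midpoints; the third pair carries the catching nodes and is in general not bisected.

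The main obstacle I expect is Step~(1): carefully verifying that the half-turn symmetry of the development survives the passage from Euclidean to spherical geometry. In the Euclidean case the development lies in a flat plane where half-turns are affine and the congruence of the four corner triangles is immediate from the SSS/SAS criteria; on the sphere one must check that the relevant spherical triangles built from the catching triangles $B_1^1A_4B_1^2$, $B_{pq}^1A_1B_{pq}^2$ and the intermediate strips are genuinely congruent, so that the half-turn of $\Sp^2$ about the midpoint of a common edge actually maps one quarter-development onto the next. This requires knowing that the two catching triangles are congruent to each other — which should follow from the fact that all four vertices of the tetrahedron are isometric trihedral angles and the geodesic's combinatorial type is symmetric under swapping the catching vertices — and that the "middle" portion of the development between them is symmetric under the same involution. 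Once this congruence is in hand, the existence of the common fixed midpoint and hence the bisection property are formal, and the rest of the argument is a short bookkeeping exercise with the Klein four-group of symmetries of the development.
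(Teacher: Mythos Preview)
Your plan has a genuine circularity at Step~(2). Establishing that the spherical development polygon has a half-turn symmetry about the midpoint $M$ of a shared edge is fine: this is a statement about the polygon alone and follows from the regularity of the tetrahedron together with the combinatorial symmetry of the unrolling sequence. But the half-turn does \emph{not} automatically force the geodesic arc $X_1X'_1$ inside that polygon to pass through~$M$. The half-turn is an isometry of the sphere; it carries the arc $X_1X'_1$ to another great arc $\widehat{X}'_1\widehat{X}_1$ lying inside the same polygon, and these two arcs coincide \emph{if and only if} the geodesic already passes through~$M$. So Step~(2) as stated assumes the conclusion. Your proposed justification---that the two catching triangles are congruent because ``the geodesic's combinatorial type is symmetric under swapping the catching vertices''---is the same circularity in different clothing: combinatorial symmetry of the crossing sequence does not by itself yield metric symmetry of the crossing points. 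And you cannot appeal to uniqueness of the geodesic in its combinatorial class to conclude that the image geodesic equals $\gamma$, since that uniqueness is Corollary~\ref{uniqueness_cor}, which is \emph{derived from} the present lemma.

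The paper closes this gap with two ingredients absent from your plan. After applying the half-turn and obtaining the second arc $\widehat{X}'_1\widehat{X}_1$, it rules out the two ways the arcs could fail to coincide. If they cross at a point $Z_1$ in one half of the development, then by the symmetry they also cross at a point $Z_2$ in the other half; since two distinct great circles meet only in antipodal points, $Z_1$ and $Z_2$ are antipodal, so the geodesic arc $Z_1X_2Z_2$ has length~$\pi$. Re-developing from $X_2$ and repeating the argument shows the complementary arc $Z_2X_1Z_1$ also has length~$\pi$, forcing the total length of $\gamma$ to be $2\pi$---contradicting Lemma~\ref{length}. If instead the two arcs are disjoint, they bound a spherical quadrilateral whose interior angles sum to exactly $2\pi$ (because $\gamma$ is closed and the half-turn is an isometry), hence of zero area by Gauss--Bonnet, which is impossible on the unit sphere. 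Either way one reaches a contradiction, so the arcs coincide and $X_1$, $X_2$ are midpoints. Neither the length bound $<2\pi$ nor the Gauss--Bonnet area argument appears in your outline, and without them the symmetry step does not go through.
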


\begin{proof}
Let $\gamma$ be a simple closed geodesic on a regular tetrahedron  $A_1 A_2 A_3 A_4$ in $\Sp^3$.
In the part  \ref{seclength} we showed, that  there exists a simple closed geodesic  $\widetilde{ \gamma}$ 
on a regular tetrahedron in Euclidean space such that $\widetilde{ \gamma}$  is equivalent to  $\gamma$.
From   Proposition~\ref{centreqv} we  assume  $\widetilde{ \gamma}$ intersects
the midpoints  $\widetilde{X}_1$ and $\widetilde{X}_2$ of the edges $A_1A_2$ and $A_3A_4$  on the tetrahedron.
Denote by  $X_1$ and $X_2$ the vertices of $\gamma$  at the edges $A_1A_2$ and $A_3A_4$ on a spherical tetrahedron
such that $X_1$ and $X_2$ are equivalent to the points  $\widetilde{X}_1$ and $\widetilde{X}_2$.

Consider the development of the tetrahedron along $\gamma$ starting from the point  $X_1$
 on a two-dimensional unite sphere.
 The geodesic  $\gamma$ is unrolled into the line segment  $X_1X'_1$ of length less than $2\pi$  inside the development.
 Denote by $T_1$ and $T_2$ the parts of the development along $X_1X_2$ and $X_2X'_1$ respectively.

On the tetrahedron  $A_1 A_2 A_3 A_4$ mark the midpoints  $M_1$ and $M_2$
of the edges $A_1A_2$ and $A_3A_4$ respectively.
Rotation by the angle $\pi$ over the line $M_1M_2$ is an isometry of the tetrahedron.
It follows that the development of the tetrahedron is centrally symmetric with the center $M_2$.

On the other hand symmetry over $M_2$ replaces $T_1$ and  $T_2$. 
The point  $X'_1$ at the edge $A_1A_2$ of the part  $T_2$ is mapped into the point 
  $\widehat{ X}'_1$  at the edge $A_2A_1$  containing $X_1$ on  $T_1$, and the lengths of
  $A_2X_1$ and $ \widehat{ X}'_1A_1$ are equal. 

The image of the point  $X_1$ on $T_1$ is a point $\widehat{ X}_1$ at the edge $A_1A_2$ on $T_2$.
Since $M_2$ is a midpoint of  $A_3A_4$, then the symmetry maps the point $X_2$ at  $A_3A_4$ to 
the point $\widehat{X}_2$ at the same edge  $A_3A_4$ 
such that the lengths of $A_4X_2$ and $\widehat{X}_2A_3$ are equal.
Thus, the segment $X_1X'_1$ is mapped into the segment $\widehat{ X}'_1\widehat{ X}_1$ inside the development.

\begin{figure}[h]
\begin{multicols}{2}
\hfill
\includegraphics[width=80mm]{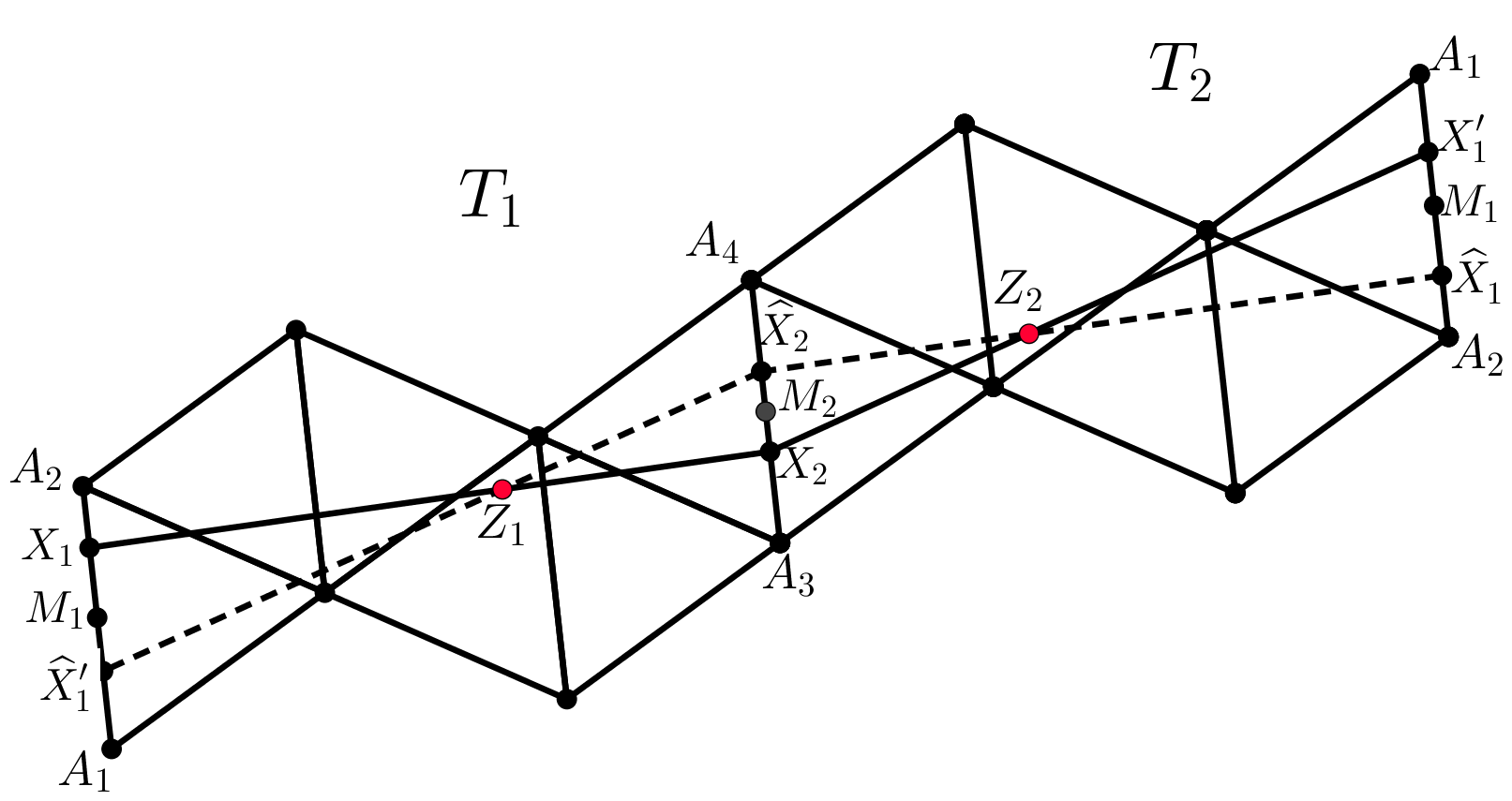}
\caption{ }
\label{uniqueness_p1}
\hfill
\includegraphics[width=80mm]{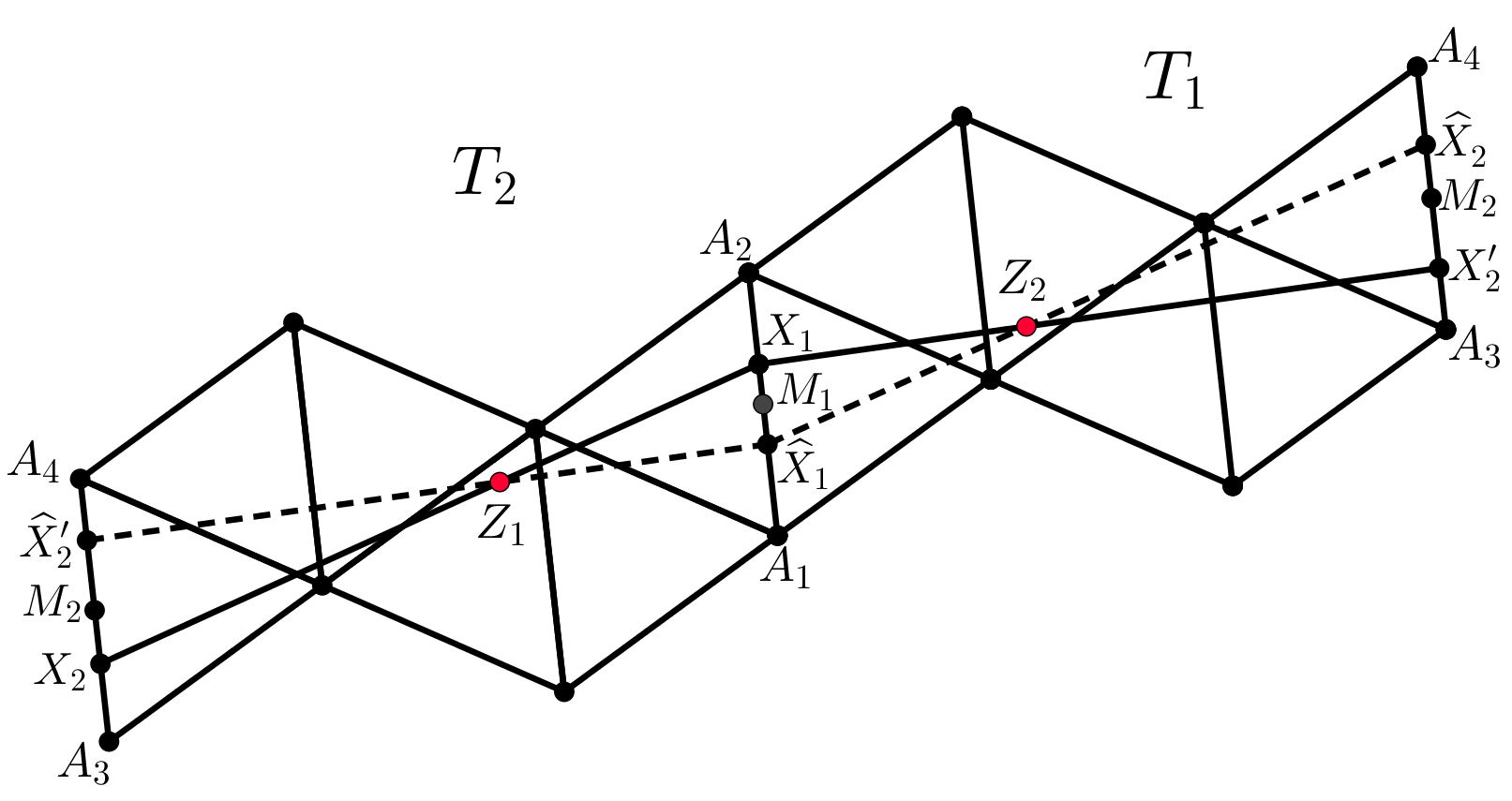}
\caption{ }
\label{uniqueness_p2}
\end{multicols}
\end{figure}

Suppose the segments  $\widehat{X}'_1\widehat{X}_2$  and $X_1X_2$ intersect at the point $Z_1$ inside $T_1$.
Then the segments $\widehat{ X}_2\widehat{ X}_1$ and $X_2X'_1$ intersect at the point $Z_2$ inside $T_2$, 
and the point  $Z_2$  is symmetric to $Z_1$ over $M_2$  (see Figure~\ref{uniqueness_p1}).
We obtain two great arcs $X_1X'_1$ and $\widehat{ X}'_1\widehat{ X}_1$ intersecting in two points.
It follows that $Z_1$ and $Z_2$ are antipodal points on a sphere and
 the length of the geodesic segment $Z_1X_2Z_2$ equals $\pi$.

Now consider the development of the tetrahedron along $\gamma$ starting from the point  $X_2$. 
This development   also consists of spherical polygons $T_2$ and $T_1$, 
but in this case they are glued by the edge $A_1A_2$ and centrally  symmetric with the center $M_1$ 
(see Figure \ref{uniqueness_p2}).

By analogy with previous case apply the symmetry over $M_1$. 
The segment  $X_2X_1X'_2$ is mapped into the segment $\widehat{ X}_2\widehat{ X}_1\widehat{ X}'_2$
inside the development.
Since the symmetries over  $M_1$ and over $M_2$ correspond to the same isometry of the tetrahedron,
then  the arcs $X_2X_1X'_2$ and  $\widehat { X}_2\widehat { X}_1\widehat { X}'_2$ also intersect 
at the points  $Z_1$ and $Z_2$ (see Figure~\ref{uniqueness_p2}).
It follows that the length of geodesic segment  $Z_1X_1Z_2$ is also equal $\pi$.
Hence the length of the geodesic $\gamma$ on a regular tetrahedron in spherical space equals  $2\pi$,
that  contradicts to   Lemma~\ref{length}.
We get that the segments $\widehat{ X}'_1\widehat{ X}_2$ and $X_1X_2$ either don't intersect or coincide
at the development part $T_1$.

 \begin{figure}[h!]
\begin{center}
\includegraphics[width=90mm]{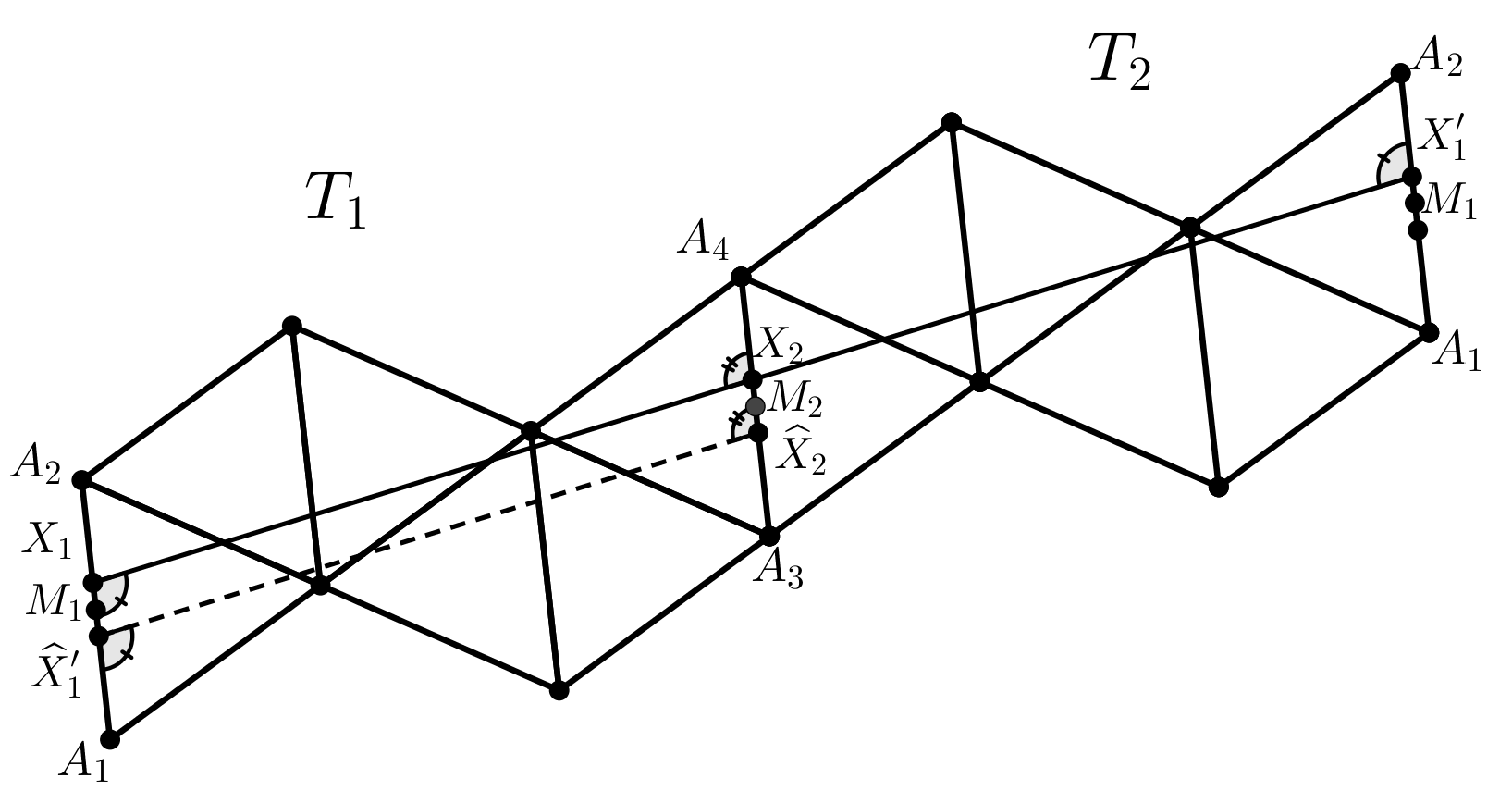}
\caption{ }
\label{uniqueness}
\end{center}
\end{figure}

If the $X_1X_2$ and  $\widehat{ X}'_1\widehat{ X}_2$ at the development don't intersect,
 then they form the  quadrilateral $X_1X_2\widehat {X}_2\widehat { X}'_1$  on the unite sphere.
 Since $\gamma$ is closed geodesic, then $\angle A_1X_1X_2+\angle A_2\widehat { X}'_1\widehat { X}'_2 = \pi$.
 Furthermore, $\angle X_1X_2A_3+ \angle\widehat {  X}'_1\widehat { X}_2A_4 =\pi$.
 We obtain the convex  quadrilateral on a sphere with  the sum of inner angles  $2\pi$ 
 (see Figure \ref{uniqueness}).
It follows that the integral of the Gaussian curvature over the interior of $X_1X_2\widehat {X}_2\widehat { X}'_1$
on a sphere is equal zero.
Hence, the segments $X_1X_2$ and   $\widehat { X}'_1\widehat { X}_2$ coincide
 under the symmetry of the development. 
 We obtain that the points $X_1$ and $X_2$ of geodesic $\gamma$  are the midpoints 
  of the edges $A_1A_2$ and $A_3A_4$ respectively.
  
  Similarly it can be proved that  $\gamma$ intersect the midpoints 
  of the second pair of the opposite edges of the tetrahedron. 
  \end{proof}

\begin{cor}\label{uniqueness_cor}
If two simple closed geodesic on a regular tetrahedron in spherical space intersect
 the edges of the tetrahedron in the same order, then they coincide. 
\end{cor}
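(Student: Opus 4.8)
The plan is to reduce the coincidence of two equivalent geodesics to Lemma \ref{middle}, which has already pinned down where a geodesic must cross the edges once we know its combinatorial type. First I would recall that, by the discussion in Section \ref{seclength} (via Proposition \ref{allgeod}), every simple closed geodesic $\gamma$ on a regular tetrahedron in $\Sp^3$ is determined by the pair of coprime integers $(p,q)$ that record how many vertices it has on each pair of opposite edges, and that crossing the edges in the same order is exactly the same data as having the same type $(p,q)$. So if $\gamma$ and $\gamma'$ intersect the edges in the same cyclic order, they are of the same type $(p,q)$.

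Next I would invoke Lemma \ref{middle}: each of $\gamma$ and $\gamma'$ passes through the midpoints of two pairs of opposite edges. Since they intersect the same edges in the same order, they single out the \emph{same} two pairs of opposite edges — say they both pass through the midpoints $M_1,M_1'$ of one opposite pair and $M_2,M_2'$ of another. Develop the tetrahedron along $\gamma$ starting from $M_1$ onto the two-dimensional unit sphere exactly as in the proof of Lemma \ref{middle}; then $\gamma$ unrolls to a great-circle arc issuing from $M_1$ and, because the development is centrally symmetric about the image of $M_2$ and $\gamma$ passes through $M_2$, this arc is completely determined: it is the arc through the (fixed) developed images of $M_1$ and $M_2$. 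The same development applies to $\gamma'$, since $\gamma'$ crosses the same edges in the same order, so the development glues the same spherical triangles in the same way; and $\gamma'$ unrolls to the great-circle arc through the same two points $M_1$ and $M_2$. Two distinct points on a sphere that are not antipodal determine a unique great-circle arc of length $<2\pi$, and the length bound from Lemma \ref{length} guarantees we are below $2\pi$ and the two midpoints are not antipodal (otherwise the arc $M_1M_2$ alone would have length $\pi$, forcing $\gamma$ to have length $2\pi$, contradicting Lemma \ref{length}). Hence the two developed arcs coincide segment by segment, so $\gamma$ and $\gamma'$ coincide on the tetrahedron.

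The main obstacle I anticipate is the bookkeeping needed to be sure that ``same edges in the same order'' really does force the two developments onto the sphere to be literally the same sequence of spherical triangles glued along the same edges, so that the two geodesic arcs live in the same spherical polygon and can be compared directly; this is where one leans on Proposition \ref{gengeodstr} and the structural statement from Section \ref{seclength} that the combinatorics of the unrolling is dictated entirely by $(p,q)$. Once that identification of developments is in place, the uniqueness of the great-circle arc through two fixed non-antipodal points finishes the argument with no further computation. A clean way to phrase the conclusion: the map sending a geodesic to its unrolled arc from $M_1$ is injective on geodesics of a fixed type because the arc's endpoints are forced to be the two midpoint-images, and these determine the arc uniquely; combined with Lemma \ref{middle} this yields the corollary.
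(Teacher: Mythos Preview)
Your approach is correct and is precisely the intended derivation: the paper states Corollary~\ref{uniqueness_cor} without proof, as an immediate consequence of Lemma~\ref{middle}, and your argument spells out that implication in the natural way---two geodesics with the same edge-crossing sequence share the same spherical development, and by Lemma~\ref{middle} both unroll to great-circle arcs through the same pair of midpoints, which (being non-antipodal by the length bound of Lemma~\ref{length}) determine the arc uniquely.

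One small sharpening you might make when writing it up: rather than worrying about the two arcs between $M_1$ and $M_2$, note (as in the proof of Lemma~\ref{middle}) that the development is centrally symmetric about the image of $M_2$, so the arc $X_1X_2$ has exactly half the length of the full geodesic and hence length ${}<\pi$; this makes the non-antipodality of $M_1$ and $M_2$ automatic and singles out the short arc without further discussion. Your identification of the ``bookkeeping'' obstacle is apt, and your resolution---that the development polygon depends only on the ordered list of edges crossed---is exactly right.
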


\section{Lower bound of the length of simple closed geodesic on a regular tetrahedron in  $\Sp^3$}
 
 \begin{lemma}\label{length_below}
 On a regular tetrahedron with the faces angle  $\alpha$ in spherical space 
 the length of a simple closed geodesic of type  $(p,q)$ satisfies the inequality
\begin{equation}\label{estim_below}
L_{p,q} > 2 \sqrt{p^2+pq+q^2} \frac{\sqrt{4\sin^2\frac{\alpha}{2} - 1}}{\sin\frac{\alpha}{2}}.
\end{equation}
\end{lemma}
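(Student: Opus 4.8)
The plan is to compare $\gamma$ with a closed broken line on a \emph{Euclidean} regular tetrahedron whose edge length equals the length of a chord of an edge of the spherical one, and to estimate the latter from below by the translation realised by one turn of its development.

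I would realise $\Sp^3$ as the unit sphere of $\mathbb{R}^4$, so that $A_1,\dots,A_4$ are unit vectors with pairwise inner products $\cos a$; their convex hull is a regular Euclidean tetrahedron $\widehat T$ with edge length
\[
d=\|A_i-A_j\|=2\sin\tfrac a2=\frac{\sqrt{4\sin^2\frac\alpha2-1}}{\sin\frac\alpha2},
\]
the last equality following from \eqref{a} (since $1-\cos\alpha=2\sin^2\frac\alpha2$, $2\sin^2\frac a2=1-\cos a=\frac{1-2\cos\alpha}{1-\cos\alpha}$ and $1-2\cos\alpha=4\sin^2\frac\alpha2-1$). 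By the structure established in the proof of Lemma~\ref{length}, $\gamma$ has a well-defined type $(p,q)$ and meets the edges of the tetrahedron in the same cyclic order as a Euclidean geodesic of type $(p,q)$. Let $P_0,\dots,P_{N-1}$, $N=4(p+q)$, be the consecutive points in which $\gamma$ meets the edges, and let $\sigma_i$ be the arc of $\gamma$ from $P_i$ to $P_{i+1}$, lying inside a face $F_i$. As $\sigma_i$ is a minor great-circle arc inside a face of diameter less than $\pi$, its length equals $d_{\Sp^3}(P_i,P_{i+1})$, which exceeds the chord $\|P_i-P_{i+1}\|$ in $\mathbb{R}^4$.

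Then I would transport $\gamma$ to $\widehat T$ by nearest-point projection: if $\widehat F_i$ is the face of $\widehat T$ corresponding to $F_i$, let $\widehat P_i$ be the point of $\widehat F_i$ nearest to $P_i$. Inspecting the direction in which a great-circle arc bulges away from its chord, one checks that the nearest point of $\widehat F_i$ to a point of an edge-arc of $F_i$ is the (interior) foot of that point on the line carrying the corresponding edge of $\widehat T$ — in particular it does not depend on which of the two faces adjacent to that edge was used — and that $P_i\mapsto\widehat P_i$ preserves the cyclic order of the intersection points. Hence $\widehat\gamma:=\widehat P_0\widehat P_1\cdots\widehat P_{N-1}\widehat P_0$ is a well-defined closed broken line on $\widehat T$ of type $(p,q)$. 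Since nearest-point projection onto the convex set $\widehat F_i$ is $1$-Lipschitz and $P_i,P_{i+1}$ both lie on $F_i$, we get $\|\widehat P_i-\widehat P_{i+1}\|\le\|P_i-P_{i+1}\|$ for every $i$.

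It remains to bound the length of $\widehat\gamma$ from below. As in the Euclidean case of Section~3, the development of a regular Euclidean tetrahedron of edge $d$ along any closed curve of type $(p,q)$ is a part of the standard triangulation of the plane into equilateral triangles of side $d$, and one full turn carries the initial edge of the development onto the terminal one by the translation through the vector $d\,(q+2p,\,q\sqrt3)$; this depends only on the combinatorial type, and for the closed geodesic it recovers the length \eqref{length_evcl_geod}. Therefore the development of $\widehat\gamma$ is a planar polygonal arc joining two points at distance $2d\sqrt{p^2+pq+q^2}$, so $\sum_i\|\widehat P_i-\widehat P_{i+1}\|\ge 2d\sqrt{p^2+pq+q^2}$, and altogether
\[
L_{p,q}=\sum_i|\sigma_i|>\sum_i\|P_i-P_{i+1}\|\ge\sum_i\|\widehat P_i-\widehat P_{i+1}\|\ge 2d\sqrt{p^2+pq+q^2}=2\sqrt{p^2+pq+q^2}\,\frac{\sqrt{4\sin^2\frac\alpha2-1}}{\sin\frac\alpha2},
\]
which is \eqref{estim_below}. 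The main obstacle is the verification above concerning the nearest-point projection: one has to use the concrete geometry of a spherical equilateral face to guarantee that each $P_i$ is sent into the interior of the correct edge of $\widehat T$, so that $\widehat\gamma$ really is a closed curve of type $(p,q)$ and the translation statement applies to it without change; the remaining ingredients are either elementary (arc versus chord, the $1$-Lipschitz property of metric projection onto a convex set) or a rescaling of facts already proved in Section~3.
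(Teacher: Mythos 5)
Your proof is correct, and it reaches the same comparison object as the paper --- the Euclidean tetrahedron spanned by the chords of the edges, whose edge length $2\sin\frac{a}{2}$ equals $\widetilde a$ from \eqref{tilde_a} --- but by a genuinely different route. The paper applies, face by face, the central projection from the sphere's centre onto the plane through the three vertices of the face; this turns $\gamma$ into a generalized geodesic $\widetilde\gamma$ on the chordal tetrahedron, and the final bound comes from Proposition~\ref{allgeod} (existence of an equivalent genuine closed geodesic) plus the straight-versus-broken comparison on the development. You instead argue extrinsically (arc versus chord in $\mathbb{R}^4$, then the $1$-Lipschitz metric projection onto the convex chordal faces) and close with the translation vector of the development directly. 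Your middle step is arguably cleaner: the paper's claim that an arc of length $2\phi$ projects to a segment of length $2\sin\phi$ is exact only when both endpoints lie on the circumscribed circle of the face, whereas your two length inequalities hold unconditionally. The step you leave as ``one checks'' does go through: writing a point of an edge arc as $P=v/|v|$ with $v=tA_j+(1-t)A_k$ and $|v|\le 1$, its orthogonal projection onto the plane of the face is the chord point $v$ dilated from the circumcentre by the factor $1/|v|\ge 1$, hence it lies outside the chordal triangle across the edge $[A_jA_k]$ and inside the strip cut out by the perpendiculars to that edge at its endpoints; consequently the nearest point of either adjacent chordal face to $P$ is the interior foot of $P$ on the chord, independent of which face is used, and it varies monotonically along the arc. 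With that verified, every link in your chain of inequalities is sound, and the strictness of the arc-versus-chord step yields the strict inequality \eqref{estim_below}.
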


\begin{proof}

Consider a regular tetrahedron $A_1 A_2 A_3 A_4$ in $\Sp^3$ with the faces angle  
 $\alpha$ and $\gamma$  is a simple closed geodesic  of type  $(p,q)$ on it.
 
 Each of the faces of this tetrahedron is a regular spherical triangle.
 Consider a two-dimensional unit sphere containing the face $A_1A_2A_3$.
Construct the Euclidean plane $\Pi$ passing through the points  $A_1$, $A_2$ and $A_3$.
 The intersection of the sphere with $\Pi$  is a small circle. 
A ray starting at the sphere’s center $O$ and going through a point at the triangle $A_1A_2A_3$ 
  intersects the plane $\Pi$. 
So we get the geodesic map between the sphere and the plane  $\Pi$.
 The image of the spherical triangle $A_1A_2A_3$ is the triangle   $\bigtriangleup A_1A_2A_3$ 
 at the Euclidean plane  $\Pi$.
The edges of $\bigtriangleup A_1A_2A_3$  are the chords joining the vertices of the spherical triangle. 
From (\ref{a}) it follows that the length  $\widetilde{a}$ of the plane triangle’s edge equals
\begin{equation}\label{tilde_a}
\widetilde {a }= \frac{  \sqrt{ 4\sin^2\frac{\alpha}{2} - 1 }  }{\sin \frac{\alpha}{2}  }.
\end{equation}
The segments of the geodesic $\gamma$ lying inside  $A_1A_2A_3$ are mapped into
 the straight line segments inside  $\bigtriangleup  A_1A_2A_3$ (see Figure \ref{projection_inside}).

 \begin{figure}[h]
\begin{center}
\includegraphics[width=65mm]{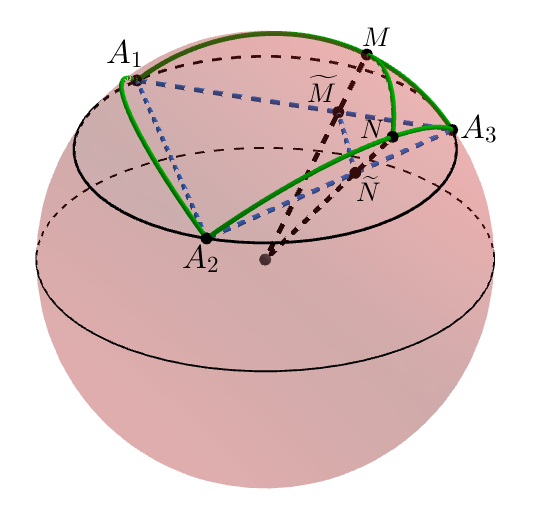}
\caption{ }
\label{projection_inside}
\end{center}
\end{figure}

In the similar way the other tetrahedron faces $A_2A_3A_4$, $A_2A_4A_1$ and $ A_1A_4A_3$ are mapped into 
the plane triangles $\bigtriangleup A_2A_3A_4$, 
$\bigtriangleup A_2A_4A_1$ and  $\bigtriangleup A_1A_4A_3$ respectively. 
 Since the spherical tetrahedron is regular, the constructed plane triangles are equal. 
 We can glue them together identifying the edges with the same labels. 
 Hence we obtain the regular tetrahedron in the Euclidean space. 
 Since the segments of $\gamma$ are mapped into the straight line segments within the plane triangles, 
 then they form the generalized  geodesic $\widetilde{ \gamma}$  
 on the regular Euclidean tetrahedron. 
Furthermore   $\widetilde{ \gamma}$ is equivalent to $\gamma$, so
 $\widetilde{ \gamma}$   passes through the tetrahedron’s edges
 at the same order as the simple closed geodesic of type $(p,q)$.  

Let us show that the length of $\gamma$ is greater than the length of $\widetilde{ \gamma}$.
 Consider an arc $MN$ of the geodesic $\gamma$ within the face  $A_1A_2A_3$.
  The rays $OM$ and $ON$  intersect the plane $\Pi$ at the points $\widetilde{ M}$ and $\widetilde{ N}$  respectively. 
  The   line segment $\widetilde{ M}$ и $\widetilde{ N}$ lying into $ \bigtriangleup  A_1A_2A_3$
 is the image of the arc $MN$ under the geodesic map   (see Figure~\ref{projection_inside}).
 Suppose that the length of the arc $MN$ is equal to $2\phi$, 
 then the length of the segment  $\widetilde{M}\widetilde{N}$  equals $2 \sin \phi$. 
Thus the length of  $\gamma$ on a regular tetrahedron in spherical space is greater than
 the length of its image   $\widetilde{\gamma}$ on a regular tetrahedron in Euclidean space.

From  Proposition~\ref{allgeod} we know that on a regular tetrahedron in  Euclidean space
  there exists a simple closed geodesic $\widehat {\gamma}$ 
   such that $\widehat {\gamma}$ is equivalent to  $\widetilde{ \gamma}$. 
Since on the development of the tetrahedron the geodesic  $\widehat {\gamma}$ is a line segment, and
the generalized  geodesic   $\widetilde{ \gamma}$ is a broken line, 
then the length of  $\widehat {\gamma}$ is less than the length of $\widetilde{\gamma}$.

Hence we get, that on a regular tetrahedron $A_1 A_2 A_3 A_4$ in $\Sp^3$ with the faces angle 
 $\alpha$ the length $L_{p,q}$ of a simple closed geodesic   $\gamma$ of type $(p,q)$  is greater than the length of a
 simple closed geodesic $\widehat {\gamma}$ of type $(p,q)$ on a regular tetrahedron in Euclidean space 
 with the edge length  $\widetilde{ a}$.
 From the equations  (\ref{length_evcl_geod}) and (\ref{tilde_a})  we get, that
\begin{equation} 
L_{p,q} > 2 \sqrt{p^2+pq+q^2} \frac{\sqrt{4\sin^2\frac{\alpha}{2} - 1}}{\sin\frac{\alpha}{2}}. \notag
\end{equation}
  \end{proof}
 
  \begin{theorem}\label{part1}
  On a regular tetrahedron with the faces angle   $\alpha$ in spherical space such that
 \begin{equation}\label{estim_above} 
\alpha > 2\arcsin \sqrt{ \frac{p^2+pq+q^2}{4(p^2+pq+q^2)-\pi^2} }, 
\end{equation}
where $(p,q)$ is a pair of coprime integers, 
there is no simple closed geodesic of type $(p,q)$.
\end{theorem}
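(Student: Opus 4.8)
The plan is to argue by contradiction, pitting the universal upper bound on the length of a simple closed geodesic against the type-dependent lower bound. Suppose a simple closed geodesic $\gamma$ of type $(p,q)$ exists on a regular tetrahedron in $\Sp^3$ with faces angle $\alpha$. Since the regime $\alpha\ge\pi/2$ and the exceptional type $(0,1)$ are already settled by Lemmas~\ref{geod01}--\ref{morePi2}, I may assume $\pi/3<\alpha<\pi/2$ and $p^2+pq+q^2\ge 3$. By Lemma~\ref{length} we have $L_{p,q}<2\pi$, and by Lemma~\ref{length_below} we have
$$
L_{p,q}>2\sqrt{p^2+pq+q^2}\,\frac{\sqrt{4\sin^2\frac{\alpha}{2}-1}}{\sin\frac{\alpha}{2}}.
$$
Chaining these and cancelling the factor $2$ gives
$$
\pi>\sqrt{p^2+pq+q^2}\,\frac{\sqrt{4\sin^2\frac{\alpha}{2}-1}}{\sin\frac{\alpha}{2}},
$$
and the rest of the argument just solves this inequality for $\alpha$.

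Writing $m=p^2+pq+q^2$ and $s=\sin^2\frac{\alpha}{2}$, note that $s\in(1/4,1/2)$ on the range $\pi/3<\alpha<\pi/2$, so $4s-1>0$ and all the square roots are legitimate. Squaring the displayed inequality yields $\pi^2 s>m(4s-1)$, i.e. $m>(4m-\pi^2)s$; since $m\ge 3>\pi^2/4$ the factor $4m-\pi^2$ is positive, so I may divide to obtain $\sin^2\frac{\alpha}{2}<\dfrac{p^2+pq+q^2}{4(p^2+pq+q^2)-\pi^2}$. Because $0<\alpha/2<\pi/4$ and $\sin$ is increasing and positive on that interval, this is equivalent to
$$
\alpha<2\arcsin\sqrt{\frac{p^2+pq+q^2}{4(p^2+pq+q^2)-\pi^2}},
$$
which contradicts the hypothesis (\ref{estim_above}). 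Hence no simple closed geodesic of type $(p,q)$ can exist under (\ref{estim_above}).

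I do not anticipate a real obstacle: the proof is essentially a one-line combination of Lemmas~\ref{length} and~\ref{length_below} followed by elementary manipulation. The two points that deserve explicit mention are the positivity of $4(p^2+pq+q^2)-\pi^2$ — which is what makes the division safe, and which rests on coprimality (so $p^2+pq+q^2\in\{1,3,4,7,\dots\}$) together with the exclusion of the already-handled type $(0,1)$ — and the monotonicity of $\sin$ on $(0,\pi/4)$, needed to pass from the bound on $\sin^2\frac{\alpha}{2}$ to the bound on $\alpha$. Note that for $p^2+pq+q^2=3$ the radicand under the arcsine exceeds $1$, so the hypothesis (\ref{estim_above}) is empty and the theorem is vacuous for type $(1,1)$, in agreement with Lemma~\ref{geod11}.
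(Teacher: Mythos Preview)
Your proof is correct and follows essentially the same route as the paper: combine the upper bound $L_{p,q}<2\pi$ from Lemma~\ref{length} with the lower bound from Lemma~\ref{length_below} and solve the resulting inequality for $\alpha$. You are somewhat more explicit than the paper about the algebraic side conditions (positivity of $4(p^2+pq+q^2)-\pi^2$, the vacuous cases $(0,1)$ and $(1,1)$, and the reduction to $\alpha<\pi/2$), but the argument is the same.
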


 \begin{proof}
From Lemma~\ref{length} and the inequality  (\ref{estim_below}) we get, that if $\alpha$ fulfills the inequality
\begin{equation}\label{alpha_first} 
2 \sqrt{p^2+pq+q^2} \frac{\sqrt{4\sin^2\frac{\alpha}{2} - 1}}{\sin\frac{\alpha}{2}} > 2\pi,  
\end{equation} 
then the necessary condition for the existence of a simple closed geodesic of type $(p,q)$
 on a regular tetrahedron with face’s angle  $\alpha$  in spherical space is failed. 
 Hence, after modifying  (\ref{alpha_first}), we get that, if 
 \begin{equation}\label{ } 
\alpha > 2\arcsin \sqrt{ \frac{p^2+pq+q^2}{4(p^2+pq+q^2)-\pi^2} },  \notag
\end{equation}
  then there is no simple closed geodesics of type $(p,q)$ 
  on the tetrahedron with face’s angle  $\alpha$  in spherical space. 
   \end{proof}

 \begin{cor}\label{cor_th_1}  
 On a regular tetrahedron in spherical space there exist a finite number of simple closed geodesics.
\end{cor}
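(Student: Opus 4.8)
The plan is to combine Theorem~\ref{part1} with the structural fact, established in Section~\ref{seclength}, that every simple closed geodesic on a regular spherical tetrahedron carries a well-defined type. I would first dispose of the range $\alpha\in[\pi/2,2\pi/3)$ by Lemma~\ref{morePi2}, which already asserts that there are exactly three simple closed geodesics there. So from now on assume $\pi/3<\alpha<\pi/2$ and let $A_1A_2A_3A_4$ be the (unique up to rigid motion) regular tetrahedron in $\Sp^3$ with this faces angle.

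The key step is a counting argument over admissible types. Recall that every simple closed geodesic $\gamma$ on $A_1A_2A_3A_4$ is equivalent to a simple closed geodesic on a regular Euclidean tetrahedron and is therefore of type $(p,q)$ for a uniquely determined pair of coprime nonnegative integers $(p,q)$; write $n(p,q)=p^2+pq+q^2$. By Theorem~\ref{part1}, if a geodesic of type $(p,q)$ exists on $A_1A_2A_3A_4$, then
\[
\alpha \ \le\ 2\arcsin\sqrt{\frac{n(p,q)}{4\,n(p,q)-\pi^{2}}}\,.
\]
The function $t\mapsto \dfrac{t}{4t-\pi^{2}}$ is strictly decreasing for $t>\pi^{2}/4$ and tends to $1/4$ as $t\to+\infty$, so the right-hand side above is a strictly decreasing function of $n(p,q)$ whose limit is $2\arcsin\tfrac12=\pi/3$. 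Since the fixed angle satisfies $\alpha>\pi/3$, there is an integer $N=N(\alpha)$ such that the displayed inequality fails as soon as $n(p,q)>N$. Hence only those coprime pairs $(p,q)$ with $p^{2}+pq+q^{2}\le N$ can occur as types, and there are only finitely many such pairs.

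It remains to bound the number of simple closed geodesics of a fixed admissible type $(p,q)$ on the one tetrahedron $A_1A_2A_3A_4$. A geodesic of type $(p,q)$ meets the edges of the tetrahedron $4(p+q)$ times in total, so the cyclic word recording the ordered sequence of edges it crosses is one of finitely many words of length $4(p+q)$ over the six-letter alphabet of edges; by Corollary~\ref{uniqueness_cor}, two simple closed geodesics with the same such word coincide. Thus each admissible type contributes only finitely many geodesics, and summing over the finitely many admissible types yields the finiteness claim. The only genuine obstacle is the limit computation in the middle paragraph — verifying that the bound of Theorem~\ref{part1} decreases to exactly $\pi/3$ — but this reduces to the elementary identity $2\arcsin\sqrt{1/4}=\pi/3$ together with monotonicity of the explicit function, and everything else is bookkeeping.
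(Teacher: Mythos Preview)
Your argument is correct and follows the same route as the paper: the core step is the limit
\[
\lim_{p,q\to\infty} 2\arcsin\sqrt{\frac{p^2+pq+q^2}{4(p^2+pq+q^2)-\pi^2}}=2\arcsin\tfrac12=\tfrac{\pi}{3},
\]
which combined with Theorem~\ref{part1} leaves only finitely many admissible types for any fixed $\alpha>\pi/3$. The paper's own proof stops essentially at this limit; you go a bit further by explicitly invoking Lemma~\ref{morePi2} for $\alpha\ge\pi/2$ and by bounding the number of geodesics per type via edge-words and Corollary~\ref{uniqueness_cor}, which the paper leaves implicit --- so your write-up is more complete, but the approach is the same.
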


 \begin{proof}
If the integers  $(p,q)$ go to infinity, then 
\begin{equation}
\lim\limits_{p,q \to\infty } 2\arcsin \sqrt{ \frac{p^2+pq+q^2}{4(p^2+pq+q^2)-\pi^2} } 
= 2\arcsin \frac{1}{2} = \frac{\pi}{3}. \notag
\end{equation}
From the inequality  (\ref{estim_above}) we get, that 
for the large numbers  $(p,q)$ a simple closed geodesic of type $(p,q)$
could exist on a regular tetrahedron with the faces angle $\alpha$ closed to $ \pi/3$ in spherical space.
\end{proof}
 
 The pairs $p=0, q=1$ and $p=1, q=1$ don't satisfy the condition (\ref{estim_above}). 
 Geodesics of this types are considered in Section  \ref{exampl}.

\section{Sufficient condition for the  existence of a simple closed geodesic on a regular tetrahedron in $\Sp^3$}
 
In previous sections we assumed that the Gaussian curvature of the tetrahedron's faces is equal $1$ in 
spherical space. 
In this case  faces of a tetrahedron were regular spherical triangles with  angles $\alpha$
on a unit two-dimensional  sphere.
The length $a$ of the edges was the function depending on $\alpha$ according to  (\ref{a}).
In current section we will assume that the tetrahedron's faces are spherical triangles with the  angle $\alpha$
on a sphere of radius $R = 1/a$.
In this case the length of the tetrahedron edges equals $1$, and the faces curvature is $a^2$.

Since $\alpha >\pi/3$, then we can assume $\alpha =\pi/3 + \varepsilon$, where $\varepsilon > 0$. 
Taking into account Lemma~\ref{morePi2} we also expect $\varepsilon < \pi/6$.
Let us proof some subsidiary results.

  \begin{lemma}\label{estim_a_above}
  The length of the edges of a regular tetrahedron in spherical space of curvature one satisfies inequality 
   \begin{equation}\label{a_above_epsilon}
 a <  \pi  \sqrt{2  \cos \frac{\pi}{12} } \cdot \sqrt{  \varepsilon  },        
 \end{equation}
where $\alpha=\frac{\pi}{3}+\varepsilon$ is a value of  faces angles.  
   \end{lemma}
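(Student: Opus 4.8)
The plan is to obtain an explicit closed form for $\sin^2\frac a2$ as a function of $\varepsilon$ and then overestimate it by elementary inequalities. Starting from $a=\arccos\!\big(\tfrac{\cos\alpha}{1-\cos\alpha}\big)$ in (\ref{a}), I would write $2\sin^2\frac a2=1-\cos a=\dfrac{1-2\cos\alpha}{1-\cos\alpha}$. Substituting $\alpha=\tfrac\pi3+\varepsilon$ and expanding $\cos\!\big(\tfrac\pi3+\varepsilon\big)=\tfrac12\cos\varepsilon-\tfrac{\sqrt3}2\sin\varepsilon$, the numerator becomes $1-2\cos\alpha=1-\cos\varepsilon+\sqrt3\sin\varepsilon=4\sin\frac\varepsilon2\,\sin\!\big(\frac\varepsilon2+\frac\pi3\big)$ (using $1-\cos\varepsilon=2\sin^2\frac\varepsilon2$, $\sin\varepsilon=2\sin\frac\varepsilon2\cos\frac\varepsilon2$, and $\tfrac12\sin\theta+\tfrac{\sqrt3}2\cos\theta=\sin\!\big(\theta+\tfrac\pi3\big)$), while the denominator is simply $1-\cos\alpha=2\sin^2\!\big(\frac\varepsilon2+\frac\pi6\big)$. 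This gives
\[
\sin^2\frac a2=\frac{\sin\frac\varepsilon2\,\sin\!\big(\frac\varepsilon2+\frac\pi3\big)}{\sin^2\!\big(\frac\varepsilon2+\frac\pi6\big)}.
\]

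Next, since $0<\varepsilon<\pi/6$ forces $\alpha<\pi/2$ and hence $0<a<\pi/2$, the convexity of $\arcsin$ on $[0,1]$ gives $\arcsin t\le\tfrac\pi2 t$; applying this to $t=\sin\frac a2$ yields $a=2\arcsin\!\big(\sin\frac a2\big)\le\pi\sin\frac a2$. It then remains to bound the three sines on the range $0<\varepsilon<\pi/6$: $\sin\frac\varepsilon2<\frac\varepsilon2$; the argument $\frac\varepsilon2+\frac\pi3$ lies in $\big(\frac\pi3,\frac{5\pi}{12}\big)\subset\big(0,\frac\pi2\big)$, so $\sin\!\big(\frac\varepsilon2+\frac\pi3\big)<\sin\frac{5\pi}{12}=\cos\frac\pi{12}$; and the argument $\frac\varepsilon2+\frac\pi6$ lies in $\big(\frac\pi6,\frac\pi4\big)$, so $\sin\!\big(\frac\varepsilon2+\frac\pi6\big)>\sin\frac\pi6=\frac12$. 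Combining these,
\[
\sin\frac a2<\frac{\sqrt{\tfrac\varepsilon2\cos\frac\pi{12}}}{1/2}=\sqrt{2\varepsilon\cos\tfrac\pi{12}},
\]
and therefore $a\le\pi\sin\frac a2<\pi\sqrt{2\cos\frac\pi{12}}\cdot\sqrt\varepsilon$, which is exactly (\ref{a_above_epsilon}).

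The estimate $\sin\frac\varepsilon2<\frac\varepsilon2$ is strict for $\varepsilon>0$, so the final inequality is strict without any extra argument. There is no serious obstacle here; the only step that needs a little care is the trigonometric reduction in the first paragraph — collapsing $1-2\cos\alpha$ into the product $4\sin\frac\varepsilon2\sin\!\big(\frac\varepsilon2+\frac\pi3\big)$ and recognizing $1-\cos\alpha=2\sin^2\!\big(\frac\varepsilon2+\frac\pi6\big)$ — together with verifying that the arguments $\frac\varepsilon2+\frac\pi3$ and $\frac\varepsilon2+\frac\pi6$ remain in intervals where $\sin$ is increasing. The bound is intentionally crude; it only has to be sharp enough in flavor near $\alpha=\pi/3$, which it is.
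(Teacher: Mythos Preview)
Your proof is correct and follows essentially the same route as the paper: both arguments express a trigonometric function of $a$ explicitly in $\varepsilon$, bound each factor on $0<\varepsilon<\pi/6$, and then invoke Jordan's inequality $\frac{2}{\pi}x\le\sin x$ to pass from the sine back to $a$. The only cosmetic difference is that the paper works with $\sin a=\dfrac{\sqrt{\sin\frac{\varepsilon}{2}\cos(\frac{\pi}{6}-\frac{\varepsilon}{2})}}{\sin^2(\frac{\pi}{6}+\frac{\varepsilon}{2})}$ while you work with $\sin^2\frac{a}{2}$; since $\cos(\frac{\pi}{6}-\frac{\varepsilon}{2})=\sin(\frac{\varepsilon}{2}+\frac{\pi}{3})$ these are the same identity, and the estimates and final constant coincide.
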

 
\begin{proof} 
From the equality (\ref{a}) it follows, that
\begin{equation}
\sin a = \frac{ \sqrt{4 \sin^2 \frac{\alpha}{2} -1} }{ 2 \sin^2 \frac{\alpha}{2} }. \notag 
\end{equation}
Substituting $\alpha = \frac{\pi}{3} + \varepsilon$ into this equality, we get
\begin{equation}
\sin a = 
\frac{ \sqrt{\sin \frac{\varepsilon}{2} \cos \left( \frac{\pi}{6} - \frac{\varepsilon}{2} \right) } }
{ \sin^2 \left( \frac{\pi}{6} + \frac{\varepsilon}{2} \right) }. \notag 
\end{equation}
Since $\varepsilon < \frac{\pi}{6} $, then
$\cos \left( \frac{\pi}{6} - \frac{\varepsilon}{2} \right) < \cos \frac{\pi}{12} $,
$\sin \left( \frac{\pi}{6} + \frac{\varepsilon}{2} \right) > \sin \frac{\pi}{6} $
and $\sin \frac{\varepsilon}{2} < \frac{\varepsilon}{2}$.
Using this estimations, we obtain
\begin{equation}\label{sin_a}
\sin a < 2 \sqrt{2 \cos \frac{\pi}{12} } \cdot \sqrt{ \varepsilon } . 
\end{equation}
Since $a < \frac{\pi}{2}$, then $\sin a > \frac{2}{\pi} a $. 
Substituting this estimation into (\ref{sin_a}), we get the necessary inequality (\ref{a_above_epsilon}).
\end{proof}

Consider a parametrization of a two-dimensional sphere $S^2$ of radius $R $ in three-dimensional Euclidean case:
  \begin{equation}\label{spher_th2}
 \begin{cases}
x=R \sin \phi  \cos \theta \\
y=R  \sin \phi  \sin \theta \\
z=- R\cos \phi 
 \end{cases},  
 \end{equation}
where $\phi \in [0, \pi]$, $\theta \in [0, 2\pi)$. 
Let the point $P$ have the coordinates $\phi = r/R, \theta = 0$, where $ r/R< \pi/2$,
and the point $X_1$ correspond to  $\phi = 0$.
Apply a central projection of the hemisphere $\phi \in [0,  \pi/2]$, $\theta \in [0, 2\pi)$  
 to the tangent plane at the point $X_1$. 
 
  \begin{lemma}\label{estim_alpha_above}
 The central projection  of the hemisphere of radius $R = 1/a$ to the tangent  plane at the point $X_1$
 maps the angle   $\alpha = \pi/3 + \varepsilon$ with the vertex  $P(R\sin \frac{r}{R}, 0, -R\cos \frac{r}{R})$ 
 at this hemisphere to the angle    $\widehat{\alpha}_r$ on a plane, which satisfies the inequality 
        \begin{equation}\label{bar_alpha-pi_3}
  \Big| \widehat{\alpha}_r-  \frac{\pi}{3} \Big| < \pi \tan^2 \frac{r}{R} + \varepsilon.   
  \end{equation}
 \end{lemma}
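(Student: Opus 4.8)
The plan is to identify the central projection in question with the classical gnomonic projection of the sphere of radius $R$ onto the tangent plane at $X_1$, and then to read off the angle distortion from the differential of this map at the vertex $P$. By \eqref{spher_th2}, the ray from the sphere's centre through the point with spherical coordinates $(\phi,\theta)$, $\phi<\pi/2$, meets the tangent plane $z=-R$ at the point whose planar polar coordinates about $X_1$ are $\bigl(R\tan\phi,\ \theta\bigr)$. This map is a diffeomorphism of the open hemisphere, it carries great circles to straight lines, and it sends the meridian $\theta=0$ — that is, the great circle through $X_1$ and $P$ — to a straight line through the origin, which I take to be the $u$-axis; in particular $P$ goes to the point $\widetilde P=(R\tan(r/R),0)$.

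Next I would compute the differential of the projection at $P$. Since the map commutes with rotations about $X_1$, its principal directions at $P$ are the meridian direction and its orthogonal complement. The meridian is stretched by $\frac{d}{d\rho}\bigl(R\tan(\rho/R)\bigr)\big|_{\rho=r}=\sec^2(r/R)$, while the orthogonal direction, which carries the circle of points at spherical distance $r$ from $X_1$ (of length $2\pi R\sin(r/R)$) onto the planar circle of radius $R\tan(r/R)$ (of length $2\pi R\tan(r/R)$), is stretched by $\tan(r/R)/\sin(r/R)=\sec(r/R)$. Hence a geodesic ray leaving $P$ at angle $\beta$ to the meridian $PX_1$ is carried to a straight ray leaving $\widetilde P$ at angle $\beta'$ to the $u$-axis, with $\tan\beta'=\cos(r/R)\tan\beta$. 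Writing $f$ for the resulting monotone map $\beta\mapsto\beta'$ of the circle of directions, one has $f(0)=0$, $f(\pi)=\pi$, and differentiating $\tan f(t)=\cos(r/R)\tan t$ gives $f'(t)=\dfrac{\cos(r/R)}{1-\sin^2(r/R)\sin^2 t}$, so $\cos(r/R)\le f'(t)\le 1/\cos(r/R)$ and therefore $|f'(t)-1|\le\dfrac{1-\cos(r/R)}{\cos(r/R)}$ for every $t$.

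Finally I would assemble the estimate. Choosing representatives so that the two sides of the angle at $P$ correspond to directions $\beta_1<\beta_2$ with $\beta_2-\beta_1=\alpha$ (legitimate since $\alpha<\pi$), the two sides of the image angle are straight rays, so $\widehat\alpha_r=f(\beta_2)-f(\beta_1)$ and
\[
|\widehat\alpha_r-\alpha|=\Bigl|\int_{\beta_1}^{\beta_2}\bigl(f'(t)-1\bigr)\,dt\Bigr|\le\alpha\,\frac{1-\cos(r/R)}{\cos(r/R)}<\pi\tan^2\frac{r}{R},
\]
where the last inequality uses $\alpha<\pi$ together with the elementary fact that $\tan^2 x-\dfrac{1-\cos x}{\cos x}=\dfrac{1-\cos x}{\cos^2 x}>0$ for $x\in(0,\pi/2)$. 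Since $\alpha=\pi/3+\varepsilon$, the triangle inequality then gives $\bigl|\widehat\alpha_r-\tfrac{\pi}{3}\bigr|\le|\widehat\alpha_r-\alpha|+\varepsilon<\pi\tan^2(r/R)+\varepsilon$, which is \eqref{bar_alpha-pi_3}.

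The step I expect to be the main obstacle is passing from the pointwise distortion $\beta\mapsto\beta'$ to a uniform estimate for $\widehat\alpha_r-\alpha$: the map $f$ is not additive, and $|\beta'-\beta|$ alone vanishes at $\beta=0$ and is largest near $\beta=\pi/2$, so one cannot simply add up side contributions. The orientation of the angle at $P$ relative to the meridian $PX_1$ is arbitrary, and the derivative argument above — showing $f'$ stays in the narrow band $[\cos(r/R),\,1/\cos(r/R)]$ and that its half-width is bounded by $\tan^2(r/R)$ rather than merely by $1-\cos(r/R)$ — is precisely what makes the bound hold uniformly in that orientation.
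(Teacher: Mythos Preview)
Your argument is correct, and it takes a genuinely different route from the paper's. The paper parametrizes the two great circles through $P$ by a pair of planes $\Pi_1,\Pi_2$ through the centre, writes down $\cos\alpha$ and $\cos\widehat\alpha_r$ explicitly in the plane parameters, and shows directly that $|\cos\widehat\alpha_r-\cos\alpha|<\tan^2(r/R)$; it then converts this cosine estimate into $|\widehat\alpha_r-\alpha|<\pi\tan^2(r/R)$ via the product-to-sum identity and the crude lower bound $|\sin((\widehat\alpha_r+\alpha)/2)|>\sin(\pi/6)$. Your approach instead identifies the differential of the gnomonic projection at $P$ as a diagonal linear map with principal stretches $\sec^2(r/R)$ and $\sec(r/R)$, reads off the induced direction map $\tan\beta'=\cos(r/R)\tan\beta$, and controls the angle distortion by the mean value theorem through the uniform bound $|f'(t)-1|\le(1-\cos(r/R))/\cos(r/R)<\tan^2(r/R)$. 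Your method is more conceptual --- it explains the $\tan^2(r/R)$ as essentially the anisotropy $\sec^2/\sec-1$ of the projection --- while the paper's is a straightforward coordinate computation that avoids any discussion of the direction map.

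One small point worth making explicit: your identification $\widehat\alpha_r=f(\beta_2)-f(\beta_1)$ presupposes $f(\beta_2)-f(\beta_1)\le\pi$. This is harmless, since if $\tan^2(r/R)\ge 1$ the bound \eqref{bar_alpha-pi_3} is vacuous, while if $r/R<\pi/4$ then $f'\le\sec(r/R)<\sqrt2$ and, with $\alpha<\pi/2$ (from $\varepsilon<\pi/6$), one gets $f(\beta_2)-f(\beta_1)<\pi/\sqrt2<\pi$. Stating this would close the only visible gap.
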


\begin{proof}
Construct two planes $\Pi_1$ and $\Pi_2$ passing through the center of a hemisphere and 
the point $P(R\sin \frac{r}{R}, 0, -R\cos \frac{r}{R})$:
  \begin{equation}
\Pi_1 : a_1 \cos \frac{r}{R} \: x + \sqrt{1 -a_1^2}\:\: y + a_1 \sin\frac{r}{R} \:  z = 0; \notag
  \end{equation}
    \begin{equation}
 \Pi_2 : a_2 \cos \frac{r}{R}\:  x + \sqrt{1-a_2^2} \:\: y + a_2 \sin\frac{r}{R} \: z = 0, \notag
  \end{equation}
where 
      \begin{equation}\label{a_1a_2}
     |a_1|, |a_2|  \le 1.
        \end{equation}
Let the angle between this two planes   $\Pi_1$ and $\Pi_2$ equals $\alpha$. Then
     \begin{equation} \label{cos_alpha_sphere}
\cos \alpha =   a_1a_2 + \sqrt{  (1 -a_1^2)(1 -a_2^2) }.
  \end{equation}
  
 \begin{figure}[h]
\begin{center}
\includegraphics[width=75mm]{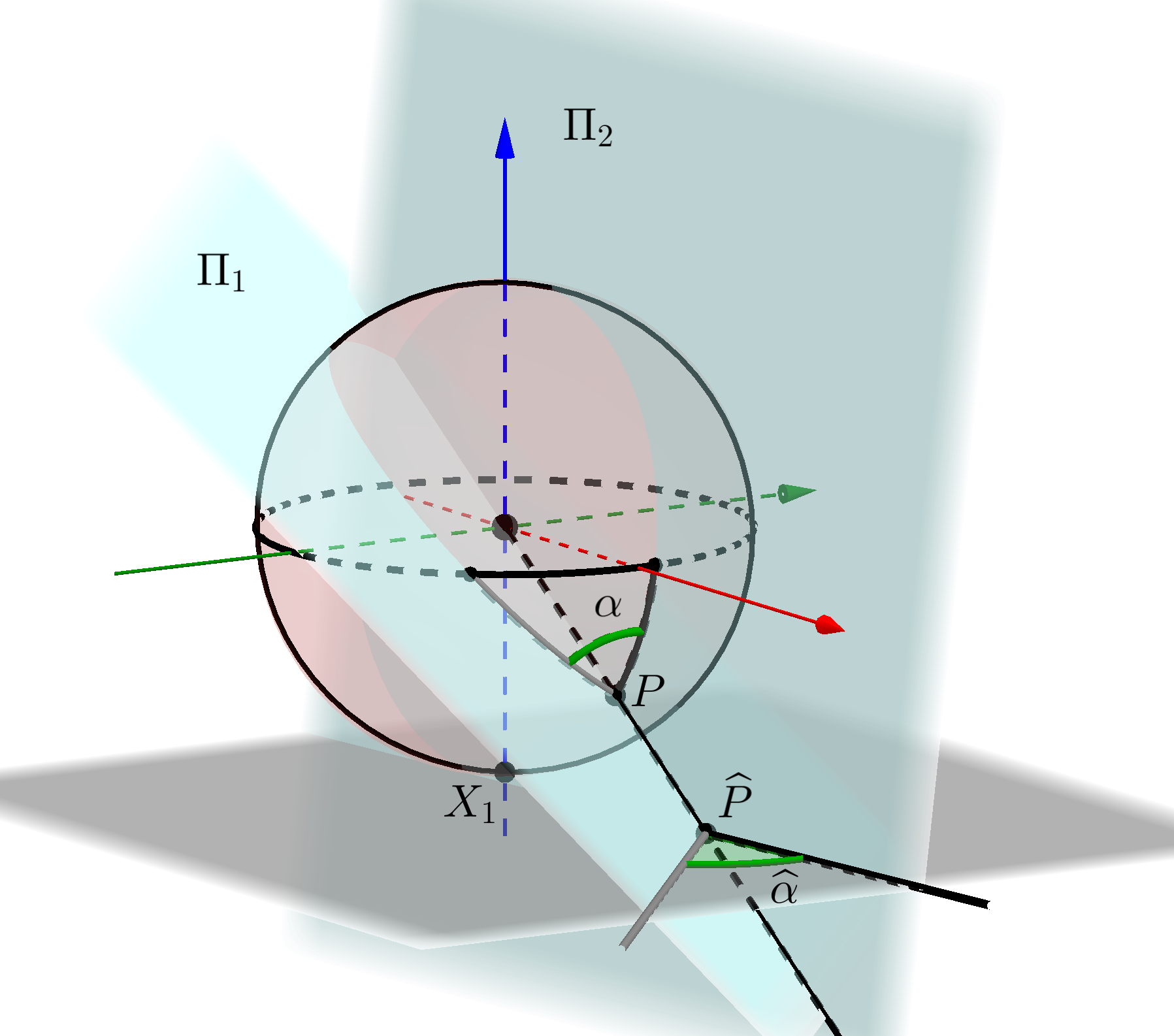}
\caption{ }
\label{angle_projection}
\end{center}
\end{figure}

The equation of the tangent plane at the  $X_1$ to the sphere $S^2$ is equal  $z=-R$.
This tangent plane intersects  the planes  $\Pi_1$ and $\Pi_2$ along the lines, 
which form the angle  $\widehat{\alpha}_r$  (see Figure \ref{angle_projection}),   and
     \begin{equation} \label{cos_alpha_bar}
\cos \widehat{\alpha}_r = \frac{  a_1a_2 \cos^2\frac{r}{R} + \sqrt{ (1 -a_1^2)(1 -a_2^2) }   }
{ \sqrt{1 -a_1^2 \sin^2\frac{r}{R}  }   \sqrt{1 -a_2^2 \sin^2\frac{r}{R}  }    }.
  \end{equation}
From the equations (\ref{cos_alpha_sphere}) and (\ref{cos_alpha_bar}) we get 
     \begin{equation}\label{vspomagat_1}
  | \cos \widehat{\alpha}_r- \cos \alpha | < 
  \frac{ |   a_1a_2 \sin^2\frac{r}{R} | }{ \sqrt{1 -a_1^2 \sin^2\frac{r}{R}  } \sqrt{1 -a_2^2 \sin^2\frac{r}{R}  }  }. 
  \end{equation}
  Applying the inequality (\ref{a_1a_2}) and the estimation (\ref{vspomagat_1}) we get 
\begin{equation}\label{vspomagat_2}
| \cos  \widehat{\alpha}_r - \cos \alpha | < \tan^2 \frac{r}{R}. 
\end{equation}
From the equation
\begin{equation}
| \cos \widehat{\alpha}_r- \cos \alpha | = 
\Big|2 \sin  \frac{ \widehat{\alpha}_r - \alpha}{2} \sin \frac{ \widehat{\alpha}_r + \alpha}{2} \Big|  \notag
 \end{equation}
and inequalities
\begin{equation}
 \left |  \sin  \frac{ \widehat{\alpha}_r+ \alpha}{2}  \right| > \sin \frac{\pi}{6}, 
 \;\;\;
 \left |  \sin  \frac{ \widehat{\alpha}_r- \alpha}{2} \right|
  > \frac{2}{\pi}  \left | \frac{ \widehat{\alpha}_r- \alpha}{2} \right|,  \notag
  \end{equation}
  and since $\alpha > \frac{\pi}{3}$ and $ \widehat{\alpha}_r < \pi $ it follows 
      \begin{equation}\label{bar_alpha_step1} 
     \frac{2}{\pi}  \left | \frac{ \widehat{\alpha}_r - \alpha}{2} \right| < | \cos \widehat{\alpha}_r - \cos \alpha |.  \notag
  \end{equation}
From (\ref{bar_alpha_step1}), (\ref{vspomagat_2}) and equality $\alpha = \frac{\pi}{3} + \varepsilon$ we obtain
\begin{equation}\label{bar_alpha_pi_3_1}
\Big| \widehat{\alpha}_r- \frac{\pi}{3} \Big| < \pi \tan^2 \frac{r}{R} + \varepsilon.  \notag
\end{equation}
 \end{proof}

Let us consider the arc of length one starting at the point  $P$ with the coordinates 
 $\phi =r/R, \theta = 0$, where $r/R< \pi/2 $, on a sphere  (\ref{spher_th2}).
 Apply the central projection of this arc to the  plane $z=-R$, which is tangent to the sphere at the point $X_1 (\phi = 0)$.

  \begin{lemma}\label{estim_l_above}
   The central projection  of the hemisphere of radius $R = 1/a$ to the tangent  plane at the point $X_1$
 maps the arc of the length one starting from the point  $P(R\sin \frac{r}{R}, 0, -R\cos \frac{r}{R})$ 
 into the segment of length  $\widehat{l}_r$ satisfying the inequality 
        \begin{equation}\label{bar_l-1}
\widehat{l}_r-1 < \frac {  \cos \frac{\pi}{12}  \left( 4+  \pi^2  (2r+1)^2  \right) }
{  \left( 1-\frac{2}{\pi}a(r+1) \right)^2 } \cdot \varepsilon.     
  \end{equation}
 \end{lemma}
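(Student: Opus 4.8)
The plan is to parametrize the arc explicitly on the sphere \eqref{spher_th2}, push it forward under the central projection to the tangent plane $z = -R$, compute the Euclidean length $\widehat{l}_r$ of the image segment, and then bound the difference $\widehat{l}_r - 1$ from above in terms of $\varepsilon$ using $R = 1/a$ together with the estimate $a < \pi\sqrt{2\cos\frac{\pi}{12}}\cdot\sqrt{\varepsilon}$ from Lemma~\ref{estim_a_above}. First I would note that an arc of length $1$ starting at $P$ lies at spherical distance between $r/R$ and $(r+1)/R$ from the pole $X_1$ (by the triangle inequality on the sphere, since $P$ is at distance $r/R$ from $X_1$), so every point of the arc has polar angle $\phi \le (r+1)/R < \pi/2$ provided $(r+1)a < \pi/2$, which is exactly the regime where the projection is well-defined and where the denominator $1 - \frac{2}{\pi}a(r+1)$ in \eqref{bar_l-1} is positive.

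Next I would compute the projection map and its length distortion. The central projection sends a point with polar angle $\phi$ to a point at Euclidean distance $R\tan\phi$ from $X_1$ in the tangent plane, and a standard computation of the pullback metric shows that the length of the image of a curve at polar angle $\phi$ making angle $\psi$ with the meridian gets multiplied by a factor bounded above by $\dfrac{1}{\cos^2\phi}$ (radial directions stretch by $\sec^2\phi$, azimuthal by $\sec\phi$). Hence
\begin{equation}\label{length_distortion_bound}
\widehat{l}_r \le \frac{1}{\cos^2\frac{r+1}{R}},
\end{equation}
since the whole arc stays within polar angle $(r+1)/R$. Therefore
\begin{equation}\label{lhat_minus_one}
\widehat{l}_r - 1 \le \frac{1 - \cos^2\frac{r+1}{R}}{\cos^2\frac{r+1}{R}} = \tan^2\frac{r+1}{R}.
\end{equation}
Now I would bound $\tan\frac{r+1}{R}$: writing $t = (r+1)/R = (r+1)a$, we have $t < \pi/2$, and on this range $\tan t < \dfrac{t}{1 - \frac{2}{\pi}t}$ (since $\cos t > 1 - \frac{2}{\pi}t$ by concavity/comparison of $\cos$ with the chord, giving $\sin t < t$ and $\tan t = \sin t / \cos t < t/(1-\frac2\pi t)$ after a short argument). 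This already yields $\widehat{l}_r - 1 < \dfrac{(r+1)^2 a^2}{(1-\frac{2}{\pi}a(r+1))^2}$.

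The last step is to insert $a^2 < 2\pi^2\cos\frac{\pi}{12}\cdot\varepsilon$ from Lemma~\ref{estim_a_above}. This gives
\begin{equation}\label{almost_done}
\widehat{l}_r - 1 < \frac{2\pi^2\cos\frac{\pi}{12}\,(r+1)^2}{\left(1-\frac{2}{\pi}a(r+1)\right)^2}\cdot\varepsilon,
\end{equation}
and since $2(r+1)^2 \le 4 + \pi^2(2r+1)^2$ — indeed $\pi^2(2r+1)^2 \ge 4(2r+1)^2 \ge 2(r+1)^2 + 2$ for $r \ge 0$, so even with the $+4$ the inequality is comfortable — one obtains the stated bound \eqref{bar_l-1}. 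The main obstacle I anticipate is not any single estimate but keeping all the crude inequalities ($\cos t > 1 - \frac2\pi t$, $\sin t < t$, $\sin\frac\varepsilon2 < \frac\varepsilon2$, the polynomial comparison $2(r+1)^2 \le 4 + \pi^2(2r+1)^2$) simultaneously valid on the relevant ranges of $\varepsilon$ and $r$, in particular making sure the denominator stays bounded away from zero; one should note that $\widehat l_r$ being finite already presupposes $a(r+1) < \pi/2$, so this hypothesis is implicit in the statement and the bound is only asserted in that regime.
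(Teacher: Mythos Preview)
Your approach via the pointwise distortion bound $\sec^2\phi$ for the gnomonic projection is correct and yields a valid upper bound for $\widehat l_r$, but it is slightly looser than what the paper obtains, and your final polynomial comparison contains an error that prevents the argument from reaching the stated constant when $r=0$.

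Concretely: from your penultimate display you need $2\pi^2(r+1)^2 \le 4+\pi^2(2r+1)^2$, not $2(r+1)^2 \le 4+\pi^2(2r+1)^2$ as you wrote (you dropped a factor of $\pi^2$ on the left). The corrected inequality is equivalent to $\pi^2(1-2r^2) \le 4$, which fails at $r=0$ (it reads $\pi^2 \le 4$). So your bound does not imply \eqref{bar_l-1} in that case, though it does for all $r\ge 1$.

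The paper avoids this by computing the maximum length of the projected chord directly rather than integrating a pointwise distortion. Since the unit arc is a great-circle segment, its image under the central projection is a straight segment $\widehat P\widehat Q$, and maximizing $|\widehat P\widehat Q|$ over all $Q$ at spherical distance $1$ from $P$ (via Lagrange multipliers) shows the worst case is the radial arc going away from the pole, with $Q$ at polar angle $(r+1)/R$. This gives
\[
\widehat l_r \le \frac{R\sin\frac1R}{\cos\frac rR\cos\frac{r+1}{R}}
\le \frac{1}{\cos(ar)\cos\big(a(r+1)\big)}.
\]
The product-to-sum identity then gives $1-\cos(ar)\cos\big(a(r+1)\big) = \tfrac12\big[(1-\cos a)+(1-\cos(a(2r+1)))\big]$, and bounding the two terms separately via \eqref{sin_a} and \eqref{a_above_epsilon} produces exactly the numerator $4+\pi^2(2r+1)^2$. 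The gain over your estimate is precisely one factor of $\cos(ar)$ in place of a second $\cos\big(a(r+1)\big)$; at $r=0$ this saves roughly a factor of $2$ and is what makes the constant in \eqref{bar_l-1} attainable.
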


\begin{proof}
The point $P(R\sin \frac{r}{R}, 0, -R\cos \frac{r}{R})$  on the sphere  $S^2$ is projected
 to the point  $\widehat{P}( R \tan \frac{r}{R}, 0, -R)$ on the tangent plane $z=-R$.

Take the point $Q(R a_1, R a_2, R a_3)$ on a sphere such that the spherical distance $PQ$ equals~$1$.
Then $\angle POQ = 1/R $, where $O $ is a center of the sphere $S^2$ (see Figure~\ref{length_projection}).
Thus we get the following conditions for the constants $a_1, a_2, a_3$:
  \begin{equation}\label{cond_1}
  a_1 \sin\frac{r}{R} - a_3 \cos \frac{r}{R} = \cos \frac{1}{R};
  \end{equation}
   \begin{equation}\label{cond_2}
  a_1^2+a_2^2+a_3^2 = 1.
  \end{equation}

The central projection into the plane $z=-R$ maps the point $Q $ to the point\\
$\widehat{Q} \left(- \frac{a_1}{a_3} R, - \frac{a_2}{a_3} R, -R \right)$.
The length of $\widehat{P}\widehat{Q}$ equals
\begin{equation}\label{P'Q'}
|\widehat{P}\widehat{Q}| =R \sqrt{ \left( \frac{a_1}{a_3} - \tan \frac{r}{R} \right)^2 + \frac{a^2_2}{a_3^2} }
\end{equation}

Applying the method of Lagrange multipliers for finding the local extremum of the length  $\widehat{P}\widehat{Q}$,
we get, that the minimum of $|\widehat{P}\widehat{Q}| $ reaches when 
 $Q$ has the coordinates $\left( R\sin\frac{r-1}{R}, 0, R\cos\frac{r-1}{R} \right)$. Then
   \begin{equation}\label{ }
 |\widehat{P}\widehat{Q}|_{min} = R     \left|  \tan \frac{r}{R}  - \tan \frac{r-1}{R}  \right|  =  
   \frac {R \sin\frac{1}{R} }{\cos\frac{r}{R} \cos \frac{r-1}{R}}.   \notag
  \end{equation}
 Note  that $ |\widehat{P}\widehat{Q}|_{min}  > 1$. 
 
 \begin{figure}[h]
\begin{center}
\includegraphics[width=80mm]{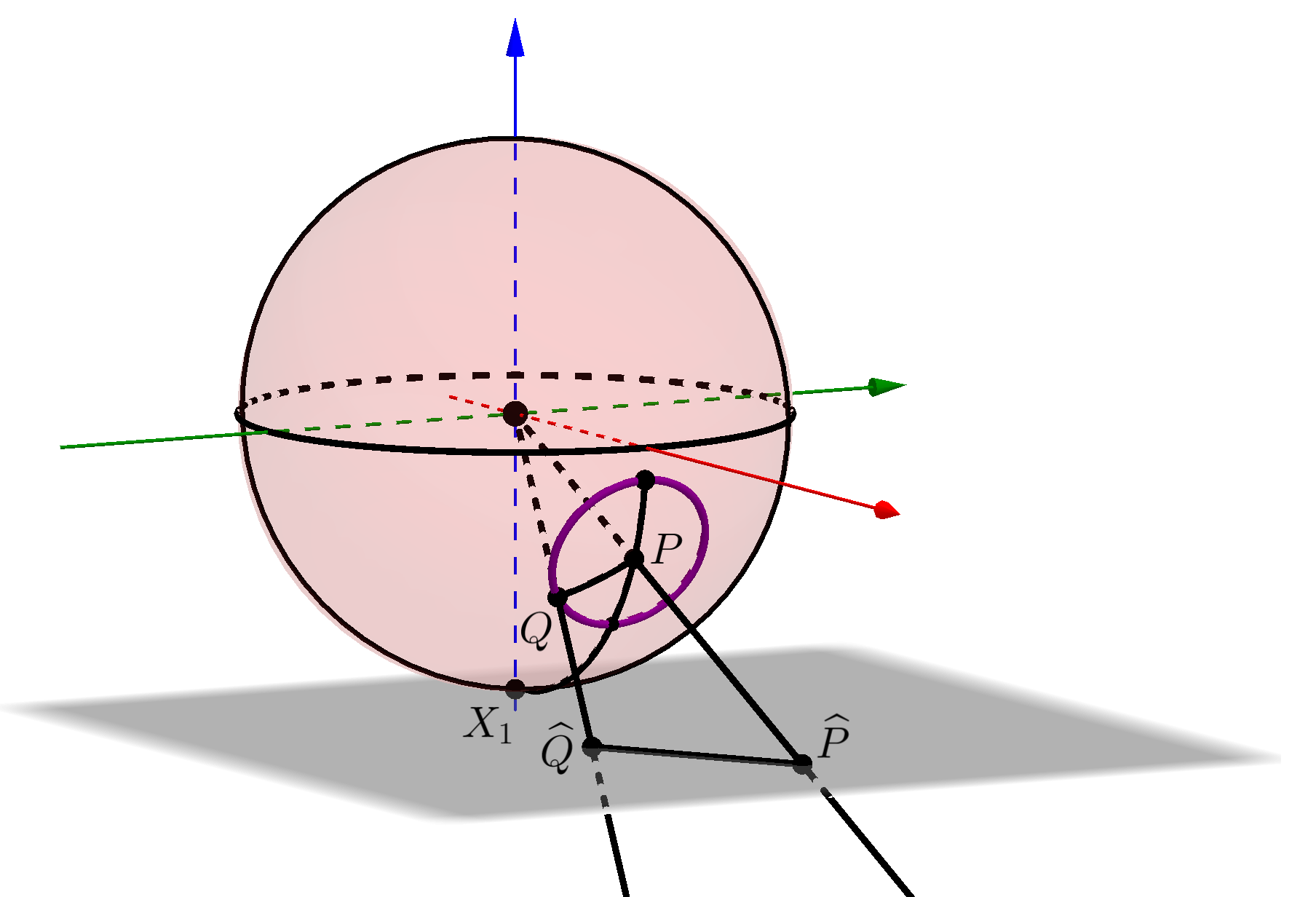}
\caption{ }
\label{length_projection}
\end{center}
\end{figure}

  The maximum of $|\widehat{P}\widehat{Q}| $ reaches at the point $Q\left(R\sin\frac{r+1}{R}, 0, R\cos\frac{r+1}{R} \right)$. 
  This maximum value equals 
   \begin{equation}\label{ }
 |\widehat{P}\widehat{Q}|_{max} = R     \left|  \tan \frac{r}{R}  - \tan \frac{r+1}{R}  \right|  =  
   \frac {R \sin\frac{1}{R} }{ \cos\frac{r}{R} \cos \frac{r+1}{R}  }.   \notag
  \end{equation}
Since $R =1/a$, then the length $\widehat{l}_r$ of the projection of $PQ$ satisfies 
\begin{equation}\label{bar_l}
\widehat{l}_r< \frac { \sin a }{ a   \cos (a r) \cos \big( a (r+1)\big) }. \notag
\end{equation}
Applying the estimation   $\sin a<a$,  we obtain 
\begin{equation}\label{vspom_bar_l}
\widehat{l}_r-1 < \frac { 2 -  \cos a  - \cos   \big( a(2r+1) \big) }{2   \cos (a r) \cos \big( a (r+1)\big)  }. 
\end{equation}
From the equation (\ref{sin_a}) it follows 
\begin{equation}\label{vsp_1}
1 - \cos a = \frac{\sin^2a}{1+ \cos a} \le 8 \cos \frac{\pi}{12}\; \varepsilon.
\end{equation}
In the same way from the inequality  (\ref{a_above_epsilon}) we obtain 
\begin{equation}\label{vsp_2}
1- \cos \left( a (2r+1) \right) \le 2\pi^2 \cos \frac{\pi}{12} (2r+1)^2 \; \varepsilon;
\end{equation}   
Estimate the denominator of the   (\ref{vspom_bar_l}), using the inequality
 $\cos x > 1-\frac{2}{\pi}x$ where $x <\pi/2$.
Applying the inequalities  (\ref{vsp_1}) и (\ref{vsp_2}), we get
\begin{equation}\label{ }
\widehat{l}_r -1 < \frac {   4 \cos \frac{\pi}{12} + \pi^2 \cos \frac{\pi}{12} (2r+1)^2}
{    \left( 1- \frac{2}{\pi} a \left( r+1 \right) \right)^2  } \cdot \varepsilon.  \notag 
 \end{equation}
   \end{proof}

 \begin{theorem}
Let $(p,q)$ be a pair of coprime integers, $0\le p <q$,  and let  $ \varepsilon$ satisfy
    \begin{equation}\label{bar_h_1} 
     \varepsilon  <  \min \left\{
      \frac{\sqrt{3}}{4  c_0\sqrt{p^2+q^2+pq}\;
 \sum_{i=0}^{\left[ \frac{p+q}{2} \right]+2}   \left( c_l (i) + \sum_{j=0}^i  c_\alpha (j) \right)};
    \frac{1}{8  \cos \frac{\pi}{12}(p+q)^2 }
  \right\},    
 \end{equation}
where
   \begin{equation}\label{  }
 c_0=   \frac{ 
 3 - \frac{(p+q+2)}{\pi  \cos \frac{\pi}{12}  (p+q)^2 } - 
   16 \sum_{i=0}^{\left[ \frac{p+q}{2} \right]+2}  \tan^2 \left( \frac{\pi i}{2 (p+q) } \right) }
  {1-  \frac{(p+q+2)}{2\pi  \cos \frac{\pi}{12}  (p+q)^2 } -
  8 \sum_{i=0}^{\left[ \frac{p+q}{2} \right]+2}  \tan^2 \left( \frac{\pi i}{2 (p+q) } \right)  }, \notag
 \end{equation} 
     \begin{equation}\label{ }
 c_l (i) =\frac { \cos \frac{\pi}{12} (p+q)^2  \left( 4+  \pi^2  (2i+1)^2  \right) }{ \left( p+q-i-1 \right)^2 }, \notag
 \end{equation}
    \begin{equation}\label{ }
 c_\alpha (j)  =4   \left(   8 \pi     (p+q)^2 \cos \frac{\pi}{12}  \tan^2 \frac{\pi j}{2 (p+q) }  + 1 \right). \notag
 \end{equation}
 Then on a regular tetrahedron in spherical space with the faces angle $\alpha= \pi/3+\varepsilon$
there exists a unique, up to the rigid motion of the tetrahedron, simple closed geodesic of type  $(p,q)$.
 \end{theorem}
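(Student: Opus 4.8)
The plan is to show the existence part by a continuity/degree argument in the development picture, and then to deduce uniqueness from Corollary~\ref{uniqueness_cor}. The starting point is the fact, established in Sections~\ref{seclength}--\ref{seclength}, that a simple closed geodesic of type $(p,q)$ on the spherical tetrahedron is equivalent to the corresponding geodesic on the Euclidean tetrahedron and therefore has the same combinatorial structure: it has two catching triangles at two opposite tetrahedron vertices, and in between it runs as a family of paired segments. I would work with the model of Section~8 in which the faces are spherical triangles with angle $\alpha=\pi/3+\varepsilon$ on a sphere of radius $R=1/a$, so that the edge length is exactly $1$; Lemma~\ref{estim_a_above} then controls $a$ (hence $R$) in terms of $\varepsilon$, and as $\varepsilon\to 0$ the faces flatten to the unit-edge Euclidean equilateral triangle. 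The idea is that for $\varepsilon$ small the spherical development is a small perturbation of the Euclidean development, so the Euclidean simple closed geodesic of type $(p,q)$ through the midpoints (Proposition~\ref{centreqv}) persists.

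Concretely, I would develop the spherical tetrahedron along the prospective geodesic onto the round sphere, starting from a point $X$ on an edge. The development is a chain of at most $\left[\frac{p+q}{2}\right]+\text{const}$ spherical triangles glued along edges, each congruent and each close (within $O(\varepsilon)$) to the flat equilateral triangle. One then uses central projection from the sphere's center onto the tangent plane at $X$ to compare the spherical development with the flat one: Lemma~\ref{estim_alpha_above} bounds the distortion of each gluing angle by $\pi\tan^2\frac rR+\varepsilon$, and Lemma~\ref{estim_l_above} bounds the distortion of each projected unit edge by the quantity appearing in $c_l(i)$. Summing these errors over the $O(p+q)$ faces of the development — this is exactly what the sum $\sum_{i=0}^{[\frac{p+q}{2}]+2}\bigl(c_l(i)+\sum_{j=0}^i c_\alpha(j)\bigr)$ in~(\ref{bar_h_1}) is bookkeeping — gives that the spherical development, transported to the tangent plane, differs from the Euclidean development by at most $c_0\sqrt{p^2+pq+q^2}\cdot(\text{that sum})\cdot\varepsilon$ in the relevant metric sense. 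By Lemma~\ref{evcl_dist_vertex_lem} the Euclidean midpoint geodesic of type $(p,q)$ stays at distance at least $\frac{\sqrt3}{4\sqrt{p^2+pq+q^2}}$ from every tetrahedron vertex; so the first term in the minimum in~(\ref{bar_h_1}) is chosen precisely so that the perturbed (spherical) straight segment from $X$ still misses all vertices of the development and still re-enters the tetrahedron correctly, i.e. the two catching triangles genuinely close up. The second term, $\frac{1}{8\cos\frac{\pi}{12}(p+q)^2}$, together with Lemma~\ref{estim_a_above}, keeps $a(p+q)$ safely below $\pi/2$ so that all the $\cos(ar)$-type denominators in Lemmas~\ref{estim_alpha_above}--\ref{estim_l_above} and in $c_0$, $c_l$ stay positive; this is what makes the whole development fit inside a hemisphere and makes the central-projection estimates valid.

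For the existence itself I would set up a one-parameter (or two-parameter) family: let $X=X(\mu)$ move along an edge and let the initial direction be determined by requiring the developed broken path to return to the opposite edge in the combinatorial pattern of type $(p,q)$; the "closing-up" condition is that the endpoint lands at the correct edge with the geodesic angle matching, equivalently that a certain continuous displacement function $F(\mu)$ vanishes. At $\varepsilon=0$ this $F$ has a transversal (simple) zero at the midpoint, by the Euclidean Proposition~\ref{centreqv} and the explicit linear structure of the Euclidean development (the coordinates computed in Section~3 show $F$ is affine with nonzero slope there). Because all the perturbation estimates above are uniform and $C^1$-small in $\varepsilon$ on the relevant compact parameter range, the implicit function theorem (or a simple intermediate-value argument, since $F$ is monotone to leading order) produces a zero $\mu(\varepsilon)$ for all $\varepsilon$ satisfying~(\ref{bar_h_1}); the corresponding developed segment has length $<2\pi$ (it is within $O(\varepsilon)$ of the Euclidean length $2\sqrt{p^2+pq+q^2}<2\pi$, after rescaling by the small edge factor it is in fact much shorter than $2\pi$), so by Lemma~\ref{length}'s proof machinery it is an honest simple closed geodesic; and by construction it is of type $(p,q)$. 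Uniqueness is then immediate: any two simple closed geodesics of type $(p,q)$ cross the edges in the same cyclic order, so Corollary~\ref{uniqueness_cor} forces them to coincide, and Lemma~\ref{middle} identifies the geodesic as the one through the two pairs of midpoints, which pins it down up to the rigid motions of the tetrahedron permuting the three such pairs.

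The main obstacle I expect is the bookkeeping of the accumulated error along the development and, in particular, keeping every denominator in $c_0$, $c_l(i)$, $c_\alpha(j)$ positive: one must simultaneously bound the distortion of angles (Lemma~\ref{estim_alpha_above}, contributing the $\tan^2(\pi i/2(p+q))$ terms) and of lengths (Lemma~\ref{estim_l_above}, contributing the $(p+q-i-1)^{-2}$ terms, which blow up as $i$ approaches $p+q$ — this is why the development is truncated at $\left[\frac{p+q}{2}\right]+2$ faces and the rest is recovered by the central symmetry of Proposition~\ref{corparts2}). Balancing these against the vertex-distance lower bound of Lemma~\ref{evcl_dist_vertex_lem}, while staying within the hemisphere, is the delicate quantitative core of the argument; everything else is continuity plus the uniqueness corollary.
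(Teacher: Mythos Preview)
Your perturbation framework --- central projection of the spherical development to the tangent plane, accumulation of angle and length distortions via Lemmas~\ref{estim_alpha_above} and~\ref{estim_l_above}, balancing against the Euclidean vertex-distance buffer of Lemma~\ref{evcl_dist_vertex_lem}, and the hemisphere condition $a(p+q)<\pi/2$ --- is exactly the machinery the paper uses, and your description of the bookkeeping is accurate. The uniqueness part via Corollary~\ref{uniqueness_cor} is also the paper's.

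Where you diverge is the \emph{existence mechanism}, and there is a real gap. You propose to solve a closing-up equation $F(\mu)=0$ by the implicit function theorem, claiming that at $\varepsilon=0$ the function $F$ has a transversal zero at the midpoint. But in the Euclidean limit \emph{every} $\mu\in(0,1)$ gives a simple closed geodesic of type $(p,q)$ (Section~3: ``for each coprime integers $(p,q)$ there exist infinitely many simple closed geodesics of type $(p,q)$, and all of them are parallel in the development''). So any natural closing function is identically zero at $\varepsilon=0$; there is no transversal zero and the IFT does not apply as stated. A degree/intermediate-value argument does not obviously rescue this either, since there is no sign change to exploit.

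The paper sidesteps this degeneracy entirely. It does \emph{not} search for a closing point; instead it observes (via Proposition~\ref{corparts2}) that the spherical development $T_{pq}$ inherits the same central symmetries about the edge-midpoints $Y_1,X_2,Y_2$ as the Euclidean one. Consequently the four great arcs $X_1Y_1$, $Y_1X_2$, $X_2Y_2$, $Y_2X'_1$ joining successive midpoints automatically line up into a \emph{single} great arc on the sphere --- the symmetry forces the angle-matching at each junction, so the candidate is a geodesic by construction, with no equation to solve. The only remaining task is to check that this pre-built arc $X_1Y_1$ stays inside the (non-convex) polygon $T_{pq}$, i.e.\ misses all the vertices; that is precisely what the accumulated-error estimate versus the $\sqrt{3}/(4\sqrt{p^2+pq+q^2})$ buffer accomplishes. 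So the quantitative core you outlined is right, but it is deployed to verify containment of an explicitly constructed arc, not to locate a zero of a closing map.
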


  \begin{proof}
Take the pair of coprime integers $(p,q)$, where  $0< p <q$.
Consider a simple closed geodesic $\widetilde{\gamma}$ of type  $(p,q)$ 
on a regular tetrahedron $\widetilde {A}_1\widetilde{A}_2\widetilde{ A}_3\widetilde{ A}_4$  with 
the  edge of the length one in Euclidean space.
Suppose, that $\widetilde{\gamma}$ passes through the midpoints  
$\widetilde{ X}_1$, $\widetilde{ X}_2$ and $\widetilde{ Y}_1$, $\widetilde{ Y}_2$
of the edges $\widetilde{ A}_1\widetilde{A}_2$ и $\widetilde{ A}_3\widetilde{A}_4$
and $\widetilde{A}_1\widetilde{A}_3$, $\widetilde{A}_4\widetilde{ A}_2$ respectively.

Consider the development  $\widetilde{T}_{pq}$ of the tetrahedron 
along  $\widetilde{ \gamma}$ starting from the point $\widetilde{ X}_1$.
The geodesic unfolds to the segment 
$\widetilde{ X}_1\widetilde{ Y }_1\widetilde{ X}_2\widetilde{ Y}_2  \widetilde{X'_1}$ 
inside the development  $\widetilde{T}_{pq}$.
From Proposition~\ref{corparts2} we know, that the parts of the development along 
geodesic segments $\widetilde{X}_1\widetilde{Y}_1$, $\widetilde{Y}_1\widetilde{X}_2$,
 $\widetilde{X}_2\widetilde{Y}_2$ and $ \widetilde{Y}_2\widetilde{X}'_1$ are equal,
 and any two adjacent polygons can be transformed into each other by a rotation through an angle $\pi$
  around the midpoint of their common edge  (see Figure \ref{geod_23_evcl}).

\begin{figure}[h]
\columnsep=50pt
\begin{multicols}{2}
\hfill
\centering{\includegraphics[width=100mm]{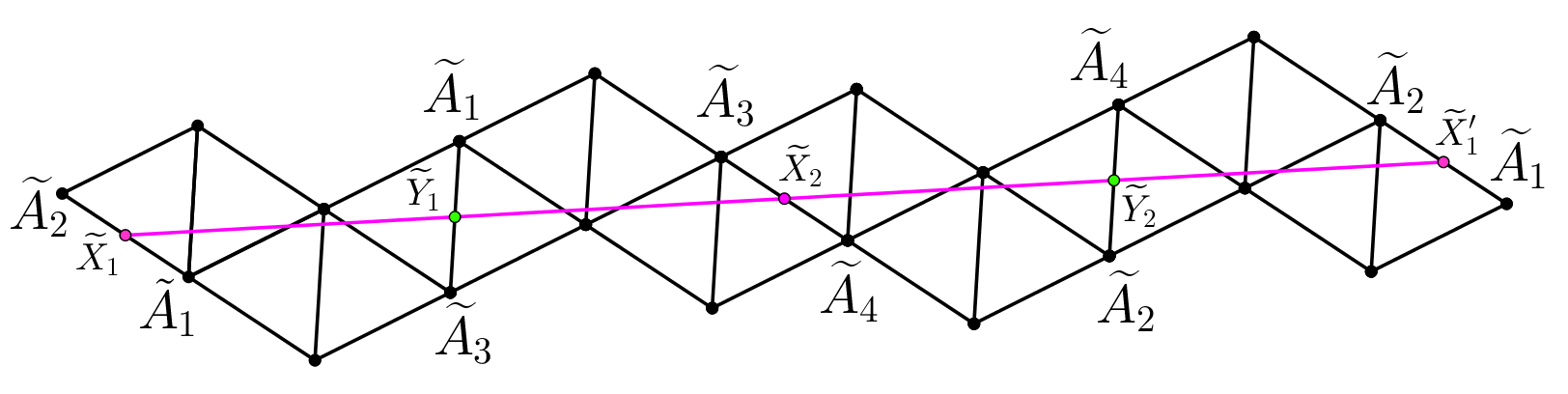}  }
\caption{ }
\label{geod_23_evcl}
\centering{\includegraphics[width=60mm]{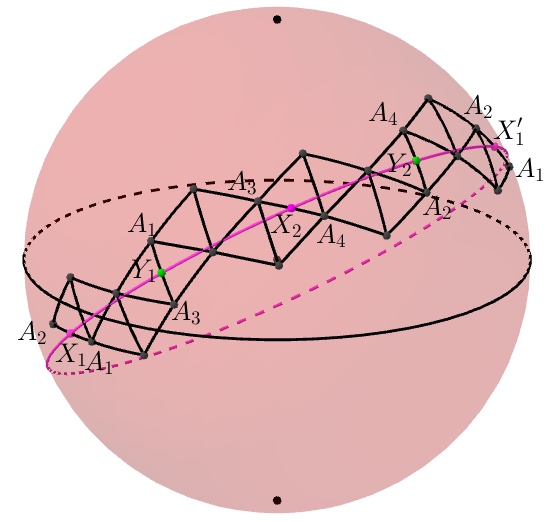} }
\caption{ }
\label{geod_23_spher}
\end{multicols}
\end{figure}

Now consider a two-dimensional sphere $S^2$ of radius $R = 1/a$,
 where  $a$ depends on $\alpha$ according to (\ref{a}).
On this sphere we take the several copies of regular spherical triangles with the angle $\alpha$ at vertices,
 where  $\pi/3 <\alpha <\pi/2$.
Put this triangles up in the same order as we develop the faces of the Euclidean tetrahedron 
along $\widetilde{\gamma}$ into the plane.
In other words, we construct a polygon $T_{pq}$ on a sphere $S^2$
formed by the same sequence of regular triangles as the polygon $\widetilde{T}_{pq}$ on   Euclidean plane.
 Denote the vertices of $T_{pq}$ according to the vertices of $\widetilde{T}_{pq}$.
By the construction  the spherical polygon $T_{pq}$ has the same properties of the central symmetry 
as the Euclidean  $\widetilde{T}_{pq}$.
Since the groups of isometries of   regular tetrahedra in spherical space and in Euclidean space are equal, 
then $T_{pq}$ corresponds to the development of a regular tetrahedron with the faces angle   $\alpha$ in
spherical space.

Denote by $X_1$, $X'_1$ and $X_2$, $Y_1$, $Y_2$ the midpoints of the edges 
$A_1A_2$, $A_3A_4$,  $A_1A_3$,  $A_4A_2$ on $T_{pq}$  such that these midpoints correspond to the points 
 $\widetilde{X}_1$, $\widetilde{X}'_1$ and $\widetilde{X}_2$, $\widetilde{Y}_1$, $\widetilde{Y}_2$ 
 on the Euclidean development  $\widetilde{T}_{pq}$.
Construct the great arcs $ X_1Y_1$, $Y_1X_2$, $ X_2Y_2$ и $ Y_2X'_1$ on a sphere.
From  the  properties of the central symmetry of $T_{pq}$ we obtain that these arcs form the one great arc   $X_1X'_1$
on  $S^2$   (see Figure \ref{geod_23_spher}).
Since the polygon $T_{pq}$ is not convex, 
we want to find  $\alpha$  such that the polygon $T_{pq}$ contains the arc  $X_1Y_1$  inside.
Therefore the whole arc  $X_1X'_1$ will be also contained  inside  $T_{pq}$ and 
 $X_1X'_1$ will correspond  to the  simple closed geodesic of type  $(p,q)$ on a regular tetrahedron with the 
 faces angle  $\alpha$ in spherical space.

In the following we will consider the part of the polygon  $T_{pq}$  along $X_1Y_1$ and 
will also denote it as $T_{pq}$.
This part consists of $p+q$ regular spherical triangles with the edges of length one. 
Then if $a$ satisfies the following inequality
\begin{equation}\label{a_p+q}
a (p+q) < \frac{\pi}{2},
\end{equation}
then the  polygon $T_{pq}$ is contained inside the open hemisphere.
Since $\alpha = \pi/3+\varepsilon$, then from the condition  $(\ref{a_above_epsilon})$ we get that
the estimation (\ref{a_p+q}) holds if
\begin{equation}\label{epsilon_estim_1}
\varepsilon < \frac{1}{8 \cos \frac{\pi}{12}(p+q)^2 }. 
\end{equation}
In this case the length of the arc $X_1Y_1$ is less than  $\pi/2 a$, 
so   $X_1Y_1$ satisfies the necessary condition from Lemma~\ref{length}.

Apply a central projection of the $T_{pq}$ into the tangent plane  $T_{X_1}S^2$ at the point $X_1$ to the sphere $S^2$. 
Since the central projection is a geodesic map, then the image of the spherical polygon  $T_{pq}$ on  $T_{X_1}S^2$
is a polygon  $\widehat{T}_{pq}$. 

Denote by  $\widehat{A}_i$ the vertex of $\widehat{T}_{pq}$, which is an image of the vertex $A_i$ on $T_{pq}$. 
The arc $X_1Y_1$ maps into the line segment $\widehat{X}_1\widehat{Y}_1$  on   $T_{X_1}S^2$, that
joins  the midpoints of the edges  $\widehat{ A}_1\widehat{ A}_2$ и $\widehat{ A}_1\widehat{ A}_3$.
If for some $\alpha$ the segment $\widehat{ X}_1\widehat{ Y }_1$ lies inside the polygon  $\widehat{ T}_{pq}$, then 
the arc $X_1Y_1$ is also containing inside  $T_{pq}$ on the  sphere. 

The vector $\widehat{ X}_1\widehat{Y}_1$ equals
     \begin{equation}\label{X_1_overline_Y_1 } 
\widehat{ X}_1\widehat{Y}_1= \widehat{a_0}+\widehat{a_1}+\dots+\widehat{ a_s}+\widehat{a}_{s+1},
 \end{equation}
 where $\widehat{a_i}$ are the sequential vectors of the $\widehat{T}_{pq}$  boundary,  
 $\widehat{a_0}=\widehat{ X_1}\widehat{ A_2}$,  $\widehat{a}_{s+1} =\widehat{ A_1}\widehat{Y_1}$,
and $s=\left[ \frac{p+q}{2} \right]+1$
  (if we take the boundary of $\widehat{ T}_{pq}$ from the other side of
 $\widehat{ X}_1\widehat{ Y}_1$, then $s=\left[ \frac{p+q}{2} \right]$ )
 (see Figure \ref{dev_compare}).
 
On the other hand  at the Euclidean plane $T_{X_1}S^2$ 
there exists a development $\widetilde{T}_{pq}$ of a regular Euclidean tetrahedron 
  $\widetilde{A}_1\widetilde{  A}_2\widetilde{ A}_3\widetilde{  A}_4$ with the edge of length one 
  along a simple closed geodesic  $\widetilde{\gamma}$.
The development  $\widetilde{T}_{pq}$  is equivalent to $T_{pq}$, and then  it's equivalent to  $\widehat{T}_{pq}$.
  The segment  $ \widetilde{  X}_1\widetilde{ Y}_1$ lies inside  $\widetilde{  T}_{pq}$
  and corresponds to the segment of $\widetilde{\gamma}$.
  
Let the development  $\widetilde{T}_{pq}$ be placed such that 
the point  $ \widetilde{X}_1$  coincides with $\widehat{X}_1 $  of  $\widehat{ T}_{pq}$, 
and the vector  $\widehat{ X}_1\widehat{ A}_2$ has the same direction with $\widetilde{X}_1\widetilde{ A}_2$.
Similarly the vector $\widetilde{ X}_1 \widetilde{Y}_1$ equals
\begin{equation}\label{tilde_X_1_tilde_Y_1} 
\widetilde{ X}_1\widetilde{Y}_1 = \widetilde{a_0}+\widetilde{a_1}+\dots+\widetilde{a_s}+\widetilde{a}_{s+1},
\end{equation}
where  $\widetilde{a_i}$ are  the sequential vectors of the $\widetilde T_{pq}$  boundary,  
   $s=\left[ \frac{p+q}{2} \right]+1$ and 
$\widetilde{a_0}=\widetilde{ X}_1 \widetilde{A}_2$, 
$\widetilde{a}_{s+1}= \widetilde{ A}_1 \widetilde{Y}_1$  (see Figure~\ref{dev_compare}).  

 \begin{figure}[h]
\begin{center}
\includegraphics[width=80mm]{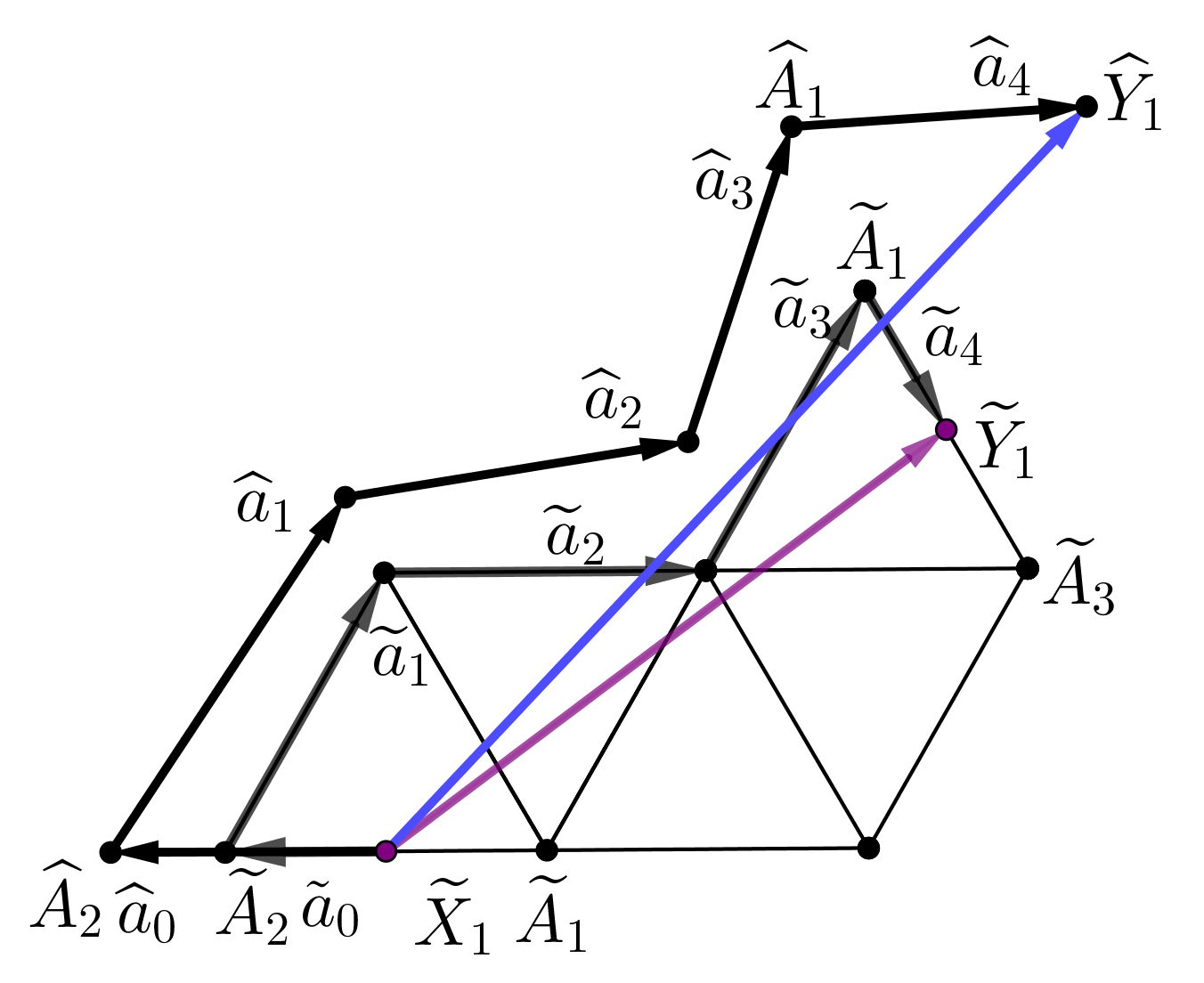}
\caption{ }
\label{dev_compare}
\end{center}
\end{figure}

Suppose the minimal distance from the vertices of $\widetilde{  T}_{pq}$  to the segment
$\widetilde{ X}_1 \widetilde{ Y}_1$ reaches at the vertex $ \widetilde{ A}_k$ and equals $\widetilde{ h}$
from the formula  (\ref{evcl_dist_vertex}).
Let us find the estimation of the distance $\widehat h$  between the segment   $\widehat { X}_1 \widehat {Y }_1$ 
and the corresponding vertex  $\widehat  {A }_k$ on  $\widehat{ T}_{pq}$.
 A geodesic on a regular tetrahedron in Euclidean space intersects at most three edges starting from the same
tetrahedron's vertex.
It follows, that the interior angles of the polygon $ \widetilde{ T}_{pq}$ is not   greater than $4 \frac{\pi}{3}$, 
and the angles of the corresponding vertices on $\widehat{T}_{pq}$  is not   greater than $4 \widehat{\alpha}_i$. 
Applying the formula (\ref{bar_alpha-pi_3}) for  $1\le i \le s$
we get, that the angle between $ \widehat{a_i}$ and $ \widetilde{a_i} $ satisfies the inequality 
 \begin{equation}\label{angle_b_ia_i_1}
\angle(\widehat{a_i}, \widetilde {a_i} ) < \sum_{j=0}^i 4 \left( \pi  \tan^2 \frac{j}{R} + \varepsilon \right).  
 \end{equation} 
Since $R=1/a$, then using  (\ref{a_above_epsilon})  we obtain  
 \begin{equation}\label{tg_r_R}
  \tan \frac{j}{R} <  \tan \left( j \pi  \sqrt{2  \cos \frac{\pi}{12} } \; \sqrt{  \varepsilon  }   \right).   
  \end{equation}
The inequality (\ref{a_p+q}) holds if  the following condition fulfills
   \begin{equation}\label{vspomagat_3}
  \tan \left( j \pi  \sqrt{2  \cos \frac{\pi}{12} } \; \sqrt{  \varepsilon  }   \right) <   \tan \frac{\pi j}{2 (p+q) }.   
  \end{equation}
If $\tan x < \tan x_0$, then $\tan x < \frac{\tan x_0}{x_0} x$. 
From  (\ref{vspomagat_3}) it follows 
 \begin{equation}\label{vspomagat_4}
\tan \left( j \pi  \sqrt{2  \cos \frac{\pi}{12} } \; \sqrt{  \varepsilon  }   \right) <  
 2 (p+q)  \tan   \frac{\pi j}{2 (p+q) }  \sqrt{2 \cos \frac{\pi}{12}} \;  \sqrt{\varepsilon}.  
\end{equation}
Therefore from (\ref{tg_r_R}) and  (\ref{vspomagat_4}) we get  
 \begin{equation}\label{vspomagat_5}
\tan \frac{j}{R} < 2 (p+q)  \tan   \frac{\pi j}{2 (p+q) }  \sqrt{2 \cos \frac{\pi}{12}} \;  \sqrt{\varepsilon}.  
\end{equation}
Using (\ref{angle_b_ia_i_1}) and (\ref{vspomagat_5}) we obtain the final estimation for the angle between 
the vectors $\widehat{a_i}$ and $ \widetilde{a_i}$:
\begin{equation}\label{angle_b_ia_i}
\angle(\widehat{a_i}, \widetilde {a_i} ) < \sum_{j=0}^i  4
  \left(   8 \pi     (p+q)^2 \cos \frac{\pi}{12}  \tan^2 \frac{\pi j}{2 (p+q) }  + 1 \right)  \varepsilon.       
  \end{equation}

\begin{figure}[h]
\columnsep=5pt
\begin{multicols}{2}
\hfill
\centering{\includegraphics[width=60mm]{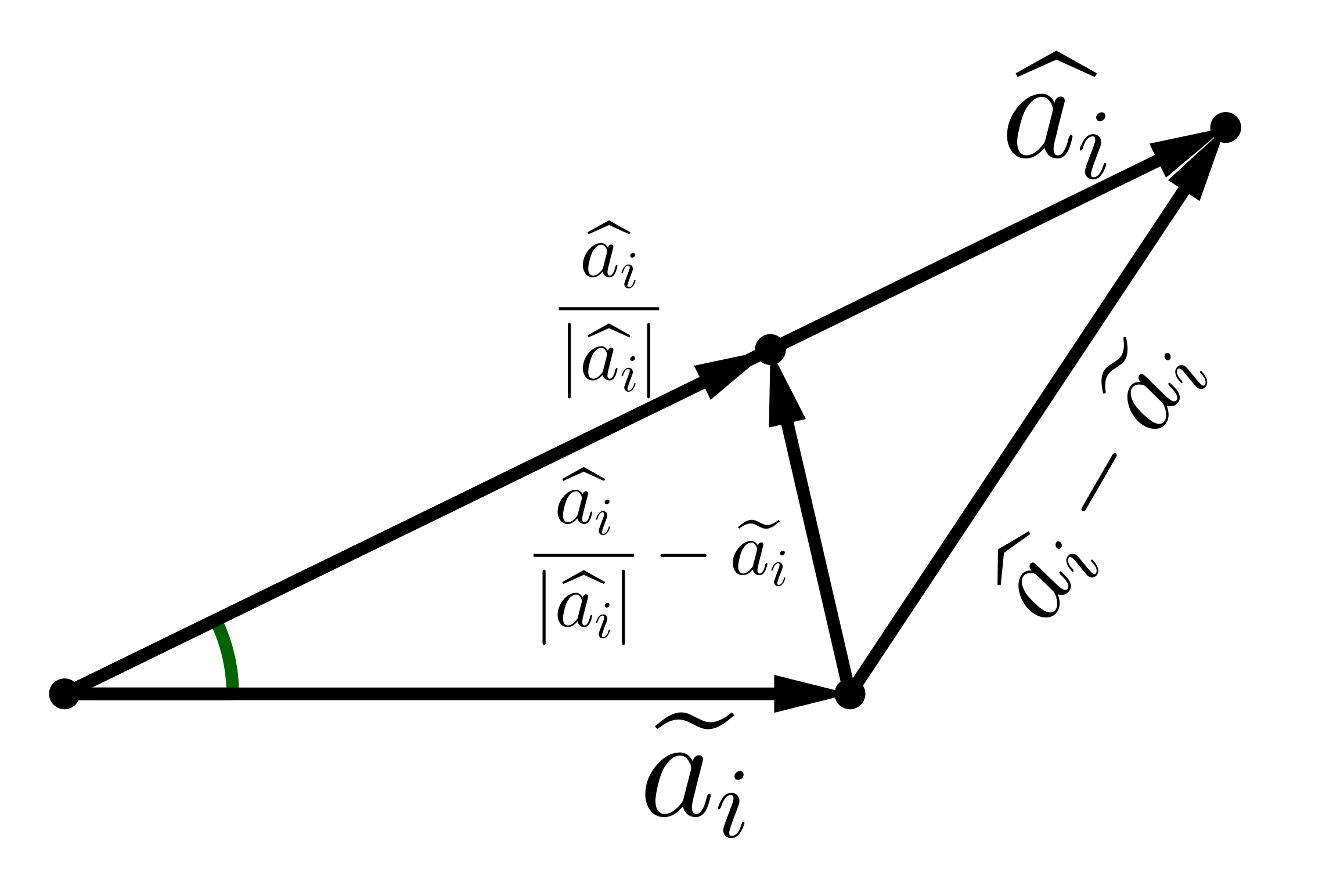}  }
\caption{ }
\label{a_ia_i}
\centering{\includegraphics[width=90mm]{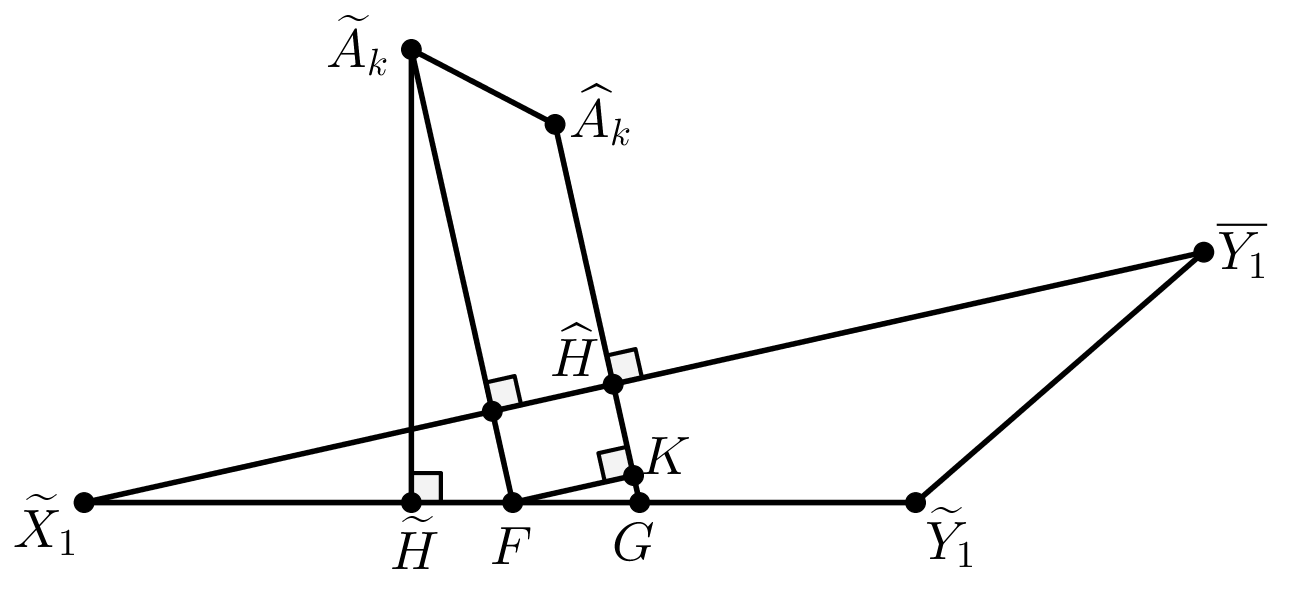} }
\caption{ }
\label{estim_dist}
\end{multicols}
\end{figure}

Now estimate the  length of the vector $ \widehat{a_i} - \widetilde{a_i} $ (see Figure \ref{a_ia_i}).  
The following inequality holds
 \begin{equation}\label{a_i-a_i_1}
| \widehat{a_i} - \widetilde{a_i} | \le 
\left|  \frac{\widehat{a_i}}{|\widehat{a_i}|} - \widetilde{a_i}\right|+ 
\left| \widehat{a_i} - \frac{\widehat{a_i}}{|\widehat{a_i}|} \right|.
 \end{equation} 
Since $\widetilde{a_i}$ is a unite vector, then
\begin{equation}
\left|  \frac{\widehat{a_i}}{|\widehat{a_i}|} - \widetilde{a_i}\right| \le \angle(\widehat{a_i}, \widetilde {a_i} ) 
 \;\;\; \textnormal{и}  \;\;\;
\left|\widehat{a_i} - \frac{\widehat{a_i}}{|\widehat{a_i}|}\right| \le \widehat{ l}_i -1.   
\end{equation} 
From the inequality (\ref{bar_l-1}) we get
  \begin{equation}\label{vspom_a_i_1}
\left|\widehat{a_i} - \frac{\widehat{a_i}}{|\widehat{a_i}|}\right|  < 
\frac {  \cos \frac{\pi}{12}  \left( 4+  \pi^2  (2i+1)^2  \right) }
{  \left( 1-\frac{2}{\pi}a(i+1) \right)^2 } \cdot \varepsilon.  
  \end{equation}
 Using the estimation  (\ref{a_p+q}), we obtain
 \begin{equation}\label{vspom_a_i}
\left|\widehat{a_i} - \frac{\widehat{a_i}}{|\widehat{a_i}|}\right| <
\frac { \cos \frac{\pi}{12} (p+q)^2  \left( 4+  \pi^2  (2i+1)^2  \right) }{ \left( p+q-i-1 \right)^2 } \cdot \varepsilon.  
 \end{equation}
Therefore, from (\ref{a_i-a_i_1}), (\ref{angle_b_ia_i}) and (\ref{vspom_a_i}) we get 
  \begin{equation}\label{a_i-a_i}
| \widehat{a_i} - \widetilde{a_i} | \le \left( c_l (i) + \sum_{j=0}^i  c_\alpha (j) \right) \varepsilon,
 \end{equation}
where
   \begin{equation}\label{c_l}
 c_l (i) =\frac { \cos \frac{\pi}{12} (p+q)^2  \left( 4+  \pi^2  (2i+1)^2  \right) }{ \left( p+q-i-1 \right)^2 },
 \end{equation}
    \begin{equation}\label{c_alpha}
 c_\alpha (j)  =4   \left(   8 \pi     (p+q)^2 \cos \frac{\pi}{12}  \tan^2 \frac{\pi j}{2 (p+q) }  + 1 \right).
 \end{equation}
 Using  (\ref{a_i-a_i}) we estimate the length of $\widehat{ Y}_1 \widetilde{ Y}_1 $ 
 \begin{equation}\label{Y_1Y_1} 
 | \widehat{ Y}_1\widetilde{Y}_1| < 
  \sum_{i=0}^{s+1}  |\widehat{a_i} - \tilde {a_i} | <
 \sum_{i=0}^{s+1}   \left( c_l (i) + \sum_{j=0}^i  c_\alpha (j) \right) \varepsilon. 
 \end{equation}
From  (\ref{angle_b_ia_i}) it follows that the angle $\angle  \widehat{Y}_1\widehat{X }_1\widetilde{Y}_1$  satisfies
 \begin{equation}\label{angle_Y_1X_1Y_1} 
  \angle \widehat{Y}_1\widehat{ X}_1 \widetilde Y_1 < 
 \sum_{i=0}^{s+1}   c_\alpha (i)  \varepsilon.
 \end{equation}
The distance between the vertices $\widehat{A}_k$ and   $\widetilde{ A}_k$  equals
  \begin{equation}\label{A_kA_k} 
 |\widehat{ A}_k\widetilde{A}_k|<
 \sum_{i=0}^k  \left( c_l (i) + \sum_{j=0}^i  c_\alpha (j) \right) \varepsilon. 
 \end{equation}

We drop a perpendicular $\widehat{ A}_k\widehat{ H}$ from the  vertex $\widehat{ A}_k$ 
into the segment  $\widehat{X}_1\widehat{ Y}_1$.
The length of $\widehat{A}_k\widehat{H}$  equals $\widehat{ h }$.
Then we drop the perpendicular $\widetilde{ A}_k\widetilde{ H}$ into the segment $\widetilde{X}_1\widetilde{ Y}_1$ 
and the length of $\widetilde{ A}_k\widetilde{ H}$ equals $\widetilde{ h}$.

Let the point $F$ on $\widetilde{ X}_1\widetilde{Y}_1$ be such that
the segment $\widetilde{ A}_k F$ intersect $\widehat{ X}_1\widehat{ Y}_1$ at right angle.
Then the length of $\widetilde{ A}_k F$ is not less than $ \widetilde{ h }$.
Let the point $G$ on $\widetilde{X}_1\widetilde{ Y}_1$ lie  at the extension 
of the segment $\widehat{ A}_k\widehat{ H}$, and 
  $FK$ is perpendicular to $\widehat{ H} G$  (see Figure~\ref{estim_dist}).
Then the length of $FK$ is not greater than the length of $ \widehat{A}_k\widetilde{A}_k $, 
 and $\angle KFG = \angle\widehat{ Y}_1\widehat{X}_1 \widetilde{ Y}_1$.
From the triangle $GFK$ we get, that
   \begin{equation}\label{FG_1} 
 | FG |= \frac{|FK|}{\cos \angle \widehat{ Y }_1\widehat{ X}_1 \widetilde{ Y}_1  }. 
 \end{equation}
Applying the inequality  $\cos x > 1-\frac{2}{\pi}x$, where $x < \frac{\pi}{2}$,  to (\ref{FG_1}). We obtain
    \begin{equation}\label{FG_2} 
 | FG | < \frac{|\widehat{ A}_k\widetilde{ A}_k |}{ 1-\frac{2}{\pi}  \angle \widehat{ Y}_1\widehat{ X}_1 \widetilde{ Y }_1 }. 
 \end{equation}
So from  (\ref{angle_Y_1X_1Y_1}),  (\ref{A_kA_k}), and (\ref{FG_2})  it follows
   \begin{equation}\label{FG_3} 
   | FG | <  \frac{  \sum_{i=0}^k  \left( c_l (i) + \sum_{j=0}^i  c_\alpha (j) \right) \varepsilon}
  {1- \sum_{i=0}^s 
   \left(   64 \pi     (p+q)^2 \cos \frac{\pi}{12}  \tan^2 \frac{\pi i}{2 (p+q) }  + \frac{8}{\pi} \right) \varepsilon}. 
  \end{equation}
From (\ref{epsilon_estim_1}) and (\ref{FG_3}) we obtain 
       \begin{equation}\label{FG}
   | FG |<  \frac{   \sum_{i=0}^k  \left( c_l (i) + \sum_{j=0}^i  c_\alpha (j) \right) \varepsilon }
  {1-  \frac{(p+q+2)}{2\pi  \cos \frac{\pi}{12}  (p+q)^2 } -8 \sum_{i=0}^{s+1}  \tan^2 \left( \frac{\pi i}{2 (p+q) } \right)  }. 
  \end{equation}

From the our construction it follows
   \begin{equation}\label{vspom_h} 
 \widetilde{ h} \le \widetilde{A}_kF \le  \widehat{ h} + |\widehat{ H} G |+ |\widehat{ A}_k \widetilde {A}_k| + | FG |;  
 \end{equation}
Note, that $|\widehat{ H} G |<   | \widehat{Y}_1\widetilde{ Y}_1 |  $. 
From Lemma~\ref{evcl_dist_vertex_lem} we know, that the distance $ \widetilde{ h} $
satisfies the inequality 
\begin{equation}\label{ }
\widetilde{ h} >   \frac{ \sqrt{3}  }{  4 \sqrt{p^2+q^2+pq} }.  \notag
\end{equation}
Hence from (\ref{vspom_h}) it follows, that
   \begin{equation}\label{bar_h_1} 
      \widehat{ h }    > \frac{ \sqrt{3}  }{  4 \sqrt{p^2+q^2+pq} } -
 | \widehat{Y}_1\widetilde{ Y}_1 |   - 
|\widehat{ A}_k \widetilde {A}_k| - | FG |.  
 \end{equation}
Applying the estimations (\ref{Y_1Y_1}), (\ref{A_kA_k}), (\ref{FG}) 
and the equality $s=\left[ \frac{p+q}{2} \right]+1 $, we get
   \begin{equation}\label{bar_h_1} 
      \widehat{ h }    > \frac{\sqrt{3}}{4 \sqrt{p^2+q^2+pq}  } - c_0
 \sum_{i=0}^{\left[ \frac{p+q}{2} \right]+2}   \left( c_l (i) + \sum_{j=0}^i  c_\alpha (j) \right) \varepsilon,  
 \end{equation}
where  $c_l (i)$ is from  (\ref{c_l}), and $c_\alpha (j)$ is from (\ref{c_alpha}) and
          \begin{equation}\label{c_0 }
 c_0=   \frac{ 
 3 - \frac{(p+q+2)}{\pi  \cos \frac{\pi}{12}  (p+q)^2 } - 
   16 \sum_{i=0}^{\left[ \frac{p+q}{2} \right]+2}  \tan^2 \left( \frac{\pi i}{2 (p+q) } \right) }
  {1-  \frac{(p+q+2)}{2\pi  \cos \frac{\pi}{12}  (p+q)^2 } -
  8 \sum_{i=0}^{\left[ \frac{p+q}{2} \right]+2}  \tan^2 \left( \frac{\pi i}{2 (p+q) } \right)  }, \notag
 \end{equation} 
 From the inequality (\ref{bar_h_1}) we obtain, that if
      $\varepsilon$ satisfies the condition
        \begin{equation}\label{varepsilon} 
     \varepsilon  <   \frac{\sqrt{3}}{4  c_0\sqrt{p^2+q^2+pq}\;
 \sum_{i=0}^{\left[ \frac{p+q}{2} \right]+2}   \left( c_l (i) + \sum_{j=0}^i  c_\alpha (j) \right)} ,   
 \end{equation} 
      then the distance from the vertices of the polygon   $\widehat{ T}_{pq}$ to  $\widehat{ X}_1\widehat{ Y}_1$ is
      greater than zero.
     
     Since we use the estimation  (\ref{epsilon_estim_1}), we get, that if   
    \begin{equation}\label{varepsilon_fin} 
     \varepsilon  <  \min \left\{
      \frac{\sqrt{3}}{4 c_0\sqrt{p^2+q^2+pq}\;
 \sum_{i=0}^{\left[ \frac{p+q}{2} \right]+2}   \left( c_l (i) + \sum_{j=0}^i  c_\alpha (j) \right)};
    \frac{1}{8  \cos \frac{\pi}{12}(p+q)^2 }
  \right\},   
 \end{equation} 
 then the segment  $\widehat{ X}_1\widehat{ Y}_1$ lies inside the polygon  $\widehat{ T}_{pq}$.
 It follows that the arc $X_1Y_1$  on a sphere is also containing inside the polygon $T_{pq}$.
 The arc $X_1Y_1$ corresponds to a simple closed geodesic  $\gamma$ of type $(p,q)$  
 on a regular tetrahedron with the faces angle $\alpha  =\pi/3 + \varepsilon$ in spherical space.
 From Corollary \ref{uniqueness_cor} we get, that this geodesic is unique, up to the rigid motion of the tetrahedron.
 
 Note, that the geodesic  $\gamma$ is invariant under 
  the rotation of the tetrahedron of  the angle $\pi$ over the line passing through 
 the midpoints of the opposite edges of the tetrahedron.
 The rotation of the tetrahedron of the angle  $2\pi/3 $ or $4\pi/3 $  over the line  
 passing through the tetrahedron's  vertex  and the center of its opposite face changes  $\gamma$ into
 another geodesic of  type $(p, q)$.
 
 The rotations over others lines connected other tetrahedron's vertex with the center of its opposite face
 produce the existing geodesics. 
So if  $\varepsilon$ satisfies the condition (\ref{varepsilon_fin}), then
on a regular tetrahedron with the faces angle   $\alpha  = \pi/3 + \varepsilon$  in a spherical space
there exist three different simple closed geodesics of type  $(p, q)$.
Theorem 2 is proved. 
    \end{proof}

B.Verkin Institute for Low Temperature Physics and Engineering 
of the National Academy of Sciences of Ukraine,
Kharkiv, 61103, Ukraine

\textsc {\textit{E-mail address:}} aborisenk@gmail.com, suhdaria0109@gmail.com

\end{document}